\def\br{{\rm{Br}}\e}
\newcommand{\Hom}{{\mathrm{Hom}}}
\DeclareMathAlphabet{\mathbbmsl}{U}{bbm}{m}{sl}
\newcommand{\car}{{\rm char}\,}
\newcommand{\sks}{k_{\lbe\fl}^{\sim}}
\newcommand{\res}{{\rm res}}
\def\sm{\lbe{\rm sm}}
\def\sab{\lbe{\rm sab}}
\def\G{\mathbb{G}}
\newcommand{\ext}{\mathrm{Ext}_{\le\sks}}
\def\s{\mathscr }
\newcommand\sfs{S_{\fl}^{\le\sim}}
\newcommand{\into}{\hookrightarrow}
\newcommand{\onto}{\twoheadrightarrow}
\newcommand{\isoto}{\overset{\!\sim}{\to}}
\def\kbar{\overline{k}}
\newcommand\ck{\mathcal C_{k}}
\newcommand\ak{\mathcal A_{\le k}}
\definecolor{labelkey}{rgb}{1,0,0}
\DeclareMathAlphabet{\mathcalligra}{T1}{calligra}{m}{n}
\numberwithin{equation}{section}
\newcommand{\sh}{\kern -.4em\phantom{a}^{\mathbf{\sim}}}
\newcommand{\lra}{\longrightarrow}
\newcommand{\fl}{\le{\rm fl}}
\newcommand{\et}{{\rm {\acute et}}}
\newcommand{\ab}{{\mathbf{Ab}}}
\newcommand{\red}{{\rm red}}
\def\be{\kern -.1em}
\def\le{\kern 0.03em}
\def\lle{\kern 0.015em}
\def\lbe{\kern -.05em}
\newcommand{\C}{\mathbb C}
\newcommand{\Z}{{\mathbb Z}}
\newcommand{\Q}{{\mathbb Q}}
\newcommand{\N}{{\mathbb N}}
\newcommand{\R}{{\mathbb R}}
\newcommand{\T}{{\mathbb T}}
\newcommand{\spec}{\mathrm{ Spec}\,}
\newcommand{\spf}{\mathrm{ Spf}}
\newcommand{\krn}{\mathrm{Ker}\,}
\newcommand{\img}{\mathrm{Im}\e\le}
\newcommand{\coim}{\mathrm{Coim}\e\le}
\newcommand{\cok}{\mathrm{Coker}\,}
\def\e{\kern 0.08em}
\newcommand{\alb}{{\rm alb}}
\newcommand{\Alb}{{\rm Alb}}
\newcommand{\picom}{{\rm Pic}_{\lbe X/k}^{\le \sm,\e 0}}
\def\cbs{C^{\e b}\lbe(\lbe S_{\fl})}
\def\dbs{D^{\e b}\lbe(\lbe S_{\fl})}
\def\dbk{D^{\le b}\lbe(k_{\fl})}
\def\cbk{C^{\e b}\lbe(k_{\fl})}
\def\pic{{\rm{Pic}}\e}
\definecolor{labelkey}{rgb}{1,0,0}
\DeclareMathAlphabet{\mathcalligra}{T1}{calligra}{m}{n}
\numberwithin{equation}{section}
\newcommand{\shom}{{\mathscr H\! o\le m}_{\le S_{\fl}^{\lle\sim}}\lbe}
\newcommand{\shomk}{{\mathscr H\! o\le m}_{\le k_{\fl}^{\lle\sim}}\lbe}
\newcommand{\exts}{{\mathscr E\be x\le t}_{\lbe S_{\fl}^{\lle\sim}}}
\newcommand{\extk}{{\mathscr E\be x\le t}_{\lbe k_{\lbe\fl}^{\lle\sim}}}
\newcommand{\extc}{{\mathscr E\be x\le t}_{ C^{\lle b}(\lbe k_{\lbe\fl} \lbe)}}
\def\be{\kern -.1em}
\def\le{\kern 0.055em}
\def\lle{\kern 0.015em}
\def\lbe{\kern -.03em}
\newcommand{\gak}{\G_{\lbe a,\le k}}
\newcommand*{\defeq}{\mathrel{\vcenter{\baselineskip0.5ex \lineskiplimit0pt
			\hbox{\scriptsize.}\hbox{\scriptsize.}}}%
	=}
\newcommand{\ksep}{k^{\le\rm s}}
\newtheorem{lemma}{Lemma}[section]
\newtheorem{theorem}[lemma]{Theorem}
\newtheorem{proposition-definition}[lemma]{Proposition-Definition}
\newtheorem{theorem-definition}[lemma]{Theorem\le-Definition}
\newtheorem{corollary}[lemma]{Corollary}
\newtheorem{proposition}[lemma]{Proposition}
\theoremstyle{definition}
\newtheorem{definition}[lemma]{Definition}
\newtheorem{lemma-definition}[lemma]{Lemma-Definition}
\theoremstyle{remark}
\newtheorem{remark}[lemma]{Remark}
\newtheorem{remarks}[lemma]{Remarks}
\newtheorem{example}[lemma]{Example}
\begin{document}

\input xy     %%for diagrams
\xyoption{all} %%

\title[Local duality theorems for commutative algebraic groups]{Local duality theorems for commutative algebraic groups}

\author{Cristian D. Gonz\'alez\e-\le Avil\'es}
\address{Departamento de Matem\'aticas, Universidad de La Serena, Cisternas 1200, La Serena 1700000, Chile}
\email{cgonzalez@userena.cl}
\thanks{The author is partially supported by Fondecyt grant 1200118.}
%\date{\today}

\subjclass{Primary 11G25, 14G20}
\keywords{Commutative algebraic groups, local fields, Pontryagin duality}

\topmargin -1cm

\maketitle

\smallskip

\begin{abstract} If $k$ is an arbitrary field, we construct a category of $k\e$-\le $1$-motives in which every commutative algebraic $k$-group $G$ has a dual object $G^{\le\vee}$. When $k$ is a local field of arbitrary characteristic, we establish Pontryagin duality theorems that relate the fppf cohomology groups of $G$ to the hypercohomology groups of $G^{\le\vee}$. We also obtain a duality theorem for $H^{\lle 2}\lbe(k,M\lle)$, where $M$ is an arbitrary $k\e$-$1$-motive. These results have applications (to be discussed elsewhere) to certain generalizations of Lichtenbaum\le-van Hamel duality to a class of non-smooth proper $k$-varieties.
\end{abstract}

\section*{0. Introduction}
In \cite{lich}, Lichtenbaum established the existence of a perfect pairing of abelian groups $\pic X\times \br X\to \br k\simeq\Q/\Z\e$, where $k$ is a $p\e$-adic field and $X$ is a proper, smooth and geometrically connected $k\e$-\le curve. This duality theorem was
extended by van Hamel to proper and smooth $k$-schemes of arbitrary dimension in \cite{vhd}. In both works, a key ingredient is Tate's duality theorem for abelian varieties over $k$ \cite[Corollary I.3.4]{adt}, which is applied to the Picard variety of $X$ over $k$ (the latter is, indeed, an abelian variety by the  smoothness of $X\le$). Now, under the indicated properness and smoothness hypotheses, the arguments in \cite{lich} and \cite{vhd} remain valid if the $p\e$-adic field $k$ is replaced by a local function field, i.e., a finite extension of $\mathbb{F}_{\! p}\lbe((t))$, and Tate duality is replaced by Milne\,-Tate duality \cite[Theorem III.7.8]{adt}. However, extending Lichtenbaum\le-van Hamel duality to non-proper or non-smooth $k$-schemes over an arbitrary (non-archimedean) local field $k$ requires new methods. In the case that $X$ is proper but not smooth, the Picard variety $\picom$ of $X$ over $k$ (i.e., the largest smooth and connected $k$-subgroup scheme of the Picard scheme of $X$ over $k$) is not an abelian variety in general. Thus, to extend Lichtenbaum\le-van Hamel duality to proper but non-smooth $k$-schemes, a generalization of Milne\,-Tate duality is required which can describe the Pontryagin dual $H^{\le 1}\lbe(k_{\le\et},\picom\le)^{*}$ of the discrete and torsion group $H^{\le 1}\lbe(k_{\le\et},\picom\le)$ for {\it any} proper $k$-variety ($=\lle$ geometrically integral $k$-scheme) $X$ over any local field $k$. By work of Harari and Szamuely \cite[Theorem 2.3]{hsz1}\,\footnote{\e We note that, although the indicated result is correct, some minor verifications related to the continuity of the pairings in the local function field are needed.}\,, when $G=\picom$ is a semiabelian $k$-variety\,\footnote{\e This is the case, for example, if $X$ is a geometrically nodal $k$-curve.}\,, i.e., an extension $0\to T\to G\to A\to 0$, where $T$ is a $k$-torus and $A$ is an abelian $k$-variety, the profinite group $H^{\le 1}\lbe(k_{\le\et},G\le)^{*}$ is canonically isomorphic to the completion $H^{\le 0}\lbe(k_{\le\et},M\lle)^{\wedge}$ of the hypercohomology group $H^{\le 0}\lbe(k_{\le\et},M\lle)$ relative to the family of open subgroups of finite index for a suitable topology on $H^{\le 0}\lbe(k_{\le\et},M\lle)$, where $M=[\e T^{\lle D}\!\!\to\!\lbe A^{\lle t}\e]$ is the Deligne $k$-$1$-motive dual to $G$. Here $T^{\lle D}$ denotes the Cartier dual of $T$ placed in degree $-1$ and $A^{t}$ is the abelian $k$-variety dual to $A$. Over a $p\e$-adic field $k$, the Harari-Szamuely local duality theorem was extended by Jossen \cite{jos} to a category of ``Deligne $k$-$1$-motives with torsion" (called here {\it Jossen $k$-$1$-motives}). The category of Jossen $k$-$1$-motives is an enlargement of the category of Deligne $1$-motives in which every extension of an abelian variety by an affine algebraic group whose identity component is a torus has a dual. If $G$ is an extension of the indicated type over a $p\e$-\le adic field $k$, Jossen's local duality theorem \cite[Theorem 4.1.1]{jos} implies that the Pontryagin dual of $H^{\le 1}\lbe(k_{\le\et},G\le)$ is canonically isomorphic to $H^{\le 0}\lbe(k_{\le\et},M\lle)^{\wedge}$, where $M$ is the Jossen $k$-$1$-motive dual to $G$. Now, an arbitrary commutative algebraic group over a $p\e$-\le adic field $k$ is an extension of an abelian $k$-variety by an affine algebraic $k$-group whose identity component is isomorphic to $T\be\times\be\gak^{\lle n}$, where $T$ is a $k$-torus and $n\geq 0$ is an integer \cite[Theorem 2.9(i)]{bri17b}. Thus, it is reasonable to expect that a suitable enlargement of the category of Jossen $k$-$1$-motives (whose objects one might call ``Jossen $k$-$1$-motives with additive components $\gak^{\lle n}$") will contain a dual of any given commutative algebraic group over a $p\e$-\le adic field $k$. Further, since $\gak$ has trivial higher cohomology, an essentially trivial extension of Jossen's local duality theorem to this larger category will then describe the Pontraygin dual of $H^{\le 1}\lbe(k_{\le\et},G\e)$ for {\it any} commutative algebraic $k$-group $G$ over a $p\e$-\le adic field $k$. In fact, such an enlargement of the category of Jossen $k$-$1$-motives was already introduced by Russell in \cite{rus13} for any perfect field $k$. In effect, Russell defined a category of ``$k$-$1$-motives with unipotent part" in which every {\it smooth} commutative algebraic $k$-group has a dual. The key fact that underlies Russell's construction is the following (commutative case of\e) Chevalley's theorem\le: if $k$ is a perfect field, then every smooth and connected commutative algebraic $k$-group $G$ is (uniquely) an extension $0\to L\to G\to A\to 0$, where $A$ is an abelian $k$-variety and $L$ is an affine, {\it smooth} and connected algebraic $k$-group. The Russell $k$-$1$-motive dual to $G$ (or, more precisely, its associated two\e-\le term  complex) is the complex (of fppf sheaves on $k$) $G^{\le\vee}=[\e L^{\be D}\!\lbe\overset{\!v}{\to}\!\lbe A^{t}\e]$, where $L^{\be D}$ is the Cartier dual of $L$ and the map $v$ will be defined below. Russell's definition is, in fact, formally correct over {\it any} field $k$ and yields the following construction:

If $k$ is any field, let $\s{M}_{k,\e 1}$ be the category of triples
$\mathcal M=(K^{\be D}\be,u\e,\mathcal E(L,\iota, G,\pi, A)\le)$, where $K,L,G$ and $A$ are commutative algebraic $k$-groups, $K$ and $L$ are affine, $A$ is an abelian variety, $u\colon K^{\be\lle D}\to G$ is a morphism of abelian fppf sheaves on $k$ and $\mathcal E(L,\iota, G,\pi, A)$ is an extension $0\to L\overset{\!\be\iota}{\to} G\overset{\!\pi}{\to}A\to 0$ in the category of commutative algebraic $k$-groups. The morphisms of $\s{M}_{k,\e 1}$ are described in Definition \ref{1mot} below. Now, {\it every} commutative algebraic $k$-group $G$ has a dual in $\s{M}_{k,\e 1}$. Indeed, by \cite[Theorem 2.3(i)]{bri17b}, $G$ is (not uniquely, in general) an extension $\mathcal E=\mathcal E(L,\iota, G,\pi, A)$. The latter defines a $k\e$-$1$-motive $(G,\mathcal E\le)=(0\e,0\e,\mathcal E(L,\iota, G,\pi, A)\le)$ whose dual is (by definition) the $k\e$-$1$-motive $(G,\mathcal E\le)^{\vee}=(L^{\be D}\!,v,\mathcal E(0,0,A^{t},{\rm Id},A^{t}\e))$, where the morphism $v\colon L^{\be D}\to A^{t}$ is defined as follows: if $\chi\in L^{\be D}=\shomk( L,\G_{m,\le k})$ and $\beta\colon A^{t}\isoto
\extk^{1}\be(A,\G_{m,\le k})$ is the generalized Barsotti-Weil formula, then $v(\le\chi\lbe)=\beta^{\e-1}\lbe(\e\chi_{*}\lbe(\mathcal E\le))\in A^{t}$, where $\chi_{*}\lbe(\mathcal E\le)\in \extk^{1}\be(A,\G_{m,\le k})$ is the pushout of $\mathcal E=\mathcal E(L,\iota, G,\pi, A)$ along $\chi\colon L\to \G_{m,\le k}$.

The category of $k$-$1$-motives $\s{M}_{k,\e 1}$ defined above contains a full subcategory $\s{M}_{k,\e 1}^{\e\sm}$ of {\it smooth} $k$-$1$-motives, whose objects are the tuples $\mathcal M=(K^{\be\lle D}\be,u\e,\mathcal E(L,\iota, G,\pi, A)\le)$ as above in which both $K$ and $L$ are smooth over $k$ (whence $\s{M}_{k,\e 1}^{\e\sm}$ is stable under duality of $k$-$1$-motives). If $k$ is {\it perfect}, then 
every smooth and connected commutative algebraic $k$-group $G$ has a {\it unique} Chevalley decomposition which defines an object of $\s{M}_{k,\e 1}^{\e\sm}$ whose dual also lies in $\s{M}_{k,\e 1}^{\e\sm}$. But the latter fails if $k$ is imperfect. In effect, over an imperfect field $k$ there exist {\it smooth and connected} commutative algebraic $k$-groups $G$ which are {\it not} an extension of an abelian $k$-variety by any smooth affine algebraic $k$-group, whence $(G,\mathcal E\le)\in \s{M}_{k,\e 1}\!\setminus\s{M}_{k,\e 1}^{\e\sm}$ for {\it every} extension $\mathcal E=\mathcal E(L,\iota, G,\pi, A)$ as above. The following striking illustration of this fact is due to Totaro \cite[Example 3.1]{to}. Let $p\geq 3$, set $k=\mathbb{F}_{\! p}\lbe((t))$ and let $X$ be the regular completion of the affine $k\le$-\le curve $y^{\le 2}= x(x-1)(x^{\le p}-t)$. Then $X$ is a geometrically cuspidal $k$-curve whose Picard variety $G=\picom$ contains no non-trivial smooth connected 
affine $k$-subgroups. Further, $G$ is an extension
$0\to L\to G\to E\to 0$, where $E$ is an elliptic $k\e$-\le curve and $L$ is a connected affine algebraic $k$-group of dimension $(\, p-1)/2$ whose maximal smooth $k$-subgroup $L^{\lbe\sm}$ {\it has dimension $0$}. It follows that $G$ is not an extension of an abelian $k$-variety by any smooth affine algebraic $k$-group.

The above example shows that, in order to describe the Pontryagin dual of $H^{\le 1}\lbe(k_{\le\et},\picom\le)$ for an arbitrary proper $k$-variety $X$ over a local function field $k$, one {\it must}\,\,\footnote{\e At least if one intends to use Chevalley-like decompositions of the coefficient group.}\, work in the full category of $k$-$1$-motives $\s{M}_{k,\e 1}$ rather than in the subcategory $\s{M}_{k,\e 1}^{\e\sm}$ of smooth $k$-$1$-motives.

Now, the proofs of the Harari-Szamuely and Jossen local duality theorems combine, via certain standard {\it d\'evissages}\e, Milne\,-Tate duality for abelian $k$-varieties, Tate\e-\le Nakayama duality for $k$-tori \cite[Corollary I.2.4]{adt} and Poitou\e-\le Tate duality for finite $k\e$-\le groups over a $p\e$-adic field $k$ \cite[Corollary I.2.3]{adt}. The latter two classical duality theorems were extended by Rosengarten \cite{ros18} to the class of {\it all} affine commutative algebraic $k$-groups over an arbitrary local field $k$. See Theorem \ref{ros18} below for the precise statements. In this paper we combine, via appropriate {\it d\'evissages}, Milne\,-Tate duality and Rosengarten local duality to partly generalize \cite[Theorem 2.3]{hsz1} to the (self-dual) category of {\it pure} $k\e$-$1$-motives, i.e., those $k$-$1$-motives of the form $(G,\mathcal E)$ or $(G,\mathcal E\le)^{\vee}$, where $\mathcal E=\mathcal E(L,\iota, G,\pi, A)$ is as above. As in \cite[\S2]{hsz1}, a key step of the proof is to suitably topologize the abstract group $H^{\le 0}\lbe(k,G^{\le \vee})$ so that the associated maximal Hausdorff quotient $H^{\le 0}\lbe(k,G^{\le \vee})_{\rm Haus}$ is topologically isomorphic to
$H^{\le 1}\lbe(k,G\le)^{*}$\,\footnote{\e When no subscript on $k$ appears in such (hyper)cohomology groups, the cohomology in question is meant relative to the fppf site on $k$.}\,. Now $H^{\le 0}\lbe(k,G^{\le \vee})$ fits into an exact sequence of abelian groups 
\[
A^{t}\lbe(k)\overset{f^{\lle(0)}}{\to} H^{\le 0}\lbe(k,G^{\le \vee})\overset{g^{\lle(0)}}{\to} H^{\le 1}\lbe(k,L^{\be D}),
\]
where the left(respectively, right)-hand group is equipped with the (locally compact and second countable) topology induced by that of $k$ (respectively, the topology introduced in \cite[\S3.6]{ros18}). The preceding sequence induces an extension of abelian groups
\begin{equation}\label{st1}
0\to\coim f^{\lle(0)}\overset{\widetilde{f^{\lle(0)}}}{\to} H^{\le 0}\lbe(k,G^{\le \vee})\overset{\widetilde{g^{\lle(0)}}}{\to} \img g^{\lle(0)}\to 0,
\end{equation}
where $\coim f^{\lle(0)}=A^{t}\lbe(k)/\e\krn f^{\lle(0)}$ and $\img g^{\lle(0)}$ are equipped with their natural topologies induced by those of $A^{t}\lbe(k)$ and $H^{\le 1}\lbe(k,L^{\be D})$, respectively. Now, given abelian groups $A$ and $B$ equipped with first countable topologies, the general problem of introducing a topology on the middle term $E$ of an extension of abelian groups $\mathcal S\colon 0\to A\overset{f}{\to} E\overset{\!g}{\to} B\to 0$ so that the extension becomes a topological one (i.e., the maps $f$ and $g$ are open onto their images) was solved (in a general non\le-\le commutative setting) by Nagao \cite{na} using Schreier's well-known method of classifying abstract group extensions (for which see, e.g., \cite[\S6.6]{we}). The commutative aspects of Nagao's construction are reviewed in section \ref{ntop} below. As in Schreier's method, Nagao's construction begins with the choice of a set-theoretic section $\sigma\colon B\to E$ of $g$ and ends with an extension of abelian topological groups $0\to A\overset{f}{\to} E_{\lle\sigma}\overset{\!g}{\to} B\to 0$ which agrees with the extension $\mathcal S$ when the topology of $E_{\lle\sigma}$ is disregarded. In particular, the choice of a section $\sigma$ of the map $\widetilde{g^{\lle(0)}}$ in \eqref{st1} results in an extension of locally compact and second countable abelian topological groups
\[
0\to\coim f^{\lle(0)}\overset{\widetilde{f^{\lle(0)}}}{\to} H^{\le 0}_{\lbe\sigma}\lbe(k,G^{\le \vee})\overset{\widetilde{g^{\lle(0)}}}{\to} \img g^{\lle(0)}\to 0.
\]
If $G$ is a semiabelian variety (or, more generally, if the group $L$ above is smooth and of multiplicative type), then $\img g^{\lle(0)}$ is {\it discrete} and the topology of $H^{\le 0}_{\lbe\sigma}\lbe(k,G^{\le \vee})$ is independent of the choice of $\sigma$. However, the latter is unlikely to be the case if $G$ is arbitrary. In fact, Nagao showed that, if  $\sigma$ and $\sigma^{\le\prime}$ are two distinct sections of the map $g\colon E\to B$ above, then the topologies of $E_{\sigma}$ and $E_{\sigma^{\le\prime}}$ are either the same or they are incomparable\,\e\footnote{\e\le I.e., neither is finer than the other.}\,(!). In the specific case of the extension \eqref{st1}, we will show that, although the topology of $H^{\le 0}_{\lbe\sigma}\lbe(k,G^{\le \vee})$ may depend on the choice of $\sigma$, the topology of $H^{\le 0}_{\lbe\sigma}\lbe(k,G^{\le \vee})_{\rm Haus}$ does not. The preceding topological group will be denoted by $H^{\le 0}_{\mathcal E}\lbe(k,G^{\le \vee})_{\rm Haus}$. We can now state the main theorem of this paper\,\footnote{\e In part (i) of the theorem we write $H^{\le 0}\lbe(k,G\le)$ for $G(k)$ to underscore the symmetry of the statement.}\,.

\begin{theorem} \label{thm:main} Let $k$ be a non-archimedean local field, let $G$ be a commutative algebraic $k$-group given as an extension $\mathcal E=\mathcal E(L,\iota, G,\pi, A)$ with associated $k$-$1$-motive $(G,\mathcal E\le)=(0\e,0\e,\mathcal E(L,\iota, G,\pi, A)\le)$ and let $G^{\le\vee}=[\e L^{\! D}\!\be\overset{\!v}{\to}\! A^{t}\e]$ be the complex associated to the dual $k\e$-$1$-motive $(G,\mathcal E\le)^{\vee}=(L^{\be D}\!,v,\mathcal E(0,0,A^{t},{\rm Id},A^{t}\e))$. Consider the profinite group $H^{\le -1}_{\lbe\wedge\be}\be\le(k,G^{\le \vee})=\krn\be[v(k)^{\wedge}\colon\! L^{\be D}\be\lle(k)^{\wedge}\!\to\! A^{t}(k)^{\wedge}\e]$ and recall the topological group $H^{\le 0}_{\mathcal E}\lbe(k,G^{\le \vee})_{\rm Haus}$ introduced above. Then there exist continuous perfect pairings of Hausdorff and locally compact abelian topological groups
\begin{enumerate}
\item[(i)] $H^{\le 0}\lbe(k,G\le)^{\wedge}\times H^{\le 1}\lbe(k,G^{\le \vee}\lbe)\to\Q/\lle\Z\e$,
\item[(ii)] $H^{\le 1}\lbe(k,G\le)\times H^{\le 0}_{\mathcal E}\lbe(k,G^{\le \vee})_{\rm Haus}\to\Q/\lle\Z$ and
\item[(iii)] $H^{\le 2}\lbe(k,G\lle)\times H^{\le -1}_{\lbe\wedge\be}\be\le(k,G^{\le \vee})\to\Q/\lle\Z$.
\end{enumerate}
\end{theorem}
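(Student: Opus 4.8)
The plan is to produce a single pairing in the derived category from which (i)--(iii) arise as cup products in complementary degrees, and then to establish perfectness by a d\'evissage matching the cohomology of $G$ against that of $G^{\le\vee}$ by means of Milne\,-Tate duality for abelian varieties and Rosengarten duality (Theorem \ref{ros18}) for affine commutative $k$-groups. The construction of the pairing is essentially formal: the recipe defining $v$ --- push $\mathcal E$ out along a character $\chi\colon L\to\gmk$ and apply $\beta^{-1}$ --- is exactly what is needed to trivialize, functorially in $\chi\in L^{\be D}$, the restriction to $G\times L^{\be D}$ of the pullback along $G\to A$ of the Poincar\'e biextension of $(A,A^{t})$ by $\gmk$ (the trivialization being the tautological section of $\chi_{*}\mathcal E$ lifted to $G$). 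This datum is a morphism $(G,\mathcal E)\otimes^{\mathbb L}(G,\mathcal E)^{\vee}\to\gmk[1]$ in the derived category of fppf sheaves on $k$, i.e.\ a morphism $G\otimes^{\mathbb L}G^{\le\vee}\to\gmk[1]$ refining Deligne's pairing between a $1$-motive and its dual; taking hypercohomology over $k$ and using $H^{\le 2}(k,\gmk)\cong\br k\cong\Q/\lle\Z$ yields pairings $H^{\le i}(k,G)\times H^{\le j}(k,G^{\le\vee})\to\Q/\lle\Z$ for $i+j=1$, which after the completions prescribed in the statement are those of (i), (ii) and (iii), for $(i,j)=(0,1),(1,0),(2,-1)$.

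For perfectness the two basic exact sequences are: the long exact fppf cohomology sequence of $\mathcal E=\mathcal E(L,\iota, G,\pi, A)$, $\cdots\to H^{\le i}(k,L)\to H^{\le i}(k,G)\to H^{\le i}(k,A)\to H^{\le i+1}(k,L)\to\cdots$, and the one attached to the exact triangle $A^{t}\to G^{\le\vee}\to L^{\be D}[1]\to A^{t}[1]$ associated with the complex $G^{\le\vee}=[\e L^{\be D}\!\overset{v}{\to}\! A^{t}\e]$, namely $\cdots\to H^{\le j}(k,A^{t})\to H^{\le j}(k,G^{\le\vee})\to H^{\le j+1}(k,L^{\be D})\overset{v}{\to}H^{\le j+1}(k,A^{t})\to\cdots$. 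Milne\,-Tate duality pairs $H^{\le i}(k,A)$ with $H^{\le 1-i}(k,A^{t})$ (perfectly, as locally compact second countable groups, after profinite completion in degree $0$ and using $H^{\le r}(k,A)=0$ for $r\ge 2$), and Theorem \ref{ros18} pairs the cohomology of $L$ with that of $L^{\be D}$ in complementary degrees summing to $2$ (again perfectly, after completion in degree $0$). Crucially, the boundary maps of the two sequences are adjoint up to sign under these dualities: the connecting map $H^{\le i}(k,A)\to H^{\le i+1}(k,L)$ is cup product with the class of $\mathcal E$, and $v$ was \emph{defined} from that class via $\beta$, so the transpose of the first boundary map is, up to sign, the map induced by $v$ in the second sequence.

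It then remains, for each of the three degree ranges, to splice the relevant segment of the (completed) $G$-sequence against the Pontryagin dual of the corresponding segment of the $G^{\le\vee}$-sequence into a commutative ladder and invoke the five lemma. For (i) one compares the segment of the $\mathcal E$-sequence computing $G(k)^{\wedge}$ with the one computing $H^{\le 1}(k,G^{\le\vee})$, all four outer terms becoming dual after the relevant completions. For (ii) one compares the segment computing $H^{\le 1}(k,G)$ with the one computing $H^{\le 0}(k,G^{\le\vee})$; here the middle term $H^{\le 0}(k,G^{\le\vee})$ carries only the (possibly $\sigma$-dependent) Nagao topology of section \ref{ntop}, so the honest topological partner of $H^{\le 1}(k,G)$ is its maximal Hausdorff quotient $H^{\le 0}_{\mathcal E}(k,G^{\le\vee})_{\rm Haus}$, which one must first show is independent of $\sigma$. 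For (iii) one uses $H^{\le 2}(k,G)=\cok[\le H^{\le 1}(k,A)\to H^{\le 2}(k,L)\le]$ (again by $H^{\le r}(k,A)=0$ for $r\ge2$) and identifies its Pontryagin dual, via $H^{\le 2}(k,L)\cong(L^{\be D}(k)^{\wedge})^{*}$ and $(H^{\le 1}(k,A))^{*}\cong A^{t}(k)^{\wedge}$, with $\krn[\le L^{\be D}(k)^{\wedge}\!\to\! A^{t}(k)^{\wedge}\le]=H^{\le -1}_{\lbe\wedge\be}(k,G^{\le\vee})$; this is precisely why the completions are built into the definition of $H^{\le -1}_{\lbe\wedge\be}(k,G^{\le\vee})$.

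The homological bookkeeping above is routine; the real difficulty is topological. One must (a) equip every middle group with a topology making the pertinent long exact sequence a strict exact sequence of topological groups and the pertinent pairing a continuous pairing of topological groups --- this is where Nagao's construction is indispensable, together with the non-obvious point (flagged in the introduction) that although $H^{\le 0}_{\sigma}(k,G^{\le\vee})$ may genuinely depend on the section $\sigma$, its maximal Hausdorff quotient does not, so $H^{\le 0}_{\mathcal E}(k,G^{\le\vee})_{\rm Haus}$ is well defined; (b) control how profinite completion, closure of images, and passage to maximal Hausdorff quotients interact with the exact sequences (e.g.\ that the completed $\mathcal E$-sequence stays exact in the needed range and that the images $\img g^{\lle(i)}$ are closed), which is delicate over a local function field, where the groups $A(k)$, $A^{t}(k)$ and the $H^{\le 1}$ of affine groups need be neither compact nor discrete; and (c) check the compatibility, including signs, of the Milne\,-Tate and Rosengarten pairings with the maps induced by $\iota$, $\pi$ and $v$. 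I expect the openness and strictness bookkeeping of (a)--(b), together with the boundary-map compatibility (c), to be the main obstacle; and I expect (c) to be the one place where the full strength of Theorem \ref{ros18} is genuinely needed, since over an imperfect $k$ the group $L$ may fail to be built from tori and finite groups (Totaro's cuspidal curve), so that classical Tate\,-Nakayama and Poitou\,-Tate duality do not suffice.
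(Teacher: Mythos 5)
Your proposal follows essentially the same route as the paper: the pairing is the one induced by the pullback of the Poincar\'e biextension (the paper's morphism $M\otimes^{\mathbf{L}}M^{\le\vee}\to\G_{m,k}[1]$ and the pairings \eqref{bpair}), and perfectness is established exactly as you outline, by five-lemma ladders matching the completed/dualized cohomology sequences of $\mathcal E$ and of $[\e L^{\be D}\!\to A^{t}\e]$ via Milne\,-Tate duality and Theorem \ref{ros18}, with the Nagao topology, the $\sigma$-independence of the Hausdorff quotient (obtained in the paper as a byproduct of the isomorphism with $H^{\le 1}\lbe(k,G\le)^{*}$), and the completion/strictness bookkeeping handled as you anticipate. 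The compatibilities you flag in (c) are treated in the paper via the \v{C}ech comparison (Lemma \ref{lem:cech}) and the cup-product compatibility of loc.\ cit.\ [SP, Lemma 07MC], so there is no essential difference of approach.
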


In part (ii), both groups are second countable and $H^{\le 1}\lbe(k_{\fl},G\le)$ is torsion. Further, $H^{\le 0}_{\mathcal E}\lbe(k,G^{\le \vee})_{\rm Haus}$ is a topological extension
\[
0\to A^{t}\lbe(k)/\,\overline{\img v(k)}\to H^{\le 0}_{\mathcal E}\lbe(k,G^{\le \vee})_{\rm Haus}\to \krn\be[H^{\e 1}\lbe(k,L^{\! D}\le)\overset{v^{\lle(1)}}{\to} H^{\le 1}\lbe(k,A^{t}\le)]\to 0,
\]
where the left-hand group is profinite and the right-hand group is locally profinite and of finite exponent. If $G$ is smooth, then $H^{\le 1}\lbe(k,G\le)=H^{\le 1}\lbe(k_{\le\et},G\le)$ is discrete and countable and $H^{\le 0}_{\mathcal E}\lbe(k,G^{\le \vee})_{\rm Haus}$ is profinite. 

\smallskip

Theorem \ref{thm:main} only partly generalizes \cite[Theorem 2.3]{hsz1}. A full generalization of \cite[Theorem 2.3]{hsz1}, i.e., establishing duality theorems for $H^{\le r}\lle\be(k,M\lle)$ for $r\leq 2$ and arbitrary $k\e$-$1$-motives $M$, requires more work and will be discussed elsewhere. By constrast, the following complete generalization of part (iii) of Theorem \ref{thm:main} follows without difficulty from the {\it proof} of part (i) of the theorem:

\begin{theorem} \label{main2} {\rm(= Theorem \ref{-1bis})} Let $k$ be a non-archimedean local field and let $M=[\le K^{\lbe D}\!\to  G\e\le]$ and $M^{\le\vee}=[\e L^{\be D}\!\to\widetilde{G}\,]$ be the complexes associated to the $k\e$-$1$-motives $(K^{\be\lle D}\be,u\e,\mathcal E(L,\iota, G,\pi, A)\le)$ and its dual $(L^{\be\lle D}\be,v\e,\mathcal E(K,\tilde{\iota}, \widetilde{G},\widetilde{\pi}, A^{t})\le)$, respectively. Set $H^{\le -1}_{\lbe\wedge\be}(k,M^{\lle \vee})\!\defeq\!\krn\be[\e v(k)^{\wedge}\colon\! L^{\be D}\be\lle(k)^{\wedge}\to\widetilde{G}\lle(k)^{\wedge}\le]$. Then there exists a continuous perfect pairing of abelian topological groups
\[
H^{\le 2}\lbe(k,M\lle)\times  H^{\e -1}_{\lbe\wedge\be}\be\le(k,M^{\lle \vee}) \to\Q/\Z,
\]
where $H^{\le 2}\lbe(k,M\lle)$ is discrete and torsion and $H^{\le -1}_{\lbe\wedge\be}(k,M^{\lle \vee})$ is profinite.
\end{theorem}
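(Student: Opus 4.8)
The plan is to imitate the proof of Theorem~\ref{thm:main}(i), running the same dévissage one cohomological degree higher. First I would fix the pairing: the duality of $k\e$-$1$-motives is incarnated by the canonical morphism $M\otimes^{\mathbb L}M^{\vee}\to\G_{m,k}[1]$ in the derived category of fppf sheaves on $k$, and the cup product composed with the invariant isomorphism $H^{2}(k,\G_{m,k})\isoto\Q/\Z$ produces a pairing $H^{2}(k,M)\times H^{-1}(k,M^{\vee})\to\Q/\Z$. Since $H^{2}(k,M)$ will turn out to be torsion, this pairing automatically factors through the profinite completion of $H^{-1}(k,M^{\vee})$, which one identifies with $H^{-1}_{\wedge}(k,M^{\vee})=\krn[\e v(k)^{\wedge}\colon L^{D}(k)^{\wedge}\to\widetilde{G}(k)^{\wedge}\e]$; it then remains to prove that the induced pairing is perfect and that the two sides are, respectively, discrete torsion and profinite.

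For the dévissage, recall that $M=[\e K^{D}\!\to G\e]$ is the cone of the structure map $u\colon K^{D}\to G$, so there is an exact triangle $K^{D}\to G\to M\overset{+1}{\to}$; applying the duality functor converts it into an exact triangle $K\to M^{\vee}\to G^{\vee}\overset{+1}{\to}$, where $G^{\vee}=[\e L^{D}\!\to A^{t}\e]$ is the complex attached to $(G,\mathcal E)^{\vee}$ and the connecting map $G^{\vee}\to K[1]$ is, by the construction of $M^{\vee}$, the transpose of $u$ under the duality pairing. Over a non-archimedean local field one has $H^{r}(k,K^{D})=0$ and $H^{r}(k,G)=0$ for all $r\geq 3$: the former because, by Rosengarten duality (Theorem~\ref{ros18}), these groups are Pontryagin dual to $H^{2-r}(k,K)=0$; the latter by dévissage of $0\to L\to G\to A\to 0$, using $H^{r}(k,A)=0$ for $r\geq 2$ (Milne\,-Tate) and $H^{r}(k,L)=0$ for $r\geq 3$ (Rosengarten). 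The two triangles then collapse to $H^{2}(k,M)=\cok[\e u^{(2)}\colon H^{2}(k,K^{D})\to H^{2}(k,G)\e]$ and $H^{-1}(k,M^{\vee})=\krn[\e\delta\colon H^{-1}(k,G^{\vee})\to K(k)\e]$, where $u^{(2)}$ and $\delta$ are the maps on cohomology induced by $u$ and by the connecting map of the dual triangle. In particular $H^{2}(k,M)$ is a quotient of $H^{2}(k,G)$, which is a quotient of the torsion group $H^{2}(k,L)$; hence $H^{2}(k,M)$ is discrete and torsion. On the other side, $H^{-1}(k,M^{\vee})$ is a subgroup of $H^{-1}(k,G^{\vee})\subseteq L^{D}(k)$, the finitely generated character group of $L$; hence $H^{-1}_{\wedge}(k,M^{\vee})$ is profinite.

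To assemble the duality I would apply $\Hom(-,\Q/\Z)$ to the right-exact sequence $H^{2}(k,K^{D})\overset{u^{(2)}}{\to}H^{2}(k,G)\to H^{2}(k,M)\to 0$, obtaining an exact sequence $0\to H^{2}(k,M)^{*}\to H^{2}(k,G)^{*}\overset{(u^{(2)})^{*}}{\to}H^{2}(k,K^{D})^{*}$. Now Theorem~\ref{thm:main}(iii) (the case $M=G$, which is exactly what the proof of part~(i) delivers) identifies $H^{2}(k,G)^{*}$ with $H^{-1}_{\wedge}(k,G^{\vee})=\krn[\e L^{D}(k)^{\wedge}\to A^{t}(k)^{\wedge}\e]$; Rosengarten local duality (Theorem~\ref{ros18}) identifies $H^{2}(k,K^{D})^{*}$ with $H^{0}(k,K)^{\wedge}=K(k)^{\wedge}$; and the compatibility of the cup product with the two dual triangles identifies $(u^{(2)})^{*}$ with the completion of $\delta$. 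Therefore $H^{2}(k,M)^{*}=\krn[\e H^{-1}_{\wedge}(k,G^{\vee})\to K(k)^{\wedge}\e]=H^{-1}_{\wedge}(k,M^{\vee})$, which is exactly the asserted perfect pairing; perfection on the other side is then automatic — a discrete torsion group and a profinite group are Pontryagin dual as soon as the pairing is perfect on one of them — and continuity is inherited from the cup product and the invariant map.

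The step I expect to be the main obstacle — and the reason this is ``the proof of part~(i)'' rather than something purely formal — is the verification that the transpose of $u^{(2)}$, with respect to the Rosengarten pairing for $K$ and the Milne\,-Tate-type pairing for $G$, agrees up to sign with the connecting homomorphism $\delta\colon H^{-1}(k,G^{\vee})\to K(k)$; this is a compatibility between the biextension pairing underlying $M\otimes^{\mathbb L}M^{\vee}\to\G_{m,k}[1]$ and the construction of the dual $k\e$-$1$-motive, and it must be unwound alongside that construction. The only other point requiring attention is the routine bookkeeping that profinite completion commutes with the kernels appearing above, so that $\krn[H^{-1}_{\wedge}(k,G^{\vee})\to K(k)^{\wedge}]$ really coincides with $H^{-1}_{\wedge}(k,M^{\vee})$ as defined in the statement; this follows from the finite generation of $L^{D}(k)$ and the flatness of $\widehat{\Z}$ over $\Z$, exactly as in the treatment of part~(i).
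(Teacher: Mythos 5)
Your overall strategy is the same as the paper's: dualize the tail $H^{2}(k,K^{D})\to H^{2}(k,G)\to H^{2}(k,M)\to 0$ of the long exact sequence of Lemma \ref{dle}, identify $H^{2}(k,G)^{*}$ with $\krn[L^{D}(k)^{\wedge}\to A^{t}(k)]$ by the pure case (Lemma \ref{-1dp}, i.e.\ what the proof of part (i) of the main theorem delivers), identify $H^{2}(k,K^{D})^{*}$ with $K(k)^{\wedge}$ by Theorem \ref{ros18}(ii), and conclude by comparing kernels; this is exactly the diagram in the paper's Theorem \ref{-1bis}. The compatibility you flag (the transpose of $u^{(2)}$ versus the dual-side connecting map) is indeed needed, and the paper disposes of it via the functoriality of the biextension pairings, so flagging it is acceptable.

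The genuine gap is the step you dismiss as ``routine bookkeeping that profinite completion commutes with the kernels''. First, $H^{\,-1}_{\wedge}(k,M^{\vee})$ is by definition the kernel of the \emph{completed} map $L^{D}(k)^{\wedge}\to\widetilde{G}(k)^{\wedge}$, not the completion of $\krn[L^{D}(k)\to\widetilde{G}(k)]$; these differ in general (a finitely generated subgroup of a profinite group need not have its profinite completion inject into that group), so ``completing $\delta$'' does not directly produce the group in the statement. Second, and more importantly, the identification $\krn[H^{\,-1}_{\wedge}(k,G^{\vee})\to K(k)^{\wedge}]=H^{\,-1}_{\wedge}(k,M^{\vee})$ requires knowing that $0\to K(k)^{\wedge}\to\widetilde{G}(k)^{\wedge}\to A^{t}(k)$ is exact, i.e.\ that $K(k)^{\wedge}$ maps isomorphically onto $\krn[\widetilde{\pi}(k)^{\wedge}]$; here $K(k)$ and $\widetilde{G}(k)$ are \emph{not} finitely generated, so finite generation of $L^{D}(k)$ and flatness of $\widehat{\Z}$ prove nothing about this. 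In the paper this exactness is the top row of diagram \eqref{kat} (applied to the extension $0\to K\to\widetilde{G}\to A^{t}\to 0$), and its proof is not formal: exactness at the middle spot invokes \cite[Lemma 2.1]{gat09}, and injectivity at $K(k)^{\wedge}$ is extracted from the dual exact sequence via Theorem \ref{ros18}; only then does the kernel--cokernel lemma (Lemma \ref{ker-cok}) applied to $L^{D}(k)^{\wedge}\to\widetilde{G}(k)^{\wedge}\to A^{t}(k)$ give the sequence $0\to H^{\,-1}_{\wedge}(k,M^{\vee})\to H^{\,-1}_{\wedge}(k,G^{\vee})\to K(k)^{\wedge}$ (Lemma \ref{mm'}). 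You should replace your flatness argument by this input. Two minor points: profiniteness of $H^{\,-1}_{\wedge}(k,M^{\vee})$ is best seen directly as a closed subgroup of the profinite group $L^{D}(k)^{\wedge}$, and the vanishing $H^{3}(k,K^{D})=0$ is not a formal consequence of Theorem \ref{ros18} but is a separate result of Rosengarten cited in the proof of Lemma \ref{dle}.
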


Finally, we point out that the theory of $k$-$1$-motives introduced in this paper has a wide range of applications. For a brief description of some of these, see \cite{ga24}.

\section*{Acknowledgements}

I thank Zev Rosengarten for key suggestions\,\e\footnote{\e See \url{https://www.youtube.com/watch?v=7VNbl43gKMk&t=1848s}.}\,, Michel Brion, Brian Conrad, MathOverflow user YCor, David Harari and Ki-Seng Tan for helpful comments. I also thank Fabien Trihan for encouragement. This research was funded by Fondecyt grant 1200118.

\section{Preliminaries}

\subsection{Generalities}

If $S$ is a scheme, we will write $S_{\fl}$ for the category of all $S$-schemes equipped with the fppf topology, i.e., $S_{\fl}=(\textrm{Sch}/\lbe S\le)_{\textrm{fppf}}$. The category of abelian sheaves on $S_{\fl}$ will be denoted by $\sfs$. We will write $C(S_{\fl})$ for the category of complexes of objects of $\sfs$ and $D(S_{\fl})$ for the corresponding derived category. The bounded versions of $C(S_{\fl})$ and $D(S_{\fl})$ will be denoted by $\cbs$ and $\dbs$, respectively. A commutative $S$-group scheme will be identified with the object of $\sfs$ that it represents. A sequence of commutative $S$-group schemes $\,0\to G_{1}\to G_{2}\to G_{3}\to 0$ will be called {\it exact} if it is exact as a sequence in $\sfs$. If $S=\spec R$ is affine, we will write $R_{\le\fl}$ for $S_{\fl}$. If $k$ is a field, a $k$-group scheme will be called a {\it $k$-group}.

\smallskip

The {\it mapping cone} of a morphism $u\colon A^{\bullet}\to B^{\le\bullet}$ in $C(S_{\fl})$ is the object $C^{\le\bullet}(u\le)\in C(S_{\fl})$ with components $C^{\e n}(u\le)=A^{n+1}\be\oplus B^{\le n}$ and differentials
\[
d_{\le C\lbe(u\le)}^{\e n}\lbe(a,b)=(-d^{\e n+1}_{\be A}\lbe(a), u(a)+d_{\lbe B}^{\e n}(b)),
\]
where $n\in\Z, a\in A^{n+1}$ and $b\in B^{\le n}$. If $u\colon A\to B$ is a morphism in $\sfs$, where $A$ and $B$ are regarded as complexes concentrated in degree 0, $C^{\le\bullet}(u)\in \cbs$ is the complex $[\lle A\overset{\!u}{\to} B\le]$ in degrees $-1$ and $0$. There exists a canonical exact sequence in $\cbs$
\begin{equation}\label{cex}
0\to B\overset{\!i}{\to} C^{\le\bullet}(u\le)\overset{\!q}{\to}  A[1]\to 0,
\end{equation}
where $i$ is the injection $[\lle 0\to B\le]\to [\lle A\to B\le]$ and $q$ is the surjection $[\lle A\to B\le]\to [\lle A\to 0\e]=A[1]$.

We now recall that, if $F\in\sfs$, then
\[
F^{\lle D}=\shom( F,\G_{m,\e k})
\]
is the object of $\sfs$ such that $F^{\lle D}\lbe(\le T\le)=\Hom_{\, T_{\fl}^{\le\sim}}\lbe(\le F_{\le T},\G_{m,\e T}\lbe)$ for every $S$-scheme $T$.

If $F_{1}, F_{2}\in \sfs$ and $i\geq 0$ is an integer, $\exts^{\e i}\be(F_{1},F_{2})\in \sfs$ will denote the sheaf associated to the presheaf $({\rm Sch}/S\e)\to\ab, (\le T\to S\le)\mapsto {\rm Ext}^{ i}_{\,T_{\fl}^{\le\sim}}\be(F_{1,\e T},F_{2,\e T})$ \cite[Remark III.1.24]{miet}. For every $S$-scheme $T$, there exists a canonical morphism
\begin{equation}\label{adj}
a_{\e T}\colon {\rm Ext}^{ i}_{\,T_{\fl}^{\le\sim}}\be(F_{1,\e T}\be,F_{2,\e T}\le)\to \exts^{\e i}\be(F_{1},F_{2})(\le T\e).
\end{equation}
See \cite[Remark, p.~46]{t}. If $i=0$, the preceding map is an isomorphism of abelian groups (cf. \cite[tag 08JS]{sp}).

\smallskip

If $A$ is an abelian scheme over $S$, $A^{t}$ will denote the abelian $S$-scheme dual to $A$. For the existence of $A^{t}$, see \cite[Theorem 1.9]{fc}. By the generalized Barsotti-Weil formula and the biduality theorem \cite[Theorems 18.1 and 20.2]{o2}, there exist functorial isomorphisms $\beta_{\lbe A}\colon A^{t}\isoto
\exts^{1}\be(A,\G_{m,\le S})$ and $\kappa_{\lbe A}\colon A\isoto A^{tt}$ in $\sfs$.

\begin{remark} The proof of the (generalized) Barsotti-Weil formula in \cite[Theorem 18.1]{o2} makes use of the same formula over a field $k$ \cite[VII, \S16, Theorem 6, p.~184]{ser}. Since the latter reference only proves the formula over an algebraic closure of $k$ (rather than over a separable closure, which is necessary for Galois\e-\le descending the formula to $k$), the proof of \cite[Theorem 18.1]{o2} is, in fact, incomplete. However, this issue is easily resolved, as explained in \url{https://mathoverflow.net/questions/24429}. Further, the restrictions imposed on $S$ and $A$ in \cite[III.18-20]{o2} are only needed to guarantee the existence of $A^{t}$. Since $A^{t}$ is known to exist in all cases by \cite[Theorem 1.9]{fc}, the restrictions alluded to above are unnecessary.
\end{remark}

\begin{lemma}\label{ker-cok} If $\mathcal A$ is an abelian category and $f$ and $g$ are morphisms in $\mathcal A$ such that $g\be\circ\!\be f$ is defined, then there exists a canonical exact sequence in $\mathcal A$ 
\[
0\to\krn f\to\krn\lbe(\e g\be\circ\!\be f\e)\to\krn g\to\cok f\to\cok\be(\e g\be\circ\!\be f\e)\to\cok g\to 0.
\]
\end{lemma}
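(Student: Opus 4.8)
The plan is to prove the six-term exact sequence by the standard "snake lemma applied to a diagram built from kernel/cokernel sequences" argument, which works in any abelian category. Since $g\circ f$ is defined, we have morphisms $f\colon A\to B$ and $g\colon B\to C$ for suitable objects $A, B, C$ of $\mathcal A$, and we want to relate $\krn f$, $\krn g$, $\krn(g\circ f)$ and the corresponding cokernels.

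First I would record the two short exact sequences arising from the canonical factorizations of $f$ and $g$ through their images: $0\to\krn f\to A\to\img f\to 0$ and $0\to\img f\to B\to\cok f\to 0$ (and similarly for $g$). The cleanest route, however, is to build a single commutative diagram with exact rows
\[
\begin{CD}
0 @>>> A @>{f}>> B @>>> \cok f @>>> 0\\
@. @VV{g\circ f}V @VV{g}V @VV{}V @.\\
0 @>>> \img g @>>> C @>>> \cok g @>>> 0,
\end{CD}
\]
where the left vertical map is the corestriction of $g\circ f$ to $\img g$ (which exists since $\img(g\circ f)\subseteq\img g$), and the right vertical map is the induced one on cokernels. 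One checks the two squares commute. Then the snake lemma yields a six-term exact sequence
\[
0\to\krn(g\circ f)\to\krn g\to\krn(\cok f\to\cok g)\to\cok(g\circ f)\to\cok(g|^{\img g}_{B})\to\cok(\cok f\to\cok g)\to 0.
\]
The remaining work is to identify these middle and outer terms. The key identifications are: $\cok(g|^{\img g})$, the cokernel of $B\to\img g$, is zero since $B\to\img g$ is an epimorphism; $\krn(g\circ f)\to\krn g$ has kernel $\krn f$ (the composite $\krn f\hookrightarrow A\to B$ lands in $\krn g$, giving the inclusion $\krn f\hookrightarrow\krn(g\circ f)$ as the first map, and exactness at $\krn(g\circ f)$ follows from a diagram chase); $\krn(\cok f\to\cok g)\cong\krn g/\img f$ via the evident comparison; and $\cok(\cok f\to\cok g)\cong\cok g$. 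Splicing $0\to\krn f\to\krn(g\circ f)$ onto the front and using $\cok(g|^{\img g})=0$ to truncate produces exactly the claimed sequence.

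Alternatively, and perhaps more transparently, I would apply the snake lemma to
\[
\begin{CD}
0 @>>> \img f @>>> B @>>> \cok f @>>> 0\\
@. @VV{\bar g}V @VV{g}V @VV{0}V @.\\
0 @>>> \img g @>>> C @>>> \cok g @>>> 0,
\end{CD}
\]
getting $0\to\krn\bar g\to\krn g\to\cok f\to\cok\bar g\to\cok g\to 0$, then observe $\krn\bar g=\img f\cap\krn g$ and use the two sequences $0\to\krn f\to\krn(g\circ f)\to\krn\bar g\to 0$ and $0\to\cok\bar g\to\cok(g\circ f)\to\cok f\to 0$ (the latter because $C/\img(g\circ f)=C/\img\bar g$ surjects onto $C/\img g=\cok g$... careful: I want $\cok(g\circ f)=C/\img(gf)$ and $\img(gf)=\img\bar g\subseteq\img g$, and $\img\bar g$ is the image of $\img f$, so $\cok\bar g=\img g/\img\bar g$ fits in $0\to\cok\bar g\to\cok(g\circ f)\to\cok g\to 0$). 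Reassembling these three short exact sequences along the snake sequence gives the result.

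\textbf{The main obstacle} is purely bookkeeping: making sure every object is canonically identified so that the resulting sequence is genuinely canonical (functorial in the pair $(f,g)$), and checking that the connecting maps from the snake lemma agree with the "obvious" maps between the kernels and cokernels. There is no hard mathematics here — it is the kind of statement one would ordinarily cite — but a careful writeup must verify commutativity of the relevant squares and perform the (routine) diagram chases establishing exactness at each of the six spots. For a fully formal treatment one could instead invoke the Freyd–Mitchell embedding theorem to reduce to modules and chase elements, or cite the general principle that such "kernel–cokernel exact sequences" hold in any abelian category; I would likely just present the snake-lemma diagram and leave the identification of terms to the reader.
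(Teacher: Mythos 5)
The paper does not actually prove this lemma: its ``proof'' is the single line ``See, for example, [Bey, 1.2]'', a citation to Beyl's article on the kernel--cokernel sequence. So any complete argument is necessarily a different route, and your second diagram is in substance the standard snake-lemma proof and does work: with $\bar g\colon \img f\to\img g$ the restriction/corestriction of $g$, the induced right-hand vertical $\cok f\to\cok g$ is the zero map, the snake degenerates to the exact sequence $0\to\krn\bar g\to\krn g\to\cok f\to\cok\bar g\to 0$ (plus the identity $\cok g\to\cok g$), and splicing this with $0\to\krn f\to\krn(gf)\to\krn\bar g\to 0$ and $0\to\cok\bar g\to\cok(gf)\to\cok g\to 0$ (both of which you identify correctly in your parenthetical self-correction) yields exactly the six-term sequence, with the spliced maps agreeing with the canonical ones.

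Two slips should be repaired, though. In your first diagram the top row $0\to A\to B\to\cok f\to 0$ is exact only when $f$ is a monomorphism, and the terms you read off from that snake are misidentified: the cokernel of the left vertical $A\to\img g$ is $\img g/\img(gf)$, not $\cok(gf)$; the cokernel of the middle vertical $g\colon B\to C$ is $\cok g$, not $\cok(B\to\img g)=0$; and, most importantly, any map $\cok f\to\cok g$ induced by $g$ is automatically zero (everything in the image of $g$ dies in $\cok g$), so $\krn(\cok f\to\cok g)$ is all of $\cok f$ rather than $\krn g/\img f$ --- the identifications claimed there are false, and that route as written does not assemble into the lemma. The same point mildly affects your second approach: since the right vertical is $0$, the snake output is not the displayed sequence ending in $\cok\bar g\to\cok g\to 0$ (which is not exact at $\cok g$), but the degenerate form above; the arrow $\cok(gf)\to\cok g$ of the lemma comes from the subsequent splicing, not from the snake. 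With these corrections your second argument is a complete and standard proof, and the remaining bookkeeping you flag (canonicity of the maps) is precisely the sort of point Beyl's cited paper addresses.
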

\begin{proof} See, for example, \cite[1.2]{bey}.
\end{proof}

\smallskip

If $k$ is a field, a $k\le$-scheme $X$ will be called {\it algebraic} (respectively, {\it locally algebraic}) if it is of finite type (respectively, locally of finite type) over $k$.

\begin{lemma} \label{triv} Let $k$ be a field, let $G$ be a smooth, connected and affine algebraic $k$-group and let $H$ be a proper $k$-group. Then $\Hom_{\e\text{$k$-\le\rm gr}}\le(G,H\lle)$ is trivial.
\end{lemma}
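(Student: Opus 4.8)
The plan is to show that any $k$-group homomorphism $\varphi\colon G\to H$ is trivial by analyzing its scheme-theoretic image. First I would recall that the scheme-theoretic image $\varphi(G)$ is a closed $k$-subgroup scheme of $H$, hence proper over $k$ (being a closed subscheme of a proper $k$-scheme). On the other hand, $\varphi(G)$ is the image of a smooth, connected, affine $k$-group under a homomorphism, so it is itself smooth, connected and affine: smoothness and connectedness pass to the image because $G$ is smooth and connected (one may work over $\kb$ and use that a quotient of a smooth connected group is smooth connected), and affineness follows since $\varphi(G)$ is a quotient group $G/\krn\varphi$ of an affine algebraic group, which is again affine by a standard result on quotients of affine algebraic groups.

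The heart of the argument is then the fact that a $k$-group scheme that is simultaneously affine and proper over $k$ must be finite over $k$ (it is both affine and proper, hence the structure morphism is affine and proper, thus finite). Combined with smoothness and connectedness this forces $\varphi(G)$ to be the trivial group: a finite, smooth, connected $k$-group scheme is $\spec k$, since after base change to $\kb$ it becomes a finite connected reduced $\kb$-scheme, which is a single reduced point. Therefore $\varphi$ factors through the trivial subgroup, i.e.\ $\varphi$ is the zero morphism, and $\Hom_{\e\text{$k$-\le\rm gr}}\le(G,H\lle)$ is trivial.

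I expect the main obstacle to be the bookkeeping around the image: one should be slightly careful that ``image'' is taken in the scheme-theoretic sense and that the relevant permanence properties (affineness of a quotient of an affine algebraic group; closedness and hence properness of the image in a proper target; preservation of smoothness and connectedness) are invoked correctly, ideally after base change to an algebraic or separable closure where classical structure theory of algebraic groups applies directly and then descending the conclusion. An alternative, essentially equivalent route avoids images entirely: restrict $\varphi$ to a maximal torus and to unipotent subgroups of $G_{\kb}$, noting that $H_{\kb}$, being proper, is an extension related to an abelian variety with no nontrivial affine subgroups apart from finite ones, so tori and unipotent groups must die; since $G_{\kb}$ is generated by its maximal torus together with its unipotent part (it is smooth connected affine), $\varphi_{\kb}$ is trivial, hence so is $\varphi$. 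Either way the key input is the clash between affineness on the source side and properness on the target side, which only finite group schemes can satisfy, together with smoothness and connectedness to kill the finite part.
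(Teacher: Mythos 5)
Your proposal is correct and follows essentially the same route as the paper: identify the image of the homomorphism with the quotient $G/\krn f$, observe it is a closed subgroup of $H$ (hence proper), affine as a quotient of an affine algebraic group, therefore finite, and then smooth plus connected forces it to be trivial. The paper cites SGA3 for the factorization through $(\krn f)\backslash G$ as a faithfully flat map followed by a closed immersion and for affineness of the quotient, but the logical skeleton is identical to yours.
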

\begin{proof} Any $f\in\Hom_{\e\text{$k$-\le\rm gr}}\le(G,H\lle)$ factors as $G\overset{p}\to(\krn f\le)\be\setminus\be G\overset{\!i}{\to} H$, where $p$ is faithfully flat and of finite type and $i$ is a closed immersion \cite[${\rm VI}_{\rm A}$, Proposition 5.4.1]{sga3}. Consequently, $(\krn f\le)\!\setminus\! G$ is proper over $k$. Since $(\krn f\le)\!\setminus\! G$ is also affine over $k$ by
\cite[${\rm VI}_{\rm B}$, Theorem 11.17]{sga3}, $(\krn f\le)\!\setminus\! G$ is, in fact, finite over $k$ \cite[tag 01WG]{sp}. By, e.g., \cite[Lemma 2.54]{bga}, $(\krn f\le)\!\setminus\! G$ is also smooth and connected. Consequently, $(\krn f\le)\!\setminus\! G$ is trivial by \cite[${\rm IV}_{4}$, Corollary 17.6.2]{ega} and \cite[II,\S5, proposition 1.4, p.~236]{dg}, i.e., $f$ is the trivial morphism.
\end{proof}

\subsection{Commutative algebraic groups}
Let $k$ be a field with fixed algebraic closure $\kbar$ and let $\ksep$ be the separable closure of $k$ inside $\kbar$. If $G$ is a $k$-group and $H$ is a $k$-subgroup of $G$, then $G$ is separated over $k$ and $H$ is closed in $G$ \cite[${\rm VI}_{\rm A}$, 0.3 and Corollary 0.5.2]{sga3}. We will write $\ck$ for the category of {\it commutative} algebraic $k$-groups and $\ak$ for its full subcategory of affine $k$-groups. A morphism in $\ck$ will be called a {\it homomorphism}. By \cite[${\rm VI}_{\rm A}$, Theorem 5.4.2 and Corollary 5.4.3]{sga3}, $\ck$ and $\ak$ are abelian categories. Recall now that a {\it $k$-split unipotent $k$-group} is an algebraic $k$-group which admits a composition series whose successive quotients are each 
$k$-isomorphic to $\G_{ a,\le k}$. If $\car\le k=0$, then every $G\in\ck$ is smooth and every unipotent $k$-group is $k$-split \cite[${\rm VI}_{\rm B}$, Corollary 1.6.1, and XVII, Theorem 3.5, (i)$\be\implies\!$(ii)]{sga3}.

\begin{definition} For every $G\in\ck$, we will write $\s E_{\le G}$ for the set of all exact sequences in $\ck$ of the form
\begin{equation}\label{bex2}
\mathcal E(L,\iota, G,\pi, A)\colon 0\to L\overset{\!\iota}{\to} G\overset{\!\pi}{\to} A\to 0,
\end{equation}
where $L\in\ak$ is affine and $A$ is an abelian $k$-variety. By \cite[Theorem 2.3(i)]{bri17b}, $\s E_{\le G}$ is non\e-\le empty. 
\end{definition}

If $\mathcal E_{r}=\mathcal E(L_{r},\iota_{r}, G,\pi_{r}, A_{i})\in\s E_{\le G}$, where $r=1$ or $2$, we define  $\mathcal E_{1} \leq \mathcal E_{2}$ if there exist morphisms $\lambda\colon L_{1}\to L_{2},\gamma\colon G\to G$ and $\alpha\colon A_{1}\to A_{2}$ in $\ck$ such that the diagram 
\[
\xymatrix{0\ar[r]&L_{1}\ar[d]^(.45){\lambda}\ar[r]^{\iota_{1}}&G\ar[d]^(.45){\gamma}\ar[r]^{\pi_{1}}&A_{1}\ar[d]^(.45){\alpha}
\ar[r]&0\\
0\ar[r]&L_{2}\ar[r]^{\iota_{2}}&G\ar[r]^{\pi_{2}}&A_{2}\ar[r]&0,
}
\]
commutes. It is not difficult to check that $\leq$ is a reflexive and transitive relation, whence $(\s E_{\le G},\leq\le)$ is a pre\e-\le ordered set. Further, $\s E_{\le G}$ is a {\it directed set}, as the following variant of \cite[proof of Theorem 2.3]{bri17b} shows. Let $\mathcal E_{r}=\mathcal E(L_{\le r},\iota_{r}, G,\pi_{r}, A_{\le r})\in\s E_{\le G}$, where $r=1$ or $2$, and consider the composition
\[
\varphi\colon L_{1}\!\times\! L_{2}\overset{\iota_{1}\be\times \iota_{2}}{\to}G\times G\overset{\!+}{\to} G.
\]
Set $L_{3}=\img\varphi$, $A_{3}=G/L_{3}$ and let $\iota_{3}\colon L_{3}\to G$ and $\pi_{3}\colon G\to A_{3}$ be, respectively, the inclusion and the canonical projection. For $r=1$ or $2$, let $\lambda_{\le r}\colon L_{\le r}\to L_{3}$ be the composition of canonical maps $L_{r}\into L_{1}\times L_{2}\onto L_{3}$. Clearly, $\iota_{r}=\iota_{3}\circ\lambda_{\le r}$, whence $\img \iota_{r}\subseteq L_{3}$. Now let $\alpha_{\lle r}\colon A_{\le r}\onto A_{3}$ be defined by the commutativity of the diagram
\[
\xymatrix{A_{r}\ar[r]^{\alpha_{r}}&A_{3}\,,\\
G/\img \iota_{r}\ar[u]^{\overline{\pi}_{r}}_{\simeq}\ar@{->>}[ur]&
}
\]
where the oblique map is induced by the identity morphism of $G$. Then the following diagram in $\ck$ commutes
\[
\xymatrix{0\ar[r]&L_{r}\ar@{^{(}->}[d]^(.45){\lambda_{\le r}}\ar[r]^{\iota_{r}}&G\ar@{=}[d]\ar[r]^{\pi_{r}}&A_{\le r}\ar@{->>}[d]^(.45){\alpha_{\lle r}}\ar[r]&0\\
0\ar[r]&L_{3}\ar[r]^{\iota_{3}}&G\ar[r]^{\pi_{3}}&A_{3}\ar[r]&0.
}
\]
Thus $\mathcal E_{3}\defeq\mathcal E(L_{3},\iota_{3}, G,\pi_{3}, A_{3})\in\s E_{\le G}$ and 
$\mathcal E_{\le r}\leq \mathcal E_{3}$ for $r=1$ and $2$, i.e., $\s E_{\le G}$ is a directed set, as claimed.

\smallskip

If $G\in\ck$ is {\it connected}, then $\s E_{\le G}$ contains a {\it distinguished object}. Indeed, by \cite[${\rm VII}_{\rm A}$, Proposition 8.3]{sga3}, there exists a minimal integer $n\geq 0$ such that the image $G_{\sm}$ of the $n\e$-\le fold relative Frobenius morphism ${\rm Fr}^{\le n}\colon G\to G^{\e(\e p^{\lle n}\lle)}$ is smooth over $k$. Thus $G_{\sm}$ is smooth and connected, whence there exist an abelian $k$-variety $\Alb(G_{\sm}\lbe)$ and a homomorphism $\alb_{\e G_{\sm}}\colon G_{\sm}\onto \Alb(G_{\sm}\lbe)$ such that every homomorphism from $G_{\sm}$ to an abelian variety factors uniquely through $\alb_{\le G_{\sm}}$. Further, $\krn\alb_{\le G_{\sm}}$ is affine and connected. See \cite[Proposition 4.1.4 and Theorem 4.3.4]{bri17a}. Now let $q$ be the composition $G\onto G_{\sm}\overset{\lle\alb_{\lle G_{\sm}}}{\lra}\Alb(G_{\sm}\lbe)$. Then $q$ is surjective and $L(G\le)\defeq\krn q$ is an extension $0\to \krn {\rm Fr}^{\le n}\to L(G\le)\to \krn\alb_{\e G_{\sm}}\to 0$, where the second map is an inclusion and the  third map is induced by ${\rm Fr}^{\le n}$. We call the sequence 
\begin{equation}\label{seq:frob-alb}
0\to L(G\le)\to G\overset{\be q}{\to} \Alb(G_{\sm}\lbe)\to 0
\end{equation}
the {\it Frobenius-Albanese decomposition of $G$}.

\smallskip

If $G\in\ck$ is {\it smooth and connected}, i.e., $n=0$ above, then \eqref{seq:frob-alb} is the {\it Albanese decomposition of $G$}:
\begin{equation}\label{seq:alb-dec}
0\to\krn\be(\alb_{\le G})\to G\,\overset{\!\!\be\alb_{G}}{\lra}\Alb(G\le)\to 0.
\end{equation}
The preceding sequence is an {\it initial} object of $\s E_{\le G}$. Indeed, for every $\mathcal E(L,\iota, G,\pi, A)\in\s E_{\le G}$, there exists a unique homomorphism $\alpha\colon \Alb(G\le)\to A$ such that the diagram
\begin{equation}\label{diag:alb-diag}
\xymatrix{0\ar[r]&\krn\be(\alb_{\le G})\ar@{^{(}->}[d]\ar[r]&G\ar@{=}[d]\ar[rr]^(.45){\alb_{\le G}}&&\Alb(G\le)\ar@{->>}[d]^(.45){\alpha}\ar[r]&0\\
0\ar[r]&L\ar[r]^{\iota}&G\ar[rr]^(.45){\pi}&&A\ar[r]&0
}
\end{equation}
commutes. Consequently, the top row of the above diagram, i.e., the sequence \eqref{seq:alb-dec}, is an initial object of $\s E_{\le G}$, as claimed. We also note that, if the bottom row of \eqref{diag:alb-diag} is a {\it Chevalley decomposition of $G$}, i.e., $L$ is {\it smooth and connected}, then $L/\le\krn\be(\alb_{\le G})\simeq \krn\alpha$ is smooth, connected, affine and proper, hence trivial by Lemma \ref{triv}\,. Thus, if $G$ is smooth, connected and admits a Chevalley decomposition, then it admits a unique such decomposition which agrees with the Albanese decomposition of $G$ \eqref{seq:alb-dec}.

\begin{example} Let $k$ be an imperfect field and let $X$ be a projective $k$-variety with a finite non\e-\le normal locus $\spec A$ and normalization morphism  $\nu\colon X^{\be N}\to X$. If $X^{\be N}$ is {\it geometrically normal}, then ${\rm Pic}_{\lbe X^{\lbe N}\!/k}^{\le \sm,\e 0}={\rm Pic}_{\lbe X^{\lbe N}\!/k}^{\le 0,\le\red}$ is an abelian $k$-variety and \cite{bri15} yields the following Chevalley decomposition of $\picom$:
\[
0\to R_{\e B/k}(\G_{m\lle,\lle B}\lbe)/R_{\e A/k}(\G_{m\lle, A})\to\picom\to {\rm Pic}_{\lbe X^{\lbe N}\!/k}^{\le 0,\le\red}\to  0,
\]
where $B=\nu^{-1}(A)$.
\end{example}

\smallskip

We now assume, for the remainder of this subsection, that $\car k>0$. Let $G\in\ck$ be arbitrary and let $\mathcal E(L,\iota, G,\pi, A)$ be a fixed object of $\s E_{\le G}$. By \cite[Lemmas 5.6.1 and 5.6.2]{bri17a}, $G$ is an extension in $\ck$
\[
0\to G_{\sab}\to G\to Q\to 0,
\]
where $G_{\sab}$ is the maximal semiabelian subvariety of $G$ and 
\[
Q\defeq G/G_{\sab}
\]
is affine\,\footnote{\e This fails if $\car k=0$. See \cite[Remark 5.6.5]{bri17a}.} and contains no nonzero $k$-subtorus. Let $T=\krn \alb_{\le G_{\sab}}$ be the maximal $k$-subtorus of $G_{\sab}$ and let $B=G_{\sab}/\e T=\Alb\e(G_{\sab})$ be the corresponding abelian variety quotient. Since every homomorphism from a $k$-torus to an abelian $k$-variety is trivial (see \cite[Lemma 2.1]{cou} and \cite[Proposition 3.10]{mav}), $T$ is also the maximal $k$-subtorus of $L$. Set
$R=G_{\sab}\times_{G}L$ and $F=\alb_{\le G_{\sab}}(R\le)\subseteq B$. Note that, since $R\subseteq L$ is affine and $B$ is proper over $k$, $F$ is a finite $k$-group \cite[tag 01WG]{sp}. Now, since $T\times_{G_{\sab}}R=T\times_{G}L=T$, we have a canonical commutative diagram in $\ck$
\begin{equation}\label{td0}
\xymatrix{0\ar[r]&T\ar[r]\ar@{=}[d]& R\ar[r]\ar@{^{(}->}[d]&F
\ar@{^{(}->}[d]\ar[r]&0\\
0\ar[r]&T\ar[r]&G_{\sab}\ar[r]&B\ar[r]&0.
}
\end{equation}
On the other hand, there exists a canonical exact and commutative diagram in $\ck$
\begin{equation}\label{td1}
\xymatrix{0\ar[r]&R\ar[r]\ar@{^{(}->}[d]& G_{\sab}\ar[r]\ar@{^{(}->}[d]&A^{\prime}
\ar@{^{(}->}[d]\ar[r]&0\\
0\ar[r]&L\ar@{->>}[d]\ar[r]&G\ar[r]^{\pi}\ar@{->>}[d]&A\ar@{->>}[d]\ar[r]&0\\
0\ar[r]&L/R\ar[r]&Q\ar[r]&A/A^{\prime}\ar[r]&0,
}
\end{equation}
where $A^{\prime}=\pi(G_{\sab})$. By \cite[Lemma 2.54]{bga}, \cite[II, Corollary 5.4.3(ii)]{ega} and \cite[${\rm VI}_{\rm A}$, 0.3]{sga3}, $A/A^{\prime}$ is proper, smooth and connected, i.e., an abelian variety. On the other hand, since $Q$ is affine, its quotient $A/A^{\prime}$ is also affine. Therefore $A=A^{\prime}$ and \eqref{td1} is a diagram
\begin{equation}\label{epi}
\xymatrix{0\ar[r]&R\ar[r]\ar@{^{(}->}[d]^{\jmath}& G_{\sab}\ar[r]\ar@{^{(}->}[d]&A
\ar@{=}[d]\ar[r]&0\\
0\ar[r]&L\ar@{->>}[d]\ar[r]&G\ar[r]\ar@{->>}[d]&A\ar[r]&0\e.\\
&Q\ar@{=}[r]&Q&&
}
\end{equation}
In particular, $L/R\isoto Q$. Since $R/\le T\isoto F$ by the exactness of the top row of diagram \eqref{td0}, we conclude that the $k$-group $P\defeq L/\le T$ fits into an exact sequence
\begin{equation}\label{fpq}
0\to F\to P\to Q\to 0
\end{equation}
which is isomorphic to the canonical exact sequence $0\to R/\le T\to L/\le T\to L/R\to 0$. We now consider the exact and commutative diagram in $\ck$
\begin{equation}\label{epi2}
\xymatrix{0\ar[r]&T\ar[r]^{\iota|_{T}}\ar@{^{(}->}[d]& G_{\sab}\ar[r]^{\alb_{\lle G_{\sab}}}\ar@{=}[d]&B
\ar@{->>}[d]^(.45){\psi}\ar[r]&0\\
0\ar[r]&R\ar[r]\ar@{^{(}->}[d]& G_{\sab}\ar[r]^{\pi|_{G_{\sab}}}\ar@{^{(}->}[d]&A
\ar@{=}[d]\ar[r]&0\\
0\ar[r]&L\ar[r]^{\iota}&G\ar[r]^{\pi}&A\ar[r]&0,
}
\end{equation}
whose bottom half is part of diagram \eqref{epi} and the map $\psi$ exists and is uniquely determined by the identity $\psi\circ\alb_{\lle G_{\sab}}=\pi|_{G_{\sab}}$ (since $B=\Alb\e(G_{\sab})$). Note that, by the snake lemma and the exactness of the top row of diagram \eqref{td0}, the top half of diagram \eqref{epi2} induces isomorphisms $\krn\psi\isoto R/\lle T\isoto F$, whence $\psi$ is an isogeny of abelian $k$-varietes. Further, \eqref{epi2} induces a morphism of extensions
\begin{equation}\label{mext}
(\lambda,\gamma,\psi\e)\colon\mathcal E(T,\iota|_{T},G_{\sab},\alb_{\le G_{\sab}}, B\e)\to \mathcal E(L,\iota, G,\pi, A),
\end{equation}
where $\lambda\colon T\into L$ and $\gamma\colon G_{\sab}\into G$ are inclusions.

\subsection{Topological preliminaries}\label{tops} Let $G$ be an abelian topological group. If $S$ is a subset of $G$, the closure of $S$ in $G$ will be denoted by ${\rm cl}_{\e G}(\be S\lle)$ (or by $\overline{S}$ if the ambient group $G$ is clear from the context). Recall that $G$ is called {\it first countable} (respectively, {\it second countable}) if $G$ has a countable basis at $0$ (respectively, a countable basis). Recall also that a morphism of topological groups $u\colon G\to H$ (i.e., a continuous homomorphism) is called {\it strict}\,\,\footnote{\e Note that in \cite{str}, which is one of our references for the theory of locally compact abelian topological groups, strict morphisms are called {\it proper} \cite[Definition 20.14]{str}.} (or {\it continuous and strict}, for emphasis) if the induced map $G/\e\krn u\to\img u$ is an isomorphism of topological groups, where $G/\e\krn u$ is equipped with the quotient topology and $\img u\subset H$ is equipped with the subspace topology. Equivalently, $u$ is strict if the map $G\to \img u$ induced by $u$ is open \cite[\S III.2.8, Proposition 24, p.~236]{bou}\e.

\begin{lemma}\label{lem:stc} Let $u\colon G\to H$ be a morphism of abelian topological groups. 
\begin{enumerate}
\item[(i)] If $G$ is compact and $H$ is Hausdorff, then $u$ is strict.
\item[(ii)] If $G$ is locally compact and second countable, $H$ is locally compact and Hausdorff and $\img u$ is closed in $H$, then $u$ is strict.
\end{enumerate}	
\end{lemma}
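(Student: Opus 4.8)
The plan is to factor $u$ through the induced continuous bijective homomorphism $\bar u\colon G/\lbe\krn u\to\img u$ and to verify in each case that $\bar u$ is open, which is exactly the assertion that $u$ is strict. In both parts the hypothesis that $H$ is Hausdorff guarantees that $\krn u=u^{\le-1}(\{0\})$ is a closed subgroup of $G$, so that $G/\lbe\krn u$ is a Hausdorff topological group and the quotient map $G\onto G/\lbe\krn u$ is open, continuous and surjective.

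For (i): since $G$ is compact, $G/\lbe\krn u$ is compact, while $\img u$, being a subspace of the Hausdorff space $H$, is Hausdorff. A closed subset of $G/\lbe\krn u$ is compact, so its image under $\bar u$ is compact and hence closed in $H$, a fortiori in $\img u$. Thus $\bar u$ is a closed continuous bijection onto $\img u$, hence a homeomorphism, and $u$ is strict. (In passing this also shows $\img u=\bar u\le(G/\lbe\krn u\le)$ is compact, hence closed in $H$.)

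For (ii): because $\krn u$ is closed in the locally compact Hausdorff group $G$, the quotient $G/\lbe\krn u$ is again locally compact and Hausdorff, and since the quotient map is open and continuous it inherits second countability from $G$; a second countable, locally compact Hausdorff space is $\sigma$-compact. On the target side, $\img u$ is closed in $H$ by hypothesis, hence is itself locally compact and Hausdorff, and therefore a Baire space. Now the open mapping theorem for topological groups (see, e.g., \cite{str}), applied to the continuous surjective homomorphism $\bar u\colon G/\lbe\krn u\to\img u$ whose source is locally compact Hausdorff and $\sigma$-compact and whose target is a Baire space, shows that $\bar u$ is open, hence a homeomorphism; thus $u$ is strict.

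The only point requiring care is the appeal to the open mapping theorem in (ii) and the checking of its hypotheses — in particular, that closedness of $\img u$ is precisely what supplies the Baire property needed on the target (a dense proper subgroup would make $\bar u$ a continuous bijection that is not open), and that local compactness together with second countability supplies the $\sigma$-compactness needed on the source. The remaining verifications — closedness of $\krn u$, and stability of local compactness, second countability and $\sigma$-compactness under the quotient by $\krn u$ — are routine.
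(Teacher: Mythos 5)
Your proof is correct and takes essentially the same route as the paper: part (i) is the standard compact-to-Hausdorff continuous-bijection argument (for which the paper simply cites Bourbaki), and part (ii) reduces, just as in the paper's appeal to the Hewitt--Ross open mapping theorem, to an open mapping theorem for a continuous surjective homomorphism from a $\sigma$-compact locally compact group onto the closed (hence locally compact, Hausdorff, Baire) subgroup $\img u$ of $H$, with your only cosmetic difference being that you first pass to the Hausdorff quotient $G/\krn u$ and use the Baire-category form of the theorem. The lone blemish is your phrase ``the locally compact Hausdorff group $G$'' in (ii) --- $G$ is not assumed Hausdorff --- but this is harmless, since Hausdorffness of $G/\krn u$ already follows from the closedness of $\krn u=u^{-1}(\{0\})$, which you correctly deduced from $H$ being Hausdorff.
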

\begin{proof} For (i), see \cite[III, \S2.8, Remark 1, p.~237]{bou}. In (ii), $\img u$ is locally compact and Hausdorff by \cite[Lemma 1.20(b)]{str}. Thus, by \cite[Theorem 5.29 and (5.30)]{hr}, the map $G\to \img u$ induced by $u$ is open, which yields (ii).
\end{proof}

An exact sequence of abelian topological groups $\dots\to A_{\le i}\overset{\!f_{\lbe i}}{\to} A_{\le i+1}\overset{\!f_{\lbe i+1}}{\to} A_{\le i+2}\to\dots$ is called {\it strict exact} if every map $f_{\lbe i}$ is strict. A {\it topological extension} is a short strict exact sequence of abelian topological groups. Every such sequence is homeomorphic to an exact sequence of abelian topological groups of the form $0\to H\to G\to G\be/\be H\to 0$, where the second map is the inclusion of a subgroup $H$ of $G$ equipped with the subspace topology and $G\be/\be H$ is equipped with the quotient topology. Recall that $G\be/\be H$ is Hausdorff if, and only if, $H$ is closed in $G$ \cite[Proposition 6.6]{str}.

\smallskip

An abelian topological group $G$ is called {\it profinite} if $G$ is homeomorphic to an inverse limit of finite and discrete groups equipped with the inverse limit topology.  Equivalently, $G$ is Hausdorff, compact and totally disconnected \cite[tag 08ZW]{sp}. If $G$ is an abelian topological group, the {\it profinite completion of $G$} is the profinite group $G^{\le\wedge}=\varprojlim G\lbe/\lbe U$, where $U$ ranges over the family of open subgroups of finite index of $G$. The map $c_{\le G}\colon G\to G^{\le\wedge}$ induced by the projections $G\to G/U$ is continuous with dense image, i.e., ${\rm cl}_{\le G^{\lle\wedge}}\be(\le \img c_{\le G})=G^{\le\wedge}$ \cite[p.~2 and Lemma 1.1.7]{rz}, and has the following universal property: if $u\colon G\to H$ is a morphism of abelian topological groups, where $H$ is profinite, then there exists a unique morphism of profinite abelian topological groups $u^{\wedge}\colon G^{\le\wedge}\to H$ such that $u=u^{\wedge}\circ c_{\le G}$. The functor $G\mapsto G^{\le\wedge}$ is right-exact in the following sense: if $0\to F\to G\to H\to 0$ is a {\it strict} exact sequence of abelian topological groups, then $F^{\le\wedge}\to G^{\le\wedge}\to H^{\lbe\wedge}\to 0$ is a strict exact sequence of profinite abelian topological groups. See Lemma \ref{lem:stc}(i) and \cite[Proposition C.1.4]{ros18}. If $G$ is profinite, then $c_{\le G}\colon G\isoto G^{\le\wedge}$ is a topological isomorphism \cite[Corollary 1, p.~118]{gru}. Henceforth, every profinite group $G$ will be identified with its profinite completion $G^{\le\wedge}$ via the map $c_{\le G}$.

An abelian topological group $G$ is called {\it locally profinite} if $G$ is Hausdorff, locally compact and totally disconnected. Equivalently, every neighborhood of $0\in G$ contains a profinite open subgroup. 

\begin{lemma} \label{lem:eas} Let $u\colon G\to H$ be a morphism of abelian topological groups and let $c_{\le H}\colon H\to H^{\wedge}$ be the canonical map. Then $\img\be(\lbe u^{\wedge}\lbe)={\rm cl}_{H^{\lbe\wedge}}\lbe( c_{\lle H}(\le\img u)\le)$.
\end{lemma}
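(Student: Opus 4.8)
The plan is to deduce this purely formally from three facts already available: the defining compatibility of $u^{\wedge}$ with the completion maps, the density of $\img c_{\le G}$ in $G^{\le\wedge}$, and the fact that $G^{\le\wedge}$ is compact while $H^{\le\wedge}$ is Hausdorff. First I would make the hinge of the argument explicit: since $H^{\le\wedge}$ is profinite, the universal property of the profinite completion applied to the morphism $c_{\le H}\circ u\colon G\to H^{\le\wedge}$ produces $u^{\wedge}\colon G^{\le\wedge}\to H^{\le\wedge}$ as the unique morphism of profinite abelian topological groups with $u^{\wedge}\circ c_{\le G}=c_{\le H}\circ u$. Evaluating both sides on all of $G$ gives the identity $u^{\wedge}\big(\img c_{\le G}\big)=c_{\le H}\big(\img u\big)$, which is what links the two sides of the asserted equality.

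For the inclusion $\img(u^{\wedge})\subseteq {\rm cl}_{H^{\wedge}}\big(c_{\le H}(\img u)\big)$, I would use that $\img c_{\le G}$ is dense in $G^{\le\wedge}$ (recorded in the excerpt as ${\rm cl}_{G^{\lle\wedge}}(\img c_{\le G})=G^{\le\wedge}$) together with the continuity of $u^{\wedge}$, so that
\[
\img(u^{\wedge})=u^{\wedge}\big({\rm cl}_{G^{\wedge}}(\img c_{\le G})\big)\subseteq {\rm cl}_{H^{\wedge}}\big(u^{\wedge}(\img c_{\le G})\big)={\rm cl}_{H^{\wedge}}\big(c_{\le H}(\img u)\big),
\]
using the standard fact that a continuous map sends the closure of a set into the closure of its image.

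For the reverse inclusion, I would invoke compactness: $G^{\le\wedge}$ is profinite, hence compact, so $\img(u^{\wedge})=u^{\wedge}(G^{\le\wedge})$ is a compact subset of the Hausdorff group $H^{\le\wedge}$ and therefore closed. Since it contains $u^{\wedge}(\img c_{\le G})=c_{\le H}(\img u)$, it must contain ${\rm cl}_{H^{\wedge}}\big(c_{\le H}(\img u)\big)$, and combining the two inclusions yields the claim. The argument involves no real obstacle; the only point I would be careful to state precisely is the compatibility $u^{\wedge}\circ c_{\le G}=c_{\le H}\circ u$, since both inclusions rest on it.
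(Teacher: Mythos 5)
Your proposal is correct and follows essentially the same route as the paper's proof: both rest on the compatibility $u^{\wedge}\circ c_{\le G}=c_{\le H}\circ u$, the density of $\img c_{\le G}$ in $G^{\le\wedge}$ together with continuity of $u^{\wedge}$ for one inclusion, and the fact that $\img(u^{\wedge})$ is compact (hence closed) in the Hausdorff group $H^{\le\wedge}$ for the other. The only cosmetic difference is that the paper deduces compactness of the image via strictness of $u^{\wedge}$ (its Lemma on compact-to-Hausdorff morphisms) and writes the argument as a single chain of inclusions, whereas you argue the two inclusions separately directly from compactness of $G^{\le\wedge}$.
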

\begin{proof} Since $u^{\wedge}\colon G^{\le\wedge}\to H^{\lle\wedge}$ is strict by Lemma \ref{lem:stc}(i), $\img(\lbe u^{\wedge}\lbe)$ is a compact subgroup of $H^{\lle\wedge}$. Therefore $\img(\lbe u^{\wedge}\lbe)$ is closed in $H^{\lbe\wedge}$ \cite[Lemma 1.20(a) and Theorem 6.7(b)]{str}. Further, since $c_{H}\circ u=u^{\wedge}\circ c_{G}$, we have $c_{\lle H}(\le\img u)=u^{\wedge}(\le \img c_{\le G})\subseteq \img\be(\lbe u^{\wedge}\lbe)$. Consequently,
\[
\begin{array}{rcl}
{\rm cl}_{H^{\lbe\wedge}}\lbe( c_{\lle H}(\le\img u)\le)&\subseteq& {\rm cl}_{H^{\lbe\wedge}}\be(\le\img\be(\lbe u^{\wedge}\lbe))=
\img\be(\lbe u^{\wedge}\lbe)=u^{\wedge}\lbe({\rm cl}_{\le G^{\wedge}}\be(\le \img c_{\le G}))\\
&\subseteq& {\rm cl}_{H^{\lbe\wedge}}\be(u^{\wedge}\lbe(\img c_{\le G}))={\rm cl}_{H^{\lbe\wedge}}\be( c_{\lle H}(\le\img u)\le),
\end{array}
\]
which yields the lemma.
\end{proof}

\begin{lemma}\label{prev} Let
\[
\xymatrix{G\ar[d]_(.45){u}\ar@{->>}[r]^(.45){f}& K \ar[d]^(.45){v}\\
H\ar[r]^{g}& L
}
\]
be a commutative diagram in the category of abelian topological groups, where $f$ is surjective and $g$ is an open map. If $u$ is strict, then $v$ is strict.
\end{lemma}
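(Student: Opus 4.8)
The plan is to argue directly from the definition of strictness, namely that a continuous homomorphism is strict precisely when it is open onto its image. So I want to show that the map $K\to\img v$ induced by $v$ is open, given that $u$ is strict (equivalently $G\to\img u$ is open), $f$ is surjective, and $g$ is open onto $L$ (here ``$g$ is an open map'' in the hypothesis; I will use it as: $g$ carries open subsets of $H$ to open subsets of $L$, and in particular the composite maps behave well). First I would fix an open neighbourhood $W$ of $0$ in $K$ and seek an open neighbourhood $V$ of $0$ in $L$ with $V\cap\img v\subseteq v(W)$.

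The key chain of inclusions is the following. Since $f$ is surjective and continuous, $f^{-1}(W)$ is an open neighbourhood of $0$ in $G$ with $f\be(f^{-1}(W))=W$. Because $u$ is strict, the induced map $G\to\img u$ is open, so there is an open neighbourhood $U$ of $0$ in $H$ with $U\cap\img u\subseteq u\be(f^{-1}(W))$. Applying the hypothesis that $g$ is open, $g(U)$ is an open neighbourhood of $0$ in $L$. I then claim $g(U)\cap\img v\subseteq v(W)$: indeed, if $\ell\in g(U)\cap\img v$, write $\ell=g(h)$ with $h\in U$ and $\ell=v(k)$ with $k\in K$; choosing $g\in G$ with $f(g)=k$ (surjectivity of $f$) and using commutativity $g\circ u=v\circ f$, we get $g(u(g))=v(f(g))=v(k)=\ell=g(h)$, so $h$ and $u(g)$ differ by an element of $\krn g$. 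The remaining point is to absorb that $\krn g$-ambiguity: replace $U$ at the outset by a smaller symmetric open neighbourhood $U_{0}$ with $U_{0}-U_{0}\subseteq U$ and run the argument so that $h-u(g)\in\krn g$ together with $h\in U_{0}$ forces $u(g)\in U_{0}-\krn g\subseteq$ (a controlled set); more cleanly, note $g(h)=g(u(g))$ already says the element $\ell$ lies in $g(U\cap (u(g)+\krn g))$, and since $u(g)\in\img u$ with $g(u(g))=\ell\in g(U)$ we may as well have chosen the representative $h=u(g)\in U\cap\img u\subseteq u(f^{-1}(W))$, whence $h=u(g')$ for some $g'\in f^{-1}(W)$, and then $\ell=v(f(g'))\in v(f(f^{-1}(W)))=v(W)$. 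Thus $V\defeq g(U)$ works, proving $K\to\img v$ is open, i.e. $v$ is strict.

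The step I expect to be the main (really the only) obstacle is the bookkeeping around the kernel of $g$ in the paragraph above: one must check that the element $h\in U$ witnessing $\ell\in g(U)$ can be taken to lie in $\img u$ (equivalently, can be replaced by $u(g)$ without leaving $U$), which is exactly where one needs either the freedom to shrink $U$ or the observation that $g(u(g))=\ell$ already lets one pick $h=u(g)$ from the start — the relation $\ell=v(k)$ and commutativity pin down a preimage in $\img u$ directly. Everything else is the formal unwinding of ``strict $=$ open onto image'' and the surjectivity of $f$.

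Note that this lemma complements Lemma \ref{prev}'s later use in conjunction with Lemma \ref{lem:stc}: in applications $f$ will typically be a quotient map (hence surjective and open) and $g$ an inclusion or a map known to be open by Lemma \ref{lem:stc}, so the hypothesis ``$g$ is an open map'' is readily verified in practice.
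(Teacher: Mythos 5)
The gap is exactly at the step you flagged and then waved away. From $g(u(g'))=\ell\in g(U)$ (writing $g'\in G$ for your preimage of $k$) you conclude that ``we may as well have chosen the representative $h=u(g')\in U$''. Nothing forces $u(g')$ to lie in $U$: you only know $h\in U$ and $h-u(g')\in\krn g$, and this discrepancy is an arbitrary element of the subgroup $\krn g$, which shrinking $U$ to a symmetric $U_{0}$ with $U_{0}-U_{0}\subseteq U$ does not control. What your argument really needs is the inclusion $g(U)\cap g(\img u)\subseteq g(U\cap\img u)$, i.e.\ that open subsets of $H$ are $\krn g\e$-saturated along $\img u$, and this fails in general. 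In fact no bookkeeping can repair it, because the statement as given is false: take $G=K=\R$, $f=\mathrm{id}$, $H=\R^{2}$, $u(t)=(t,\alpha t)$ with $\alpha$ irrational (a closed embedding, hence strict), $L=\R^{2}/\Z^{2}$ and $g$ the quotient map (open). Then $v=g\circ u$ is the irrational winding, which is injective but not open onto its image: every neighbourhood of $0$ in $\img v$ (subspace topology from the torus) contains points $v(t)$ with $|t|$ arbitrarily large, so $v((-1,1))$ is not a neighbourhood of $0$ in $\img v$. Thus $f$ is surjective, $g$ is open, $u$ is strict, yet $v$ is not strict.

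Your instinct about where the difficulty sits was right, and for what it is worth the paper's own one-line proof elides the same point (it passes from ``$u(f^{-1}(U))$ open in $\img u$ and $g$ open'' to ``$g(u(f^{-1}(U)))$ open in $g(\img u)$'', which is the identical saturation fallacy). The two places the lemma is actually invoked are safe for special reasons: in Lemma \ref{kal} the map $u$ is surjective, so $\img u=H$ and openness in $L$ suffices; in Lemma \ref{bas}(i) the map $g=q_{\lle H}$ is the Hausdorff-quotient projection, whose kernel is $\overline{\{0\}}$, and every open $W\subseteq H$ satisfies $W+\overline{\{0\}}=W$, which yields precisely the needed identity $q_{\lle H}(W\cap\img u)=q_{\lle H}(W)\cap q_{\lle H}(\img u)$. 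So the honest conclusion is that the lemma needs an extra hypothesis --- e.g.\ $u$ surjective, or $\krn g\subseteq\overline{\{0\}}$, or directly $g(U)\cap g(\img u)=g(U\cap\img u)$ for open $U$ --- under which your argument closes immediately; as written, the proposal does not prove the stated lemma.
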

\begin{proof} Let $U$ be an open subset of $K$. By the commutativity of the diagram and the surjectivity of $f$, we have $v(U\le)=v(f(f^{-1}(U\le)))=g(u(f^{-1}(U\le)))$. Now, since $u(f^{-1}(U\le))$ is open in $\img u$ and $g$ is an open map,  $v(U\le)=g(u(f^{-1}(U\le)))$ is open in $g(\img u)=v(\img f)=\img v$.
\end{proof}

If $G$ is an abelian topological group, then the abelian group $G_{\rm Haus}\defeq G/\,\overline{\{0\}}$ equipped with the quotient topology is the maximal Hausdorff quotient of $G$, i.e., if $q_{\le G}\colon G\to G_{\rm Haus}$ is the canonical projection and 
$u\colon G\to H$ is a morphism of abelian topological groups, then there exists a commutative diagram of abelian topological groups
\begin{equation}\label{diag:hq}
\xymatrix{G\ar@{->>}[r]^(.4){q_{\le G}}\ar[d]_(.45){u}& G_{\rm Haus} \ar[d]^(.5){u_{\lle\rm Haus}}\\
 H\ar@{->>}[r]^(.4){q_{\lle H}}& H_{\rm Haus}
}
\end{equation}
where $u_{\rm Haus}$ is uniquely determined by $u$. 

\begin{lemma}\label{bas} Let $u\colon G\to H$ be a morphism of abelian topological groups.
\begin{enumerate}
\item[(i)] If $u$ is strict, then $u_{\lle\rm Haus}$ is strict.
\item[(ii)] If $q_{\le G}\colon G\to G_{\rm Haus}$ is the canonical projection, then $q_{\le G}^{\le\wedge}\colon G^{\le\wedge}\to (G_{\rm Haus}\lbe)^{\le\wedge}$ is an isomorphism of profinite abelian topological groups. 
\end{enumerate}
\end{lemma}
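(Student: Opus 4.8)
The plan is to obtain part (i) as an immediate application of Lemma~\ref{prev} and to prove part (ii) by comparing the open subgroups of finite index of $G$ and of $G_{\rm Haus}$. For (i), I would feed the naturality square \eqref{diag:hq} associated to $u$ into Lemma~\ref{prev}: read the top horizontal map $q_{\le G}\colon G\to G_{\rm Haus}$ as the surjection ``$f$'' of that lemma and the bottom horizontal map $q_{\lle H}\colon H\to H_{\rm Haus}$ as the open map ``$g$'' (a quotient homomorphism of topological groups is always open). The left vertical map of \eqref{diag:hq} is $u$, which is strict by hypothesis, so Lemma~\ref{prev} yields that the right vertical map $u_{\lle\rm Haus}$ is strict, which is exactly the assertion of (i).

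For (ii), the point I would exploit is that an open subgroup $U$ of finite index in $G$ is automatically closed --- its complement is a finite union of cosets of $U$, each of which is open --- and hence contains $\overline{\{0\}}=\krn q_{\le G}$. It follows that $q_{\le G}^{-1}(q_{\le G}(U\le))=U$, that $q_{\le G}(U\le)$ is an open subgroup of index $[\e G:U\le]$ in $G_{\rm Haus}$, and that $q_{\le G}$ induces an isomorphism $G/U\isoto G_{\rm Haus}/q_{\le G}(U\le)$. Conversely, for any open subgroup $V$ of finite index in $G_{\rm Haus}$, the subgroup $q_{\le G}^{-1}(V\le)$ is open of finite index in $G$ and satisfies $q_{\le G}(q_{\le G}^{-1}(V\le))=V$. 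Thus $U\mapsto q_{\le G}(U\le)$ is an inclusion-preserving bijection between the open finite-index subgroups of $G$ and those of $G_{\rm Haus}$, compatible with the isomorphisms $G/U\isoto G_{\rm Haus}/q_{\le G}(U\le)$ (which are precisely the maps induced by $q_{\le G}$). Passing to the inverse limit over these two matched families then shows that $q_{\le G}^{\le\wedge}\colon G^{\le\wedge}\to(G_{\rm Haus})^{\le\wedge}$ is an isomorphism of profinite abelian topological groups.

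I do not expect a genuine obstacle here; the argument is routine, and the only step deserving attention is the elementary observation that an open subgroup of finite index is closed, which is what forces the profinite completion of $G$ to depend only on $G_{\rm Haus}$. If one prefers to avoid inverse limits, an alternative for (ii) is to verify that $(G_{\rm Haus})^{\le\wedge}$, equipped with the composite $c_{\le G_{\rm Haus}}\lbe\circ q_{\le G}\colon G\to(G_{\rm Haus})^{\le\wedge}$, satisfies the universal property of the profinite completion of $G$: any morphism from $G$ to a profinite (hence Hausdorff) group factors uniquely through $q_{\le G}$ by \eqref{diag:hq} and then uniquely through $c_{\le G_{\rm Haus}}$; the induced canonical isomorphism $G^{\le\wedge}\isoto(G_{\rm Haus})^{\le\wedge}$ coincides with $q_{\le G}^{\le\wedge}$ because both become $c_{\le G_{\rm Haus}}\lbe\circ q_{\le G}$ after precomposition with the dense-image map $c_{\le G}$.
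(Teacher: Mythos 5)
Your proposal is correct and follows essentially the same route as the paper: part (i) is exactly the paper's application of Lemma \ref{prev} to diagram \eqref{diag:hq} (with $q_{\le G}$ the surjection and $q_{\lle H}$ the open map), and part (ii) is the paper's observation that an open finite-index subgroup is closed, hence contains $\overline{\{0\}}$, so that $q_{\le G}$ matches the open finite-index subgroups of $G$ with those of $G_{\rm Haus}$ and induces isomorphisms on the corresponding finite quotients; you merely spell out the passage to the inverse limit (and add an optional universal-property variant) in more detail than the paper does.
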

\begin{proof} Since $q_{\le G}$ and $q_{\lbe H}$ are open surjections by \cite[Lemma 6.2(a)]{str}, assertion (i) follows by applying Lemma \ref{prev} to diagram \eqref{diag:hq}. To prove (ii) it suffices to note that, if $U$ is an open subgroup of finite index in $G$, then $U\supseteq \overline{\{0\}}$ (since $U$ is also closed) and the map $G/U\to G_{\rm Haus}/q_{\le G}(U)$ induced by $q_{\le G}$ is an isomorphism of (finite and discrete) abelian groups \cite[\S III.2.8, Corollary to Proposition 22, p.~235]{bou}.
\end{proof}

\begin{lemma} \label{nos} Let $A\overset{\! f}{\to} B\overset{\! g}{\to} C\overset{\! h}{\to}D$ be an exact sequence of abelian topological groups, where $A$ and $C$ are Hausdorff. Then
\[
A\,\overset{\! f_{\lle \rm Haus}}{\lra} B_{\le \rm Haus}\be\overset{\! g_{\le \rm Haus}}{\lra} C\overset{\! h}{\lra}D
\]	
is an exact sequence of abelian topological groups.
\end{lemma}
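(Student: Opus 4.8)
The plan is to read off what ``exact sequence of abelian topological groups'' means here: the three displayed arrows must be continuous homomorphisms and the underlying sequence of abstract abelian groups must be exact (strictness is not claimed). Continuity of $f_{\rm Haus}$ and $g_{\rm Haus}$ is part of the universal property of the maximal Hausdorff quotient recorded in diagram \eqref{diag:hq}, and $h$ is continuous by hypothesis, so the whole content reduces to exactness of the underlying abstract sequence. As a first step I would normalize notation using the Hausdorff hypotheses: in $A$ and in $C$ one has $\overline{\{0\}}=\{0\}$, so $A_{\rm Haus}=A$ and $C_{\rm Haus}=C$ with $q_{A}$ and $q_{C}$ the identity maps; applying \eqref{diag:hq} to $f$ then identifies $f_{\rm Haus}$ with $q_{B}\circ f\colon A\to B_{\rm Haus}$, and applying it to $g$ characterizes $g_{\rm Haus}\colon B_{\rm Haus}\to C$ as the unique continuous homomorphism with $g_{\rm Haus}\circ q_{B}=g$. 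Throughout I would use only that $q_{B}\colon B\to B_{\rm Haus}$ is surjective.

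For exactness at $C$: since $q_{B}$ is surjective, $\img g_{\rm Haus}=\img(g_{\rm Haus}\circ q_{B})=\img g$, and $\img g=\krn h$ by exactness of the original sequence at $C$; hence $\img g_{\rm Haus}=\krn h$. For exactness at $B_{\rm Haus}$: on one hand $\img f_{\rm Haus}=\img(q_{B}\circ f)=q_{B}(\img f)=q_{B}(\krn g)$ by exactness of the original sequence at $B$; on the other hand, for $x\in B$ one has $g_{\rm Haus}(q_{B}(x))=g(x)$, so $q_{B}(x)$ lies in $\krn g_{\rm Haus}$ exactly when $x$ lies in $\krn g$, and surjectivity of $q_{B}$ then gives $\krn g_{\rm Haus}=q_{B}(\krn g)$. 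Comparing the two computations yields $\img f_{\rm Haus}=\krn g_{\rm Haus}$, which finishes the argument.

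I do not expect a real obstacle: the proof is a short diagram chase whose only inputs are the surjectivity of the canonical projections (to transport images forward and kernels back) and the original exactness, together with the universal property of \eqref{diag:hq} for continuity. The single point that warrants care is bookkeeping --- making sure that, under the identifications $A=A_{\rm Haus}$ and $C=C_{\rm Haus}$ forced by the hypotheses, the maps in the target sequence really are the $f_{\rm Haus}$ and $g_{\rm Haus}$ produced by \eqref{diag:hq} from $f$ and $g$, while the last map is the unchanged $h$ rather than $h_{\rm Haus}$.
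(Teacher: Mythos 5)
Your proof is correct and follows essentially the same route as the paper: the Hausdorff hypotheses give $q_{A}=\mathrm{Id}_{A}$ and $q_{C}=\mathrm{Id}_{C}$, hence $f_{\rm Haus}=q_{B}\circ f$ and $g_{\rm Haus}\circ q_{B}=g$, after which exactness is a short chase using only the surjectivity of $q_{B}$ and the exactness of the original sequence. The single difference is cosmetic: where you verify $\krn g_{\rm Haus}=q_{B}(\krn g)$ directly, the paper obtains the same identity by applying the kernel--cokernel sequence (Lemma \ref{ker-cok}) to the composition $g=g_{\rm Haus}\circ q_{B}$.
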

\begin{proof} By the commutativity of \eqref{diag:hq} and the identifications $q_{\le A}={\rm Id}_{A}$ and $q_{\le C}={\rm Id}_{\le C}$, we have $q_{\lle B}\circ f=f_{\lle \rm Haus}$ and $g_{\le \rm Haus}\circ q_{B}=g$ . Now an application of Lemma \ref{ker-cok} to the pair of maps $B\overset{\! q_{B}}{\twoheadrightarrow} B_{\le \rm Haus}\overset{\! g_{\le \rm Haus}}{\lra} C$ (whose composition is $g$) shows that $\krn g_{\le \rm Haus}=\krn g/\e\krn q_{\lle B}=q_{\lle B}(\e\img f\e)=\img(\e q_{\lle B}\circ f\e)=\img f_{\lle \rm Haus}$. Finally, $\img g_{\e \rm Haus}=\img(\e g_{\le \rm Haus}\circ q_{\lle B})=\img g=\krn h$.
\end{proof}

\smallskip

A property $P$ of abelian topological groups is called an {\it extension property} if the following holds: if $G$ is an abelian topological group, $H$ is a subgroup of $G$ and $H$ and $G\be/\be H$ have property $P$, then $G$ has property $P$. In this paper we will need the following extension properties: those of being (a) Hausdorff, (b) first countable, (c) second countable, (d) profinite, (e) locally compact, (f) discrete and (g) totally disconnected. For (a) see, e.g., \url{https://mathoverflow.net/questions/175892}\e. For (b), see \cite[5.38(e)]{hr}\e. For (c), see \cite[Proposition 2.C.8(2)]{cd}\,\footnote{\e As well as \url{https://mathoverflow.net/questions/406916}.}\,. For (d)-(g), see \cite[Proposition 6.9(c) and Theorem 6.15]{str}.

We will also need the following facts: if $G$ is a locally compact abelian topological group and $H$ is a subgroup of $G$, then  $G\be/\be H$ is locally compact. Further, if $H$ is closed in $G$, then $H$ is locally compact. See \cite[Lemma 1.20(b) and Theorem 6.7(c)]{str}\,.

\smallskip

If $G$ is a locally compact abelian topological group, we let $G^{\le*}=\Hom_{\e\text{cts}}\lbe(G,\T\e)$ be its Pontryagin dual, where
$\T=\{z\in\C\colon |z|=1\}\simeq \R/\Z$ is equipped with its natural topology. We equip $G^{\le*}$ with the compact-open topology, i.e., the topology generated by subsets of the form $\{\chi\in G^{\le*}\colon \chi(\le C\le)\subseteq U \}$, where $C\subseteq G$ is compact and $U\subseteq \T$ is open \cite[Definition 9.1]{str}. Since $\mathbb T$ is Hausdorff, $G^{\le*}$ is Hausdorff as well (even if $G$ is not) \cite[Lemma 9.2(c)]{str}. Further, by \cite[Theorem 23.8]{hr} and \cite[Theorem 20.5]{str}, the canonical map $q_{\e G}^{*}\colon (G_{\lbe\rm Haus}\lbe)^{\le*}\to G^{\le*}$ is an isomorphism of Hausdorff and locally compact abelian topological groups.
We will use the following facts without further comment: (a) the Pontryagin dual of a  strict exact sequence of Hausdorff and locally compact abelian topological groups is a sequence of the same type \cite[Theorem 23.7]{str}; (b) the Pontryagin dual of a profinite (respectively, discrete and torsion) abelian topological group is a discrete and torsion (respectively, profinite) abelian topological group \cite[Theorems 19.9 and 23.30]{str}; (c) the Pontryagin dual of a (Hausdorff and locally compact) second countable abelian topological group is second countable \cite[Theorem 24.14]{hr}\e. 

\smallskip

The duality theorems to be established in this paper will only involve groups that belong to the class $\s C$ of Hausdorff and locally compact abelian topological groups which are topological extensions of abelian topological groups of one of the following types: {\rm (1)} profinite, {\rm (2)} discrete and torsion and {\rm (3)} locally profinite and of finite exponent. The class $\s C$ has the following properties:
\begin{enumerate}
\item[(i)] If $G\in\s C$, then $G^{\le*}\in \s C$.
\item[(ii)] If $G\in\s C$, then $G^{\le*}=\Hom_{\e\rm{cts}}\lbe(G,\Q/\Z\e)$, where $\Q/\Z$ is equipped with the discrete topology.
\end{enumerate}
For (i), see (a) and (b) above. Property (ii) follows from the fact that $\T_{\rm tors}\simeq\Q/\Z$ with the discrete topology \cite[Corollary 3.13]{str}.

\medskip

If $A, B\in\s C$ and $A\times B$ is equipped with the product topology, then a continuous pairing (i.e., bi-additive map) $A\times B\to \Q/\Z$ is called {\it perfect} if the induced maps $A\to B^{\le*}$ and $B\to A^{*}$ are topological isomorphisms.

\begin{lemma}\label{cpa} A pairing of abelian topological groups  $\langle-,-\rangle\colon A\times B\to \Q/\Z$ is continuous if, and only if, the following conditions hold:
\begin{enumerate}
\item[(i)] $\langle-,-\rangle$ is continuous at $(0,0)$, i.e., there exist open neighborhoods $U\!$ of $\e 0_{\lbe A}$ and $V\!$ of $\e 0_{\lbe B}$ such that $\langle\e U,V\e\rangle=\{0\}$.
\item[(ii)] For every $b\in B$, the homomorphism $\langle\e -,b\e\rangle\colon A\to \Q/\Z, a\mapsto \langle\e a,b\e\rangle,$ is continuous at $0$, i.e., there exists an open neighborhood $U\!$ of $\e 0_{\lbe A}$ such that $\langle\e U,b\e\rangle=\{0\}$. Similarly with the roles of $A$ and $B$ interchanged.
\end{enumerate}
\end{lemma}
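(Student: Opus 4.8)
The plan is to prove the two directions separately. The forward implication is immediate: if $\langle-,-\rangle$ is continuous on $A\times B$ with the product topology, then (i) is just continuity at the point $(0,0)$ (using that $\{0\}$ is open in the discrete group $\Q/\Z$), and (ii) follows because for fixed $b$ the map $a\mapsto\langle a,b\rangle$ is the composite of $a\mapsto(a,b)$ with $\langle-,-\rangle$, hence continuous, and similarly with the roles reversed. So the content is entirely in the converse.

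For the converse, assume (i) and (ii) and fix a point $(a_{0},b_{0})\in A\times B$; I want an open neighborhood of $(a_{0},b_{0})$ on which the pairing is constant, equal to $\langle a_{0},b_{0}\rangle$. The standard bilinearity trick is to expand, for $a=a_{0}+a'$ and $b=b_{0}+b'$,
\[
\langle a,b\rangle=\langle a_{0},b_{0}\rangle+\langle a',b_{0}\rangle+\langle a_{0},b'\rangle+\langle a',b'\rangle.
\]
First apply (i) to get open neighborhoods $U_{0}$ of $0_{A}$ and $V_{0}$ of $0_{B}$ with $\langle U_{0},V_{0}\rangle=\{0\}$; this kills the last term whenever $a'\in U_{0}$ and $b'\in V_{0}$. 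Next apply (ii) in the first variable with $b=b_{0}$ to obtain an open neighborhood $U_{1}$ of $0_{A}$ with $\langle U_{1},b_{0}\rangle=\{0\}$, killing the second term, and apply (ii) in the second variable with $a=a_{0}$ to obtain an open neighborhood $V_{1}$ of $0_{B}$ with $\langle a_{0},V_{1}\rangle=\{0\}$, killing the third term. Then on the open set $(a_{0}+U_{0}\cap U_{1})\times(b_{0}+V_{0}\cap V_{1})$, all three correction terms vanish and $\langle a,b\rangle=\langle a_{0},b_{0}\rangle$, proving continuity at $(a_{0},b_{0})$. Since $(a_{0},b_{0})$ was arbitrary, the pairing is continuous.

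I do not expect any real obstacle here: the argument is the usual "bi-additive + continuous at the origin in each slot $\Rightarrow$ jointly continuous into a discrete group" computation, and the only points to keep in mind are that $\Q/\Z$ carries the discrete topology (so that "value lands in $\{0\}$" is the same as "locally constant"), that finite intersections of open neighborhoods of $0$ are again open neighborhoods of $0$, and that translation by $a_{0}$ (resp.\ $b_{0}$) is a homeomorphism of $A$ (resp.\ $B$), so $a_{0}+U$ is open for $U$ open. These are all elementary facts about abelian topological groups used throughout the paper.
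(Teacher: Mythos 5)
Your proposal is correct and is essentially the same argument as the paper's: the forward direction is immediate, and for the converse the paper also expands $\langle a+U'',b+V''\rangle\subseteq\langle a,b\rangle+\langle a,V'\rangle+\langle U',b\rangle+\langle U,V\rangle$ with $U''=U\cap U'$, $V''=V\cap V'$ chosen via (i) and (ii), exactly as you do. The only cosmetic difference is that the paper phrases the target condition directly through the neighborhood characterization given in the statement rather than invoking discreteness of $\Q/\Z$ explicitly.
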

\begin{proof} If the given pairing is continuous, then it is continuous at $(a,0_{\lbe B})$ and $(0_{\lbe A},b)$ for every $a\in A,b\in B$ and (i) and (ii) are immediate. Conversely, to establish the continuity of the pairing at $(a,b\le)\in A\times B$, we need to find open neighborhoods $U^{\prime\prime}$ of $\e 0_{\lbe A}$ and $V^{\prime\prime}\!$ of $\e 0_{\lbe B}$ such that $\langle\e a + U^{\prime\prime}, b + V^{\prime\prime}\e\rangle=\{\langle\e a, b\e\rangle\}$. Choose open neighborhoods $U\!$ of $\e 0_{\lbe A}$ and $V\!$ of $\e 0_{\lbe B}$ as in (i). By (ii), there exist open neighborhoods $U^{\prime}\!$ of $\e 0_{\lbe A}$ and $V^{\prime}\!$ of $\e 0_{\lbe B}$ such that $\langle\e a,V^{\prime}\e\rangle=\langle \e U^{\prime},b\e\rangle=\{0\}$. Now set $U^{\prime\prime}=U\cap U^{\prime}$ and $V^{\prime\prime}=V\cap V^{\prime}$. Then
\[
\{\langle \e a,b\e\rangle\}\subseteq\langle \e a + U^{\prime\prime}, b + V^{\prime\prime}\e\rangle\subseteq \langle \e a,b\e\rangle+\langle \e a,V^{\prime}\e\rangle+\langle \e U^{\lle\prime},b\e\rangle+\langle \e U,V\e\rangle=\{\langle \e a,b\e\rangle\},
\]
which completes the proof.
\end{proof}

\begin{proposition}\label{pt} Let $\langle-,-\rangle\colon A\times B\to \Q/\Z$ be a pairing of locally compact abelian topological groups. Assume that
\begin{enumerate}
\item[(i)] $\langle-,-\rangle$ is continuous at $(0,0)$,
\item[(ii)] $\langle-,b\e\rangle\colon A\to \Q/\Z$ is continuous at $0$ for every $b\in B$, and
\item[(iii)] the map $B\to A^{*}=\Hom_{\e{\rm cts}}\lbe(A,\Q/\Z\e), b\mapsto \langle-,b\rangle$, is continuous at $0$.
\end{enumerate}
Then $\langle-,-\rangle$ is continuous.
\end{proposition}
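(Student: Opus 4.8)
The plan is to reduce the continuity of the pairing $\langle-,-\rangle$ to an application of Lemma \ref{cpa}, so I only need to verify conditions (i) and (ii) of that lemma. Condition (i) is hypothesis (i) here, and the first half of condition (ii) is hypothesis (ii). So the entire content of the proof is the remaining half of Lemma \ref{cpa}(ii): given $a\in A$, I must produce an open neighborhood $V$ of $0_{\lbe B}$ such that $\langle\e a, V\e\rangle=\{0\}$.

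First I would fix $a\in A$ and consider the evaluation-at-$a$ map on the dual group, $\mathrm{ev}_{a}\colon A^{*}\to \Q/\Z$, $\chi\mapsto\chi(a)$. Since $\{a\}$ is compact and $\{0\}$ is open in $\Q/\Z$ (it is discrete), the set $W=\{\chi\in A^{*}\colon \chi(a)=0\}$ is open in $A^{*}$ by the very definition of the compact-open topology on $A^{*}$; it is in fact an open neighborhood of the trivial character $0\in A^{*}$. Now invoke hypothesis (iii): the map $\phi\colon B\to A^{*}$, $b\mapsto\langle-,b\rangle$, is continuous at $0$, and $\phi(0_{\lbe B})=0\in A^{*}$. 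Hence $V\defeq\phi^{-1}(W)$ is an open neighborhood of $0_{\lbe B}$ in $B$. By construction, for every $b\in V$ we have $\phi(b)\in W$, i.e. $\langle\e a,b\e\rangle=\phi(b)(a)=0$, so $\langle\e a,V\e\rangle=\{0\}$, as required.

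With this, conditions (i) and (ii) of Lemma \ref{cpa} both hold — (i) from hypothesis (i); the ``$A$-variable'' half of (ii) from hypothesis (ii), and the ``$B$-variable'' half of (ii) from the paragraph above — so Lemma \ref{cpa} yields that $\langle-,-\rangle$ is continuous. I do not expect any genuine obstacle here: the only subtlety is recognizing that hypothesis (iii), which a priori only controls $\phi$ near $0$, combined with the discreteness of $\Q/\Z$ and the definition of the compact-open topology, is exactly enough to get the missing neighborhood $V$; the symmetry between $A$ and $B$ in Lemma \ref{cpa}(ii) is broken in the hypotheses of this proposition (we assume (ii) and (iii) only ``on the $A$ side''), which is why no extra symmetric hypothesis on $B\to A^*$ and $A\to B^*$ is needed.
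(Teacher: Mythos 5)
Your proof is correct and is essentially the paper's own argument: reduce via Lemma \ref{cpa} to the continuity of $\langle a,-\rangle$ at $0$, then use hypothesis (iii) together with a compact-open basic neighborhood of $0_{A^{*}}$ given by vanishing on a compact set — the paper takes a compact neighborhood $C$ of $a$ where you take the singleton $\{a\}$, which works just as well. One tiny wording point: since (iii) only gives continuity of $\phi$ at $0$, the set $\phi^{-1}(W)$ is a priori only a neighborhood of $0_{B}$, so one should let $V$ be an open neighborhood of $0_{B}$ contained in $\phi^{-1}(W)$ rather than $\phi^{-1}(W)$ itself.
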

\begin{proof} By Lemma \ref{cpa}\,, it suffices to check that $\langle a,-\e\rangle\colon B\to \Q/\Z$ is continuous at $0$ for every $a\in A$. Given $a\in A$, choose a compact neighborhood $C\subseteq A$ of $a$. Since $W=\{\e\chi\in A^{*}\colon \chi(C\e)=\{0\}\}$ is an open neighborhood of $0_{\lbe A^{\lbe*}}$, there exists an open neighborhood $V\!$ of $\e 0_{\lbe B}$ such that $\varphi(V)\subseteq W$. Consequently, $\langle\e a,V\e\rangle\subseteq\langle\e C,V\e\rangle=\{0\}$, i.e., $\langle\e a,V\e\rangle=\{0\}$. 
\end{proof}

\begin{remarks}\label{hcont}\indent
\begin{enumerate}
\item[(a)] A continuous pairing of abelian topological groups $\langle-,-\rangle\colon A\times B\to \Q/\Z$ induces a continuous pairing $A_{\le\rm Haus}\times B_{\le\rm Haus}\to \Q/\Z$, where $A_{\le\rm Haus}$ and $ B_{\le\rm Haus}$ are the maximal Hausdorff quotients of $A$ and $B$, respectively. See \cite[Theorem 23.8]{hr}.
\item[(b)] If the group $A$ in the proposition is {\it discrete}, only condition (iii) of the proposition has to be checked to establish the continuity of $\langle-,-\rangle$. Similarly with the roles of $A$ and $B$ interchanged.
\end{enumerate}
\end{remarks}

We now discuss the topology of cohomology groups.

\smallskip

Let $k$ be a local field. Then $k\simeq\mathbb R$ or $\mathbb C\e$ or $k$ is non-archimedean. In the latter case, for some prime number $p$, $k$ is either a $p\e$-adic field, i.e., a finite extension of $\Q_{\lle p}$, or a local function field, i.e., a finite extension of $\mathbb{F}_{\be p}((t))$. By \cite[Chapter I\e]{wei} and \cite[Corollary to Proposition 13, p.~VII.22]{bint}, a local field is a non-discrete, Hausdorff, locally compact and second countable topological field. Moreover, $k^{\times}$ is open in $k$ and the inversion map $k^{\times}\to k^{\times}, x\mapsto x^{-1}$, is continuous \cite[Theorems 16.11 and 16.12]{tf}. If $k$ is a local field and $X$ is an algebraic $k$-scheme, $X(k)$ will be equipped with the topology defined in \cite[\S3]{con}. Further, if $G\in \ck$ and $r\geq 1$, $H^{\le r}\lbe(k, G\le)$ will be endowed with the topology defined in \cite[\S3]{ces}.

\begin{proposition} \label{topg} Let $k$ be a local field and let $G\in\ck$. Then $G(k)$ is Hausdorff, locally compact and second countable. If $k$ is non-archimedean, then $G(k)$ is totally disconnected. If $G$ is proper, then $G(k)$ is compact. If $k$ is non-archimedean and $G$ is proper, then $G(k)$ is profinite.
\end{proposition}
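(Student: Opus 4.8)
The plan is to reduce each assertion about $G(k)$ to standard facts about the topology on $k$-points of algebraic varieties, using the structure theory of commutative algebraic $k$-groups recalled in \S1.2 together with the extension properties (a), (d), (e), (f), (g) listed after Lemma \ref{nos}. First I would recall that, by \cite[\S3]{con}, the assignment $X\mapsto X(k)$ (with its topology) turns a short exact sequence of algebraic $k$-groups into a short exact sequence of topological groups in which the maps are strict; more precisely, a closed immersion of $k$-groups induces a closed topological embedding on $k$-points, and a faithfully flat morphism induces an open surjection on $k$-points (over a local field this uses the implicit function theorem / Hensel's lemma). In particular the topology on $G(k)$ behaves well under the dévissages we have available.

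Next I would establish the four assertions in turn, in increasing order of difficulty. For the affine case, $G$ embeds as a closed subgroup of some $\G_{a,k}^{\lle n}\times\G_{m,k}^{\lle m}$-type group — or more simply, any affine algebraic $k$-group is a closed subgroup of some $\mathrm{GL}_{n,k}$ — so $G(k)$ is a closed subgroup of $\mathrm{GL}_n(k)$, which is Hausdorff, locally compact, second countable, and (when $k$ is non-archimedean) totally disconnected, since $k$ itself has these properties and they pass to products, closed subspaces, and subgroups by the cited extension properties. For an abelian variety $A$, $A(k)$ is compact: $A$ is proper, so by the valuative criterion $A(k)=A(\mathcal O_k)$ (when $k$ is non-archimedean with ring of integers $\mathcal O_k$), and $A(\mathcal O_k)$ is closed in the compact space $\mathbb P^N(\mathcal O_k)$ after choosing a projective embedding; in the archimedean case $A(k)$ is a compact real Lie group. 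For a general proper $G$ the same valuative-criterion argument gives compactness, and combining compactness with total disconnectedness and the fact that a compact, Hausdorff, totally disconnected group is profinite (\cite[tag 08ZW]{sp}) yields the last assertion.

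For a general $G\in\ck$, I would use the sequence $\mathcal E(L,\iota,G,\pi,A)\in\s E_{\le G}$ from the Definition after Lemma \ref{triv}: passing to $k$-points gives an exact sequence of topological groups $0\to L(k)\to G(k)\to A(k)$ in which the first map is a closed embedding and $G(k)\to A(k)$ is strict with image a closed subgroup of the compact group $A(k)$ (strictness because the map on $k$-points of a surjection of $k$-groups is open; closedness of the image uses e.g. \cite[Lemma 1.20(b)]{str} or that the image contains an open subgroup). Thus $G(k)$ is an extension of a closed subgroup of a compact Hausdorff (totally disconnected, in the non-archimedean case) group by the affine group $L(k)$, and Hausdorffness, local compactness, second countability, and total disconnectedness all follow from the corresponding extension properties applied to this sequence. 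The main obstacle is the foundational input that surjections of commutative algebraic $k$-groups induce \emph{open} maps (hence strict maps) on $k$-points over a local field, and that closed immersions induce closed topological embeddings — these are exactly the properties packaged in \cite[\S3]{con}, so once they are invoked the argument is a routine dévissage; the slight subtlety is checking that $\img(G(k)\to A(k))$ is closed in $A(k)$, which one gets because this image is an open subgroup of its closure (being the image of an open map followed by the fact that open subgroups of topological groups are closed).
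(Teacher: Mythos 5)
Your dévissage through an extension $0\to L\to G\to A\to 0$ is a genuinely different route from the paper's, but it contains a genuine gap at the step that carries all the weight: the claim that a faithfully flat surjection of algebraic $k$-groups induces an \emph{open} map on $k$-points. Openness is only available for \emph{smooth} morphisms (this is \cite[Proposition 4.3]{ces}; note that the paper's Corollary \ref{got} makes openness of the boundary and induced maps explicitly conditional on smoothness). Over a local function field --- the case this paper is principally concerned with --- $G$, and hence the quotient map $\pi\colon G\to A$, need not be smooth, and the claim fails: for example the $p$\le-power map $\G_{a,\le k}\to\G_{a,\le k}$ over $k=\mathbb{F}_{\be p}((t))$ is faithfully flat, yet on $k$-points its image $k^{\lle p}$ is a closed, non-open subgroup of $k$. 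Without strictness of $G(k)\to A(k)$ you cannot identify $G(k)/L(k)$ (quotient topology) with a subspace of $A(k)$; a continuous injection into a Hausdorff group still gives Hausdorffness, but local compactness, second countability and total disconnectedness of the quotient do not follow, so the extension properties cannot be applied. The natural repair --- deducing strictness from the open mapping theorem, i.e.\ Lemma \ref{lem:stc}(ii) --- is circular here, since it presupposes exactly the local compactness and second countability (or at least $\sigma$-compactness) of $G(k)$ that the dévissage is supposed to prove; likewise your argument that the image is closed (``image of an open map\dots'') leans on the same unavailable openness. In the paper the logical order is the reverse of yours: Proposition \ref{topg} and Proposition \ref{fc} are proved first, and only then is strict exactness of the point/cohomology sequence (Corollary \ref{got}) deduced from them via the open mapping theorem.

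The paper avoids any group-theoretic dévissage: Hausdorffness, local compactness and (in the non-archimedean case) total disconnectedness of $G(k)$ hold for the $k$-points of \emph{any} separated finite-type $k$-scheme \cite[Propositions 3.1 and 5.4]{con}; second countability follows because every algebraic $k$-group is quasi-projective \cite[Proposition A.3.5]{cgp}, so that $G(k)$ embeds topologically into some $\mathbb P^{\le n}_{\lbe k}(k)$; compactness for proper $G$ is \cite[Lemma 2.12.1(b)]{ces}; and profinite $=$ compact $+$ Hausdorff $+$ totally disconnected. Your treatment of the affine case (closed subgroup of ${\rm GL}_n(k)$) and of the proper case is essentially sound --- though ``$A(k)=A(\mathcal O_k)$ by the valuative criterion'' tacitly requires a proper model over the ring of integers, and for a general proper $G$ one should pass through quasi-projectivity to get a projective embedding --- but these reductions become unnecessary once one observes that the statement is scheme-theoretic rather than group-theoretic.
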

\begin{proof} Since $k$ is locally compact and $G$ is separated over $k$ \cite[${\rm VI}_{\rm A}$, 0.3]{sga3}, $G(k)$ is Hausdorff and locally compact. Further, if $k$ is non-archimedean, then $G(k)$ is totally disconnected. See \cite[Propositions 3.1 and 5.4]{con}. Now, since $G$ is quasi-projective over $k$ \cite[Proposition A.3.5]{cgp}, $G(k)$ is homeomorphic to a subspace of $\mathbb P_{\! k}^{\le n}\lbe(k)$ for some $n\geq 1$. Consequently, $G(k)$ is second countable by \cite[Proposition 2.A.12, (2) and (5)]{cd}. Finally, if $G$ is proper over $k$, then $G(k)$ is compact \cite[Lemma 2.12.1(b)]{ces} and therefore profinite if $k$ is non-archimedean.
\end{proof}

\begin{proposition} \label{fc} Let $k$ be a local field and let $G\in\ck$. Then $H^{\lle 1}\lbe(k, G\le)$ is torsion, Hausdorff, locally compact, totally disconnected and second countable. If $G$ is affine (respectively, smooth), then $H^{\le 1}\lbe(k, G\le)$ is of finite exponent (respectively, discrete). If $r\geq 2$, then $H^{\lle r}\lbe(k, G\le)$ is discrete and torsion.
\end{proposition}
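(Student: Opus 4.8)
The plan is to reduce both the algebraic and the topological assertions to two special cases by a d\'evissage along the structure sequence of $G$: the case of affine commutative $k$-groups, handled by Rosengarten's results (Theorem \ref{ros18}), and the case of abelian $k$-varieties, handled by Milne--Tate duality \cite{adt}. First I would fix an exact sequence $\mathcal E(L,\iota,G,\pi,A)\in\s E_{\le G}$, i.e. $0\to L\to G\to A\to 0$ with $L$ affine and $A$ an abelian variety; this exists by \cite[Theorem 2.3(i)]{bri17b} (and in characteristic $0$ one may take the Chevalley--Albanese decomposition). The general input I would invoke is that, once all the cohomology groups below are endowed with the topologies of \cite[\S3]{ces} (with the degree-$0$ terms carrying the topology of \cite[\S3]{con}, i.e. that of Proposition \ref{topg}), the associated long exact fppf-cohomology sequence is strict exact. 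In particular, setting $I=\krn\big(H^{\le 1}(k,G)\to H^{\le 1}(k,A)\big)=\img\big(H^{\le 1}(k,L)\to H^{\le 1}(k,G)\big)$ and $J=\img\big(H^{\le 1}(k,G)\to H^{\le 1}(k,A)\big)$, one obtains a topological extension $0\to I\to H^{\le 1}(k,G)\to J\to 0$.

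For the abelian-variety case I would record the following. The group $H^{\le 0}(k,A)=A(k)$ is compact, by Proposition \ref{topg}, since $A$ is proper. Next, $H^{\le 1}(k,A)=H^{\le 1}(k_{\le\et},A)$ (fppf and \'etale cohomology agree as $A$ is smooth) is discrete by \cite[\S3]{ces}, torsion (any class dies after corestriction from a finite separable splitting field), and countable: it is finite if $k$ is archimedean or $p\e$-adic by \cite{adt}, and when $k$ is a local function field it is, by \cite[Theorem III.7.8]{adt}, the Pontryagin dual of the compact, second countable (Proposition \ref{topg}) group $A^{t}(k)$. More generally $H^{\le r}(k,A)$ is discrete and torsion for every $r\geq 1$ (finite when $k$ is archimedean or $p\e$-adic, and $0$ for $r\geq 2$ when $k$ is non-archimedean). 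For the affine case, Theorem \ref{ros18} gives that $H^{\le 1}(k,L)$ is Hausdorff, locally compact, totally disconnected, second countable and of finite exponent, while $H^{\le r}(k,L)$ is discrete and torsion for $r\geq 2$.

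I would then assemble the conclusions using the extension properties recorded in Subsection \ref{tops} --- Hausdorff, locally compact, discrete, totally disconnected, second countable --- together with the elementary facts that being torsion and being of finite exponent are extension properties, and that each of these properties passes to a quotient by a closed subgroup. The subgroup $J$ of $H^{\le 1}(k,A)$ is discrete, torsion and countable. For $I$: the map $H^{\le 0}(k,A)\to H^{\le 1}(k,L)$ is continuous by strictness of the sequence and $H^{\le 0}(k,A)=A(k)$ is compact, so its image is a compact --- hence closed --- subgroup of the Hausdorff group $H^{\le 1}(k,L)$; therefore $I$, which is $H^{\le 1}(k,L)$ modulo that image, is again Hausdorff, locally compact, totally disconnected, second countable and of finite exponent. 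Being an extension of the discrete torsion countable group $J$ by $I$, the group $H^{\le 1}(k,G)$ is Hausdorff, locally compact, totally disconnected, second countable and torsion. If $G$ is affine we may take $A=0$, so $H^{\le 1}(k,G)=H^{\le 1}(k,L)$ is of finite exponent by Theorem \ref{ros18}; if $G$ is smooth, then $H^{\le 1}(k,G)=H^{\le 1}(k_{\le\et},G)$ is discrete by \cite[\S3]{ces}. Finally, for $r\geq 2$ the strict exact sequence gives a topological extension $0\to I_r\to H^{\le r}(k,G)\to J_r\to 0$ with $J_r$ a subgroup of the discrete torsion group $H^{\le r}(k,A)$ and $I_r$ a quotient of the discrete torsion group $H^{\le r}(k,L)$; as discrete is an extension property, $H^{\le r}(k,G)$ is discrete and torsion.

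The step I expect to be the main obstacle is the strict exactness of the long exact cohomology sequence in the topologies of \cite{ces} and \cite{con} --- in particular, checking that the framework of those references applies to the sequence $0\to L\to G\to A\to 0$, whose terms are neither all affine nor all smooth --- and the closedness of the various images, where the essential point is the properness of $A$, which makes $A(k)$ compact. The genuine depth of the statement is of course located in the affine case, i.e. in Rosengarten's theorem quoted here as Theorem \ref{ros18}.
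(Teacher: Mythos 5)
Your core argument for the delicate part of the statement is the same as the paper's: the paper also fixes $0\to L\to G\to A\to 0$, uses the strictness of the segment $A(k)\to H^1(k,L)\to H^1(k,G)\to H^1(k,A)$, observes that $\partial(A(k))$ is compact hence closed in $H^1(k,L)$, gets second countability of $H^1(k,A)$ from Milne--Tate duality with the second countable group $A^t(k)$ (Proposition \ref{topg} and Remark \ref{ltd}), and concludes by extension properties. The difference is that the paper uses this d\'evissage \emph{only} for total disconnectedness and second countability of $H^1(k,G)$; everything else (torsion, Hausdorff, locally compact, finite exponent in the affine case, discreteness in the smooth case, and discreteness for $r\geq 2$) is quoted directly from \cite[Propositions 3.5(a),(c), 3.7(c), 3.9]{ces} and \cite[proof of Lemma 3.2.1, Proposition 3.3.4, Lemma 3.4.1]{ros18}.

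Two of your steps do not go through as written. First, the ``general input'' that the whole long exact sequence is strict exact is not available at this stage: the paper's strict exactness statement (Corollary \ref{got}) is proved \emph{after} Proposition \ref{fc} and uses it (via Lemma \ref{lem:stc}(ii), which needs second countability and closedness of images), so invoking it here would be circular. What is actually available, and what the paper uses, is continuity of all maps \cite[Proposition 4.2]{ces}, strictness of $\partial$ from compactness of $A(k)$ (Lemma \ref{lem:stc}(i)), and openness of $H^1(k,L)\to H^1(k,G)$ from \cite[Proposition 4.3(c)]{ces}, which applies because the quotient $A$ is \emph{smooth}; your argument should cite these specific facts rather than a blanket strictness claim. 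Second, the $r\geq 2$ case cannot be recovered from the d\'evissage: continuity of $H^r(k,G)\to H^r(k,A)$ into a discrete group does make its kernel $I_r$ open, but discreteness of $H^r(k,G)$ would require $I_r$ itself to be discrete, i.e.\ the map $H^r(k,L)\to H^r(k,G)$ to be open, and a continuous surjection from a discrete group need not be open. Discreteness in degrees $\geq 2$ is a fact about how the topology of \cite{ces} is defined and must be taken from \cite[Proposition 3.5(c)]{ces} directly, as the paper does (your attribution of it, for affine $L$, to Theorem \ref{ros18} is also off, since that theorem only records the duality pairings). The purely algebraic torsion statements via d\'evissage are fine.
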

\begin{proof} The abelian group $H^{\lle r}\lbe(k, G\le)$ is discrete if $r\geq 2$ (respectively, torsion if $r\geq 1$) by \cite[Proposition 3.5(c)]{ces} (respectively, the argument in \cite[proof of Lemma 3.2.1]{ros18}). All remaining assertions, except for the fact that $H^{\lle 1}\lbe(k, G\le)$ is totally disconnected and second countable when  $G$ is not affine, are established in \cite[Propositions 3.5(a), 3.7(c) and 3.9]{ces} and \cite[Proposition 3.3.4 and Lemma 3.4.1]{ros18}. We will now show that $H^{\le 1}\lbe(k, G\le)$ is totally disconnected and second countable. Write $G$ as an extension
	$0\to L\to G\to A\to 0$, where $L$ is affine and $A$ is an abelian variety. By Proposition \ref{topg}\,, Lemma \ref{lem:stc}(i) and \cite[Propositions 4.2 and 4.3(c)]{ces}, the given extension induces a strict exact sequence of Hausdorff and locally compact abelian topological groups $A(k)\overset{\!\partial}{\to} H^{\le 1}\lbe(k, L\le)\overset{\! f}{\to} H^{\le 1}\lbe(k, G\le)\to H^{\le 1}\lbe(k, A\le)$. By \cite[Proposition 3.3.4]{ros18}, $H^{\le 1}\lbe(k, L\le)$ is totally disconnected and second countable, whence the Hausdorff group $H^{\le 1}\lbe(k, L\le)/\partial\le(\be A(k))=H^{\le 1}\lbe(k, L\le)/f^{-1}(0)$ has the same properties by \cite[Lemma 6.2(a) and Proposition 6.9(b)]{str}. Now, since $A$ is smooth, $H^{\le 1}\lbe(k, A\le)$ is discrete and therefore totally disconnected. We claim that $H^{\le 1}\lbe(k, A\le)$ is second countable. This is clear if $k$ is archimedean since $H^{\le 1}\lbe(k, A\le)$ is finite in this case \cite[Remark I.3.7]{adt}. If $k$ is non-archimedean then, by Milne\e-\le Tate duality, $H^{\le 1}\lbe(k, A\le)$ is topologically isomorphic to the Pontryagin dual of $A^{t}\lbe(k)$ (see Remark \ref{ltd} below). Thus, since $A^{t}\lbe(k)$ is second countable by Proposition \ref{topg}\,, $H^{\le 1}\lbe(k, A\le)$ is second countable as well. We conclude that $H^{\le 1}\lbe(k, G\le)$ is a topological extension of totally disconnected and second countable abelian topological groups, so it has the same properties.
\end{proof}

\begin{remark}\label{ltd} If $A$ is an abelian variety over a $p\e$-adic field $k$, then the continuity of the Tate pairing $\{-,-\}\colon A(k)\times H^{1}(k,A^{t})\to\Q/\Z$ was established in \cite[pp.~268-269]{ta} using the fact that, for every $m\in \N$, $mA(k)$ is an open subgroup of $A(k)$ (the latter holds since $A(k)/mA(k)$ injects into the finite discrete group $H^{1}(k,A_{\le m})$). In the local function field case, we have been unable to find in the literature a proof of the continuity of $\{-,-\}$ (this continuity is claimed in \cite[lines 15\e-16, p.~274]{mil} but no proof is given there). Note that, since $A(k)/p\le A(k)$ is often infinite if $k$ is a local function field of characteristic $p$ \cite[Remark A.2]{ces15}, Tate's argument breaks down in this case. I thank Ki-Seng Tan for sending me the following proof of the continuity of $\{-,-\}$ which is valid for any local field $k$. Since $H^{1}(k,A^{t})$ is discrete by Proposition \ref{fc}\,, Proposition \ref{cpa} shows that we need only check the following statement: given $\xi\in H^{1}(k,A^{t})$, there exists an open neighborhood $U$ of $0_{A(k)}$ such that $\{\e U,\xi\e\}=\{0\}$. There exists a finite Galois extension $k^{\le\prime}\be/k$ such that $\res^{(1)}_{A^{t},\, k^{\le\prime}\be/k}(\xi)=0$, where $\res^{(1)}_{A^{t},\, k^{\le\prime}\be/k}\colon H^{1}(k,A^{t})\to H^{1}(k^{\le\prime},A^{t})$ is the restriction map in Galois cohomology. Since $\res^{(1)}_{A^{t},\, k^{\le\prime}\be/k}$ is adjoint (relative to $\{-,-\}$) to the norm map $N_{A,\,k^{\le\prime}\be/k}\colon A(k^{\le\prime})\to A(k)$ (cf. \cite[(8), p.~268]{ta}), we have $\{ N_{A^{t},\,k^{\le\prime}\be/k}(A(k^{\le\prime})),\xi\e\}=0$. Now, by the smoothness of $A$, the norm $k$-morphism $N_{A,\,k^{\le\prime}\be/k}$ is smooth and surjective \cite[Proposition 3.2]{ga19}, whence the norm map $N_{A,\,k^{\le\prime}\be/k}\colon A(k^{\le\prime})\to A(k)$ is open \cite[Proposition 4.3(a)]{ces}. Thus $U=N_{A,\,k^{\le\prime}\be/k}(A(k^{\le\prime}))$ is the required open neighborhood of $0_{\lle A(k)}$.	 
\end{remark}

\begin{corollary}\label{got} Let $k$ be a local field and let $0\to G_{\lbe 1}\overset{\!i}{\to} G_{2}\to G_{3}\to 0$ be an exact sequence in $\ck$. Then the induced sequence of Hausdorff and locally compact abelian topological groups
	\[
	0\to G_{\lbe 1}\lbe(k)\to G_{2}\lbe (k)\to G_{3}\lbe (k)\overset{\partial}{\to} H^{\le 1}\lbe(k,G_{\lbe 1}\le)\overset{i^{\lle (\lbe 1\lbe)}}{\to} H^{\le 1}\lbe(k,G_{2}\le)\to H^{\le 1}\lbe(k,G_{3}\le)\to H^{\le 2}\lbe(k,G_{\lbe 1}\le)
	\]
	is strict exact. Further, if $G_{2}$ (respectively, $G_{3}$) is smooth, then $\partial$ (respectively, $i^{\lle (1)}$) is open.
\end{corollary}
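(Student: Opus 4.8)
The plan is to reduce everything to the formal long exact sequence in fppf cohomology, the topological input of Propositions~\ref{topg} and \ref{fc}, and the strictness criterion Lemma~\ref{lem:stc}(ii). First I would record that every term has the asserted topology: by Proposition~\ref{topg} each $G_{i}(k)$ is Hausdorff, locally compact and second countable; by Proposition~\ref{fc} each $H^{1}(k,G_{i})$ is Hausdorff, locally compact and second countable, while $H^{2}(k,G_{1})$ is discrete and torsion, hence in particular locally compact and Hausdorff. The underlying sequence of abstract abelian groups is exact, being a truncation of the long exact fppf cohomology sequence attached to $0\to G_{1}\to G_{2}\to G_{3}\to 0$.

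Next I would check that every map in the sequence is continuous. The maps $G_{i}(k)\to G_{j}(k)$ and $H^{r}(k,G_{i})\to H^{r}(k,G_{j})$ induced by $i$ and by $G_{2}\to G_{3}$ are continuous by the functoriality of the topologies of \cite[\S3]{con} and \cite[\S3]{ces}, and the two connecting homomorphisms $\partial\colon G_{3}(k)\to H^{1}(k,G_{1})$ and $H^{1}(k,G_{3})\to H^{2}(k,G_{1})$ are continuous by \cite[Proposition 4.2]{ces}. Granting this, strictness follows uniformly: if $u$ is any map of the sequence and $v$ the following one, then $\img u=\krn v$ is closed in the target of $u$, because $v$ is continuous with Hausdorff target (for the last map this holds trivially, the target being discrete). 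Since the source of $u$ is locally compact and second countable and its target is locally compact and Hausdorff, Lemma~\ref{lem:stc}(ii) applies and shows that $u$ is strict. Thus the sequence is strict exact; in particular $G_{1}(k)\into G_{2}(k)$ is a closed topological embedding.

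For the two smoothness addenda I would argue by the same mechanism. If $G_{2}$ is smooth, then $H^{1}(k,G_{2})$ is discrete by Proposition~\ref{fc}, so the continuous homomorphism $i^{(1)}\colon H^{1}(k,G_{1})\to H^{1}(k,G_{2})$ has open kernel; by exactness this kernel equals $\img\partial$, so $\img\partial$ is open in $H^{1}(k,G_{1})$, and since $\partial$ is strict, i.e.\ open onto its image, it follows that $\partial$ is open. Symmetrically, if $G_{3}$ is smooth then $H^{1}(k,G_{3})$ is discrete, so $\img i^{(1)}=\krn\bigl[H^{1}(k,G_{2})\to H^{1}(k,G_{3})\bigr]$ is open in $H^{1}(k,G_{2})$, and the strictness of $i^{(1)}$ then gives that $i^{(1)}$ is open.

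The only point that is not purely formal is the continuity of the connecting homomorphisms $\partial$ and $H^{1}(k,G_{3})\to H^{2}(k,G_{1})$; this is the main obstacle, and it is precisely where the explicit construction of the topology on fppf cohomology in \cite{ces} enters (via \cite[Proposition 4.2]{ces}). Given that input, all remaining verifications are routine applications of Propositions~\ref{topg}, \ref{fc} and Lemma~\ref{lem:stc}(ii), and I do not expect any further difficulty.
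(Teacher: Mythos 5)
Your proposal is correct, and for the main assertion it follows essentially the same route as the paper: topological properties of the terms from Propositions \ref{topg} and \ref{fc}, continuity of all maps via the topologies of \cite{con} and \cite{ces} (in particular \cite[Proposition 4.2]{ces} for the connecting maps), and strictness from Lemma \ref{lem:stc}(ii). The only substantive difference is in how closedness of images and the openness addenda are obtained. The paper extracts ``continuous with closed image'' for every map directly from \cite[Proposition 4.2]{ces}, and for the second assertion simply cites \cite[Proposition 4.3, (b) and (c)]{ces}; you instead deduce closedness of each image from exactness together with continuity of the following map into a Hausdorff target (with the last map handled by discreteness of $H^{2}(k,G_{1})$), and you derive the openness of $\partial$ (respectively $i^{(1)}$) from the already-established strictness plus the observation that its image, being the kernel of a continuous map into the discrete group $H^{1}(k,G_{2})$ (respectively $H^{1}(k,G_{3})$), is open. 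Both deductions are valid (an open subset of an open subgroup is open in the ambient group, so strict with open image does imply open), and there is no circularity since Proposition \ref{fc} is proved independently of this corollary; your treatment of the addenda is slightly more self-contained, while the paper's is shorter by outsourcing it to \v{C}esnavi\v{c}ius.
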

\begin{proof} By Propositions \ref{topg} and \ref{fc} and \cite[Proposition 4.2]{ces}, in the above sequence every group is Hausdorff, locally compact and second countable and every map is continuous with closed image. Now Lemma \ref{lem:stc}(ii) yields the first assertion of the corollary. The second assertion is \cite[Proposition 4.3, (b) and (c)]{ces}\e.
\end{proof}

\section{The Nagao topologies}\label{ntop}

In this section we review (the commutative aspects of\e) the work of Nagao \cite{na}, which is central to this paper.

\smallskip

Let $G$ be an abelian group given as an extension
\begin{equation}\label{seq:agb}
\mathcal S\colon 0\to A\overset{\!f}{\to} G\overset{\!g}{\to} B\to 0
\end{equation}
in the category of abelian groups, where $A$ and $B$ are first countable topological groups. By the proof of
\cite[Theorem 3]{na}, there exists a (first countable) topology on $G$ such that \eqref{seq:agb} is a topological extension. Such a topology, which is not uniquely determined in general, may be constructed as follows.

Choose a set-theoretic section (i.e., right-inverse) $\sigma\colon B\to G$ of $g$ such that $\sigma(0)=0$ and $\sigma(-b)=-\sigma(b)$ for every $b\in B$ and let
\begin{equation}\label{hsig}
h_{\le\sigma}\colon B\times B\to A\e,\, (b_{\lle 1},b_{\lle 2})\mapsto f^{-1}(\sigma(b_{\le 1})+\sigma(b_{\le 2})-\sigma(b_{\le 1}+b_{\le 2})),
\end{equation}
be the corresponding factor set \cite[Definition 6.6.4]{we}. Now equip the set $A\times B$ with the abelian group structure defined by
\begin{equation}\label{gstr}
(\e a_{\lle 1},b_{\lle 1})+(\e a_{\lle 2},b_{\lle  2})=(\e a_{\lle 1}+a_{\lle 2}+h_{\le\sigma}\lbe(b_{1},b_{\lle  2}),b_{\lle 1}+b_{\lle 2}\le),
\end{equation}
where $a_{\lle 1}, a_{\lle 2}\in A$ and $b_{\lle 1},b_{\lle 2}\in B$. Then the maps $i\colon A\to A\times B, \e a\mapsto (\e a,0),$ and $q\colon A\times B\to B, (\e a,b)\mapsto b$ are homomorphisms of abelian groups. Now consider the map
\begin{equation}\label{tet}
\theta_{\be \sigma}\colon A\times B\to G, (\e a,b)\mapsto f(\lle a\lle)+\sigma(b).
\end{equation}
Then $\theta_{\be \sigma}$ is an isomorphism of abelian groups and the following diagram of abelian groups is exact and commutative:
\begin{equation}\label{diag:top}
\xymatrix{0\ar[r]&A\ar[r]^(.43){i}\ar@{=}[d]& A\times B\ar[r]^(.55){q}\ar[d]_(.48){\theta_{\lbe \sigma}}^(.47){\simeq}& \ar@/^1pc/[l]_(.47){s}B\ar[r]\ar@{=}[d]&0\\
0\ar[r]&A\ar[r]^{f}&G\ar[r]^(.53){g}&\ar@/^1pc/[l]_(.47){\sigma}B\ar[r]&0,
}
\end{equation}
where $s\colon B\to A\times B, b\mapsto (0,b)$, is the canonical set-theoretic section of $q$. Now equip $A\times B$ with the (group) topology whose neighbourhood basis at $(0,0)$ is the set of products $U\times V$, where $U$ and $V$ are open neighbourhoods of $0_{\lbe A}$ and $0_{\lbe B}$, respectively \cite[Theorem 3.22]{str}\,. Then the top row of \eqref{diag:top} is a topological extension of first countable abelian topological groups. Now equip $G$ with the topology coinduced by $\theta_{\lbe \sigma}$ from the topology of $A\times B$, i.e., a subset $W\subseteq G$ is open if, and only if, $\theta_{\be \sigma}^{\le -1}\lbe(W)\subseteq A\times B$ is open. Then the bottom row of diagram \eqref{diag:top} is a topological extension of first countable abelian topological groups. Further, the following conditions hold:

\begin{enumerate}
\item[(N1)] $\sigma(0)=0$ and $\sigma(-b)=-\sigma(b)$ for every $b\in B$, 
\item[(N2)] $\sigma\colon B\to G$ is continuous at $0$ and
\item[(N3)] a sequence $\{f(a_{i})+\sigma(b_{i})\}\subset G$ converges to $0_{\le G}$ if, and only if, $\{a_{i}\}$ and $\{b_{i}\}$ converge to $0_{\lbe A}$ and $0_{B}$, respectively.
\end{enumerate}

We call the topology of $G$ defined above the {\it Nagao topology of $G$ determined by $(\mathcal S,\sigma)$}.

\smallskip

The following ``functoriality lemma" is a key technical result, for the reasons explained in \cite[Remark 2.4]{ga24}.

\begin{lemma}\label{nag} Let
\[
\xymatrix{	\mathcal S_{\le 1}\colon 0\ar[r]&A_{1}\ar[d]^{\alpha}\ar[r]^{f_{1}}&G_{1}\ar[d]^{\gamma}\ar[r]^(.53){g_{1}}&\ar@/^1pc/[l]_(.47){\sigma_{1}}B_{1}\ar[d]^{\beta}\ar[r]&0\\
\mathcal S_{\le 2}\colon 0\ar[r]&A_{2}\ar[r]^{f_{2}}&G_{2}\ar[r]^(.53){g_{2}}&\ar@/^1pc/[l]_(.47){\sigma_{2}}B_{2}\ar[r]&0
}
\]
be an exact and commutative diagram of abelian groups, where $A_{\le i}$ and $B_{\le i}$ are first countable abelian topological groups and $\sigma_{\lle i}$ is a set-theoretic section of $g_{\le i}$ satisfying condition {\rm (N1)} above, where $i=1$ and $2$. If $G_{\le i}$ is equipped with the Nagao topology determined by $(\mathcal S_{\le i},\sigma_{\lle i})$, where $i=1$ and $2$, and $\gamma\circ\sigma_{\lle 1}=\sigma_{\lle 2}\circ\beta$, then $\gamma$ is continuous (respectively, continuous and strict) if, and only if, $\alpha$ and $\beta$ are continuous (respectively, continuous and strict).
\end{lemma}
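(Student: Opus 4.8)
The plan is to move both Nagao topologies onto products via the canonical isomorphisms $\theta_{\sigma_i}$ and thereby reduce the lemma to an elementary statement about product topologies. First I would recall that, by the construction of the Nagao topology, the map $\theta_{\sigma_i}\colon A_i\times B_i\to G_i,\ (a,b)\mapsto f_i(a)+\sigma_i(b),$ is an isomorphism of abelian groups which becomes a homeomorphism once $A_i\times B_i$ carries the product topology and $G_i$ carries the Nagao topology determined by $(\mathcal S_{\le i},\sigma_{\lle i})$, for $i=1,2$. Next I would check, using the relation $\gamma\circ f_1=f_2\circ\alpha$ furnished by the given commutative diagram together with the hypothesis $\gamma\circ\sigma_1=\sigma_2\circ\beta$, that $\gamma(\theta_{\sigma_1}(a,b))=\gamma(f_1(a))+\gamma(\sigma_1(b))=f_2(\alpha(a))+\sigma_2(\beta(b))=\theta_{\sigma_2}((\alpha\times\beta)(a,b))$ for every $(a,b)\in A_1\times B_1$, i.e. $\gamma\circ\theta_{\sigma_1}=\theta_{\sigma_2}\circ(\alpha\times\beta)$. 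Since continuity and strictness of a homomorphism of abelian topological groups are each preserved under pre- and post-composition with topological isomorphisms, this reduces the lemma to the assertion that $\alpha\times\beta\colon A_1\times B_1\to A_2\times B_2$ is continuous (respectively, continuous and strict) if, and only if, both $\alpha$ and $\beta$ are.

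I would then prove this last assertion directly. For continuity: if $\alpha$ and $\beta$ are continuous then $\alpha\times\beta$ is continuous, its two composites with the projections of $A_2\times B_2$ being continuous; conversely, restricting $\alpha\times\beta$ to the slices $A_1\times\{0\}\cong A_1$ and $\{0\}\times B_1\cong B_1$ recovers $\alpha$ and $\beta$, so both are continuous. For strictness, I would note that $\krn(\alpha\times\beta)=\krn\alpha\times\krn\beta$ and $\img(\alpha\times\beta)=\img\alpha\times\img\beta$, so that the map $(A_1\times B_1)/\krn(\alpha\times\beta)\to\img(\alpha\times\beta)$ induced by $\alpha\times\beta$ is, algebraically, the product $\overline{\alpha}\times\overline{\beta}$ of the maps $\overline{\alpha}\colon A_1/\krn\alpha\to\img\alpha$ and $\overline{\beta}\colon B_1/\krn\beta\to\img\beta$ induced by $\alpha$ and $\beta$. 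Two topological identifications then finish the argument: the subspace topology on $\img\alpha\times\img\beta\subseteq A_2\times B_2$ is the product of the subspace topologies (immediate from the definitions), and the quotient topology on $(A_1\times B_1)/(\krn\alpha\times\krn\beta)$ is the product of the quotient topologies on $A_1/\krn\alpha$ and $B_1/\krn\beta$ (the projection $A_1\times B_1\to(A_1/\krn\alpha)\times(B_1/\krn\beta)$ is a product of open surjective homomorphisms by \cite[Lemma 6.2(a)]{str}, hence itself open, continuous and surjective, hence a quotient map, and its kernel is $\krn\alpha\times\krn\beta$). With these in hand, $\overline{\alpha}\times\overline{\beta}$ is a topological isomorphism if, and only if, both $\overline{\alpha}$ and $\overline{\beta}$ are --- the forward direction again by restriction to slices --- i.e. if, and only if, $\alpha$ and $\beta$ are strict, which is exactly the claim.

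I expect the proof to be essentially formal; the only slightly delicate point is the identification of the quotient of a finite product of topological groups by a product of subgroups with the product of the quotients, which rests on the openness of the quotient projections. Should one prefer to bypass it, a sequential argument is available since $A_i$ and $B_i$ --- hence $G_i$ --- are first countable: continuity of $\gamma$ amounts to sequential continuity, and by condition {\rm (N3)} a sequence $\{f_1(a_i)+\sigma_1(b_i)\}$ converges to $0$ in $G_1$ precisely when $a_i\to 0$ and $b_i\to 0$, whereas $\gamma$ sends it to $\{f_2(\alpha(a_i))+\sigma_2(\beta(b_i))\}$, which converges to $0$ precisely when $\alpha(a_i)\to 0$ and $\beta(b_i)\to 0$; specializing to $b_i\equiv 0$ or $a_i\equiv 0$ isolates the continuity of $\alpha$ and of $\beta$, and strictness follows by applying the same criterion to $\overline{\alpha}\times\overline{\beta}$ together with its inverse. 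I would present the homeomorphism argument as the main proof, as it is the more transparent of the two.
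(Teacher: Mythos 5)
Your reduction is the same one the paper uses: transport everything through the isomorphisms $\theta_{\sigma_i}$ of \eqref{tet} via the intertwining identity $\gamma\circ\theta_{\sigma_1}=\theta_{\sigma_2}\circ(\alpha\times\beta)$, which you verify correctly from $\gamma\circ f_1=f_2\circ\alpha$ and $\gamma\circ\sigma_1=\sigma_2\circ\beta$. The gap lies in what you then prove. In the Nagao construction the set $A_i\times B_i$ carries the group law \eqref{gstr} twisted by the factor set $h_{\sigma_i}$ of \eqref{hsig}, and its topology is the \emph{group topology for that twisted law} whose neighbourhood filter at the origin is generated by the products $U\times V$; this is \emph{not} the product topology of the spaces $A_i$ and $B_i$ in general. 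The two topologies agree in their neighbourhood filters at $(0,0)$, but a twisted translate of $U\times V$ by $(a_0,b_0)$ is $\{(a_0+a+h_{\sigma_i}(b_0,b),\,b_0+b): a\in U,\, b\in V\}$, which is comparable to a product neighbourhood only when $h_{\sigma_i}(b_0,\cdot)$ is continuous at $0$, i.e.\ essentially when $\sigma_i$ is continuous at $b_0$; condition (N2) gives continuity of $\sigma_i$ at $0$ only. (If the Nagao topology were always the product topology, it could not behave as described in Remark \ref{ndisc}.) Consequently the global product-topology arguments in your second paragraph are not available as stated: the ``compose with the projections'' criterion for continuity of $\alpha\times\beta$, the restriction to the slice $\{0\}\times B_1$ (whose subspace topology away from the origin can be strictly finer than that of $B_1$), and, most seriously, the strictness argument identifying $(A_1\times B_1)/(\krn\alpha\times\krn\beta)$ and $\img\alpha\times\img\beta$ with products of quotients and subspaces, all presuppose the direct-product group and topology. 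Note also that $\theta_{\sigma_i}$ is a group isomorphism only when its source carries the twisted law; on the direct product it is not a homomorphism unless $\sigma_i$ is one.

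The repair is to argue only at the identity, which is exactly what makes the paper's terse proof work: the paper first records that $\alpha\times\beta$ is a homomorphism for the twisted laws (equivalently, $\alpha(h_{\sigma_1}(b,b'))=h_{\sigma_2}(\beta(b),\beta(b'))$, seen by applying the injective map $f_2$ and using the two compatibilities above), so that every map in sight is a homomorphism of topological groups; continuity and openness onto the image of such a map may then be tested on the neighbourhood filter at the identity, where both Nagao topologies have the basis $\{U\times V\}$. There your computations do the job: $\gamma(\theta_{\sigma_1}(U_1\times V_1))=\theta_{\sigma_2}(\alpha(U_1)\times\beta(V_1))$, the basic identity neighbourhoods of $\img\gamma=\theta_{\sigma_2}(\img\alpha\times\img\beta)$ are $\theta_{\sigma_2}((U_2\cap\img\alpha)\times(V_2\cap\img\beta))$, and comparing these gives both directions of the continuity and strictness equivalences without ever invoking the product topology globally. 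Your sequential variant is closer to being correct, since (N3) is a statement at the origin, but it too needs the homomorphism property (of $\gamma$, $\alpha$, $\beta$) to pass from continuity at $0$ to continuity, and (N3) says nothing about convergence at other points, so the phrase ``continuity of $\gamma$ amounts to sequential continuity'' must likewise be routed through the identity.
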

\begin{proof} If $A_{i}\times B_{i}$ is equipped with the group structure \eqref{gstr} determined by $h_{\sigma_{i}}$ \eqref{hsig}, then the map
$\alpha\times\beta\colon A_{1}\times B_{1}\to A_{2}\times B_{2}$ is a homomorphism of abelian groups. Further, $\gamma\circ \theta_{\lle \sigma_{\lbe 1}}=\theta_{\lle \sigma_{\lbe 2}}\circ(\le\alpha\times \beta\e)$, where $\theta_{\lle \sigma_{\lbe i}}$ is the map \eqref{tet} associated to $\sigma_{\lle i}$. The lemma is now clear.
\end{proof}

\begin{definition} \label{nak} If $\mathcal S^{\le\prime}\colon 0\to A\to G\overset{\!g}{\to} B\to 0$ is a topological extension of first countable abelian topological groups, we will write
\[
|\mathcal S^{\le\prime}|\colon 0\to A\to G\overset{\!g}{\to} B\to 0
\]
for the extension of abelian groups canonically obtained from $\mathcal S^{\le\prime}$ by ignoring (only) the topology of $G$. 
\end{definition}

\begin{theorem}\label{nag2} {\rm (\cite[Theorem 4]{na})} If $\mathcal S^{\le\prime}\colon 0\to A\to G\overset{\!g}{\to} B\to 0$ is a topological extension of first countable abelian topological groups, then there exists a set-theoretic section $\sigma$ of $g$ satisfying  condition {\rm (N1)} above such that the topology of $G$ is the Nagao topology determined by $(|\mathcal S^{\le\prime}|,\sigma)$.
\end{theorem}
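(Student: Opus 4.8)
The plan is to reduce the statement to the construction of one well-chosen set-theoretic section and then to match topologies on the nose using convergent sequences. After replacing $\mathcal{S}'$ by a homeomorphic extension (as in the discussion of topological extensions in \S\ref{tops}), I may assume that $A$ is a subgroup of $G$ with the subspace topology, that $B=G/A$ with the quotient topology, and that $f$ and $g$ are the inclusion and the projection; thus $f$ is a topological embedding and $g$ is a continuous open surjection. Write $\tau$ for the given topology of $G$; it is first countable because being first countable is an extension property (\S\ref{tops}). For any set-theoretic section $\sigma$ of $g$ satisfying (N1), write $\tau_{\sigma}$ for the Nagao topology determined by $(\lvert\mathcal{S}'\rvert,\sigma)$; by construction $(G,\tau_{\sigma})$ is again first countable. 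Since $\tau$ and $\tau_{\sigma}$ are both group topologies on the abelian group $G$, and since in a first countable space a set is closed exactly when it is sequentially closed, it suffices to exhibit a $\sigma$ for which $\tau$ and $\tau_{\sigma}$ have the same sequences converging to $0$.

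Next I would isolate the property of $\sigma$ that makes this work: besides (N1), the section $\sigma\colon B\to(G,\tau)$ should be continuous at $0$. Granting this, let $(x_{i})$ be any sequence in $G$ and write $x_{i}=f(a_{i})+\sigma(b_{i})$ with $b_{i}=g(x_{i})$ and $a_{i}=f^{-1}(x_{i}-\sigma(b_{i}))$ the uniquely determined elements (note $x_{i}-\sigma(b_{i})\in\ker g=f(A)$). By (N3), $x_{i}\to 0$ in $\tau_{\sigma}$ if and only if $a_{i}\to 0$ in $A$ and $b_{i}\to 0$ in $B$. On the other hand, if $x_{i}\to 0$ in $\tau$ then $b_{i}=g(x_{i})\to 0$ by continuity of $g$, hence $\sigma(b_{i})\to 0$ by continuity of $\sigma$ at $0$, hence $f(a_{i})=x_{i}-\sigma(b_{i})\to 0$ in $(G,\tau)$, and therefore $a_{i}\to 0$ in $A$ because $f$ is an embedding; conversely, if $a_{i}\to 0$ and $b_{i}\to 0$ then $f(a_{i})\to 0$ (continuity of $f$) and $\sigma(b_{i})\to 0$ (continuity of $\sigma$ at $0$), so $x_{i}=f(a_{i})+\sigma(b_{i})\to 0$ in $\tau$. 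Thus $\tau$ and $\tau_{\sigma}$ have the same $0$-convergent sequences, whence $\tau=\tau_{\sigma}$, and the theorem follows once such a $\sigma$ is produced.

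To build $\sigma$ I would use first countability together with the openness of $g$. Fix a decreasing basis $(O_{n})_{n\ge 1}$ of symmetric open neighbourhoods of $0_{G}$ and a decreasing basis $(V_{n})_{n\ge 1}$ of symmetric open neighbourhoods of $0_{B}$; since $g$ is open, $g(O_{n})$ is a symmetric open neighbourhood of $0_{B}$, so the sets $W_{n}:=g(O_{n})\cap V_{n}$ form a decreasing basis of symmetric open neighbourhoods of $0_{B}$ with $W_{n}\subseteq g(O_{n})$. For $b\in B$ lying in at least one but not all of the $W_{n}$, let $n(b)\ge 1$ be the largest index with $b\in W_{n(b)}$; then $g^{-1}(b)\cap O_{n(b)}\ne\emptyset$ since $W_{n(b)}\subseteq g(O_{n(b)})$, and $n(-b)=n(b)$ because the $W_{n}$ are symmetric. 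Choosing one representative from each pair $\{b,-b\}$, set $\sigma(0)=0$, pick $\sigma(b)\in g^{-1}(b)\cap O_{n(b)}$ for such $b$ and $\sigma(-b)=-\sigma(b)$, and pick $\sigma(b)$ arbitrarily in $g^{-1}(b)$ for the remaining $b$ (again antisymmetrically), so that (N1) holds. Then $\sigma(W_{m})\subseteq O_{m}$ for every $m$: indeed $\sigma(0)=0\in O_{m}$, and for $0\ne b\in W_{m}$ one has $n(b)\ge m$, so $\sigma(b)\in O_{n(b)}\subseteq O_{m}$. Hence $\sigma$ is continuous at $0$, as required.

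The step I expect to be the main obstacle is precisely this last construction: arranging continuity of $\sigma$ at $0$ while keeping (N1). The argument above is clean when $B$ is Hausdorff, since then every $b\ne 0$ fails to lie in some $W_{n}$ and $n(b)$ is genuinely defined; in general one must in addition assign $\sigma(b)$ for $b\in\overline{\{0_{B}\}}$ in such a way that $\sigma(b)\in\overline{\{0_{G}\}}$, so that $\sigma(W_{m})\subseteq O_{m}$ persists — this forces one to extract lifts of such $b$ inside every $O_{n}$, which is where the fine structure of the topological extension enters. Everything else is the routine bookkeeping already encapsulated in conditions (N1)–(N3) and in the fact that the map $\theta_{\sigma}$ in diagram \eqref{diag:top} is a homeomorphism onto $(G,\tau_{\sigma})$.
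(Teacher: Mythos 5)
Your overall route is the one the paper (following Nagao) takes: produce a set-theoretic section satisfying (N1) that is continuous at $0$, and deduce that the Nagao topology it determines is the given one. The paper's requirement $\sigma(g(W_n))\subseteq W_n$, for $(W_n)$ a symmetric neighbourhood basis of $0_G$, is exactly your continuity at $0$, and your choice of lifts via the maximal index $n(b)$ is a repackaging of the paper's prescription that $\sigma(b)\in W_n\setminus W_{n+1}$ whenever $b\in g(W_n\setminus W_{n+1})$. What you add, usefully, is an explicit proof that such a section suffices: the argument via $0$-convergent sequences, sequentiality of first countable topologies and translation invariance is correct, whereas the paper simply refers to the proof of Theorem 2 in \cite{na} for this point.

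The step you flag as the main obstacle is, however, a genuine gap, and the repair you sketch cannot work in general. For $b\in\overline{\{0_B\}}\setminus\{0\}$ you would need a lift $\sigma(b)\in\overline{\{0_G\}}$, i.e. $g\bigl(\overline{\{0_G\}}\bigr)\supseteq\overline{\{0_B\}}$; writing $B=G/A$, this amounts to $\overline{A}=A+\overline{\{0_G\}}$, which can fail. Worse, when it fails the statement itself can fail as written: for the topological extension $0\to\Q\to\R\to\R/\Q\to 0$ of first countable groups (the quotient topology on $\R/\Q$ is indiscrete), the only open neighbourhood of $0_B$ is $B$, so every Nagao neighbourhood of $0$ contains the full set of representatives $\sigma(B)$, while the usual neighbourhoods of $0$ in $\R$ intersect in $\{0\}$; hence no section recovers the topology of $\R$. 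Some Hausdorff-type hypothesis on $B$ is therefore needed; the paper's sketch imposes it implicitly by demanding $g(W_m)\neq g(W_n)$ for $m\neq n$ (impossible in this example) and otherwise defers to \cite{na}, and it holds in every application of Theorem \ref{nag2} in the paper (e.g. in Remark \ref{lnd} and in the proofs of the duality theorems the quotient groups are Hausdorff). With ``$B$ Hausdorff'' added, your proof is complete and self-contained; without it, no choice of lifts of $\overline{\{0_B\}}$ can save the argument, because the failure lies in the statement rather than in the bookkeeping. (A minor wrinkle you share with the paper's (N1): for $2$-torsion $b=-b$ the requirement $\sigma(-b)=-\sigma(b)$ forces $2\sigma(b)=0$, which a lift need not satisfy; this is an artifact of (N1) as stated, not of your construction.)
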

\begin{proof} There exists a complete system of symmetric neighborhoods of $0_{\le G}$
\[
G=W_{0}\supset W_{1}\supset W_{2}\supset\dots
\]
such that $g\lle(\le W_{m})\neq g\lle(\le W_{n})$ if $m\neq n$. Let $\sigma$ satisfy (N1) and be such that, if $b\in g\lle(\le W_{n}\setminus W_{n+1})$, then $\sigma(b)\in W_{n}\setminus W_{n+1}$. Then $\sigma(\lle g\lle(\le W_{n}))\subseteq W_{n}$ and $\sigma$ satisfies conditions {\rm (N1)-(N3)} above. See \cite[proof of Theorem 2]{na} for the details.
\end{proof}

\begin{remark} \label{ndisc} If the sequence $\mathcal S$ \eqref{seq:agb} is fixed and $\sigma^{\e\prime}$ is another section of $g$ which satisfies condition (N1), then the Nagao topologies of $G$ determined by $(\mathcal S,\sigma)$ and $(\mathcal S,\sigma^{\e\prime}\e)$ agree with each other if, and only if, the map $B\to A, b\mapsto f^{-1}(\sigma(b)-\sigma^{\e\prime}(b)),$ is continuous at $0$ \cite[Theorem 5]{na}. Consequently, if the group $B$ in the sequence $\mathcal S^{\le\prime}$ of Theorem \ref{nag2} is {\it discrete}, then the topology of $G$ is {\it unique} and agrees with the Nagao topology of $G$ determined by $(|\mathcal S^{\le\prime}|,\sigma)$ for {\it any} section $\sigma$ of $g$ which satisfies condition (N1) above.
\end{remark}

\begin{remark}\label{lnd} Let $k$ be a non-archimedean local field and let $G\in\ck$ be an extension $0\to L\overset{\!\iota}{\to} G\overset{\!\pi}{\to} A\to 0$, where $L\in\ak$ is affine and $A$ is an abelian $k$-variety. By Corollary \ref{got}\,, the given algebraic extension induces a topological extension of Hausdorff, locally compact and second countable abelian topological groups:
\begin{equation}\label{uno}
0\to\coim \iota^{\lle (\lbe 1\lbe)}\to H^{\le 1}\lbe(k,G\le)\to\img \pi^{\lle (\lbe 1\lbe)}\to 0,
\end{equation}
where $\iota^{\lle (\lbe 1\lbe)}\colon H^{\le 1}\lbe(k,L\le)\to H^{\le 1}\lbe(k,G\le)$ and $\pi^{\lle (\lbe 1\lbe)}\colon H^{\le 1}\lbe(k,G\le)\to  H^{\le 1}\lbe(k,A\le)$ are induced by $\iota$ and $\pi$, respectively. Since $\img \pi^{\lle (\lbe 1\lbe)}\subset H^{\le 1}\lbe(k,A\le)$ is discrete, Remark \ref{ndisc} shows that the given topology of $H^{\le 1}\lbe(k,G\le)$ is the unique topology of $H^{\le 1}\lbe(k,G\le)$ such that \eqref{uno} is a topological extension. It now follows from the Pontryagin duality theorem that the induced compact-open topology of $H^{\le 1}\lbe(k,G\le)^{*}$ is the unique topology on $H^{\le 1}\lbe(k,G\le)^{*}$ such that the sequence
\begin{equation}\label{dos}
\mathcal S^{\le\prime}\colon 0\to \coim (\pi^{\lle (\lbe 1\lbe)})^{*}\overset{\!\widetilde{(\lbe\pi^{(\lbe 1\lbe)}\lbe)^{\lbe*}}}{\lra} H^{\le 1}\lbe(k,G\le)^{*}\overset{\!\widetilde{(\lbe\iota^{(\lbe 1\lbe)}\lbe)^{\lbe*}}}{\lra} \img (i^{\lle (\lbe 1\lbe)})^{*}\to 0,
\end{equation}
is strict exact. Further, by Theorem \ref{nag2}\,, the topology of $H^{\le 1}\lbe(k,G\le)^{*}$ is the Nagao topology of $H^{\le 1}\lbe(k,G\le)^{*}$ determined by $(|\mathcal S^{\le\prime}|,\tau)$ for {\it any} set-theoretic section $\tau$ of $\widetilde{(\lbe\iota^{(\lbe 1\lbe)}\lbe)^{\lbe*}}$ which satisfies condition {\rm (N1)}.
\end{remark}

\section{The category of $k\e$-$1$-motives}

Let $k$ be a field.
\begin{proposition}\label{ru1} Let $L\in\ak$. Then the following holds. 
\begin{enumerate}
\item[(i)] The biduality morphism $b_{\lbe L}\colon L\to L^{\be DD}$ is an isomorphism in $\sks$.
\item[(ii)] $\extk^{1}\lbe(L^{\be D},\G_{m,\le k})=0$.
\end{enumerate}	
\end{proposition}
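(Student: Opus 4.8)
The plan is to argue by dévissage over the structure of $L$. Let $\s L\subseteq\ak$ be the class of those $L$ satisfying (i) and (ii) together with the auxiliary vanishing $\extk^{\le j}\lbe(L,\gmk)=0$ for all $j\geq 1$; this auxiliary condition holds in every base case below and is what keeps the induction self-contained. First I would show that $\s L$ is closed under subobjects, quotients and extensions in $\ak$: given a short exact sequence $0\to L^{\prime}\to L\to L^{\prime\prime}\to 0$ in $\ak$, the long exact $\extk^{\le\bullet}\lbe(-,\gmk)$-sequence propagates the auxiliary vanishing among the three terms; using it for $L^{\prime\prime}$ one obtains an exact sequence $0\to (L^{\prime\prime})^{\le D}\to L^{\le D}\to (L^{\prime})^{\le D}\to 0$, and the same long exact sequence applied to the latter transfers (ii); finally, using $\extk^{1}\lbe((L^{\prime})^{\le D},\gmk)=0$ (which is (ii) for $L^{\prime}$), the sequence $0\to (L^{\prime})^{\le DD}\to L^{\le DD}\to (L^{\prime\prime})^{\le DD}\to 0$ is exact, so the naturality of $b_{(-)}$ together with the five lemma transfers (i).

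Next I would exhibit the base cases. By the structure theory recalled in \S1.2 (passage to $L^{\le 0}$, then the exact sequence $0\to L_{\rm mult}\to L^{\le 0}\to L^{\le 0}/L_{\rm mult}\to 0$ with $L_{\rm mult}$ of multiplicative type and $L^{\le 0}/L_{\rm mult}$ unipotent), every $L\in\ak$ is an iterated extension of a finite commutative $k$-group scheme, a $k$-group of multiplicative type, and — when $\car k=p>0$ — a smooth connected commutative unipotent $k$-group. Groups of multiplicative type are iterated extensions of $\gmk$, $\mu_{n,\le k}$ and their twisted forms. For $\gmk$ one has $\gmk^{\le D}=\underline{\Z}$, $b_{\gmk}$ is the identity, and $\extk^{\le j}\lbe(\underline{\Z},\gmk)=0$ for $j\geq 1$ because the presheaf $T\mapsto H^{\le j}\lbe(T_{\fl},\G_{m,\le T})$ sheafifies to zero. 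For $\mu_{n,\le k}$ one uses the Kummer sequence $0\to\underline{\Z}\to\underline{\Z}\to\underline{\Z/n}\to 0$. For twisted forms (including the finite-group-scheme cases $\Z/\ell$, $\mu_{\ell}$, $\alpha_{p}$) one reduces to the split case, either by the adjunction $\extk^{\le j}\lbe(R_{\kp/k}F,\gmk)\simeq\extkp^{\le j}\lbe(F,\G_{m,\le\kp})$ (and its analogue for $\shomk$), or by the facts that $\extk$ commutes with the finite étale base change $\spec\kp\to\spec k$ and that an fppf sheaf vanishing after an fppf base change is zero. That $R\shomk(N,\gmk)$ reduces to the Cartier dual $N^{\le D}$ in degree $0$, with $N^{\le DD}\simeq N$, for $N$ finite commutative over $k$, is classical (\cite{sga3}, \cite{o2}). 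The unipotent case in characteristic $p$ is reduced first to the Witt vector groups $W_{n}$ (iterated extensions of $\gak$) and their twists, then by the adjunction above to $R_{\kp/k}W_{n}$, and finally, using the structure theory of unipotent groups over imperfect fields (embedding a smooth connected commutative unipotent $k$-group into a power of such an $R_{\kp/k}W_{n}$, and controlling the resulting quotient by induction on dimension together with a composition series with $1$-dimensional factors), to $\gak$ itself; for $\gak$ one computes $\gak^{\le D}=\shomk(\gak,\gmk)$ explicitly — it is a non-algebraic, pro-infinitesimal sheaf — following \cite{rus13}, and verifies (i) and (ii) for it by hand.

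I expect the genuine obstacle to be this last, positive-characteristic unipotent step. Over an imperfect field a smooth connected commutative unipotent group is in general not an iterated extension of copies of $\gak$ (wound groups, as in the Totaro-type example of the introduction), so the reduction to $\gak$ cannot stay inside $\ak$ and must pass through Weil restrictions of split unipotent groups with the resulting quotients kept under control; moreover the sheaf $\shomk(\gak,\gmk)$, though not representable by an algebraic group, must be understood precisely enough to establish biduality and the vanishing in (ii) for it. By contrast the formal dévissage and the multiplicative-type and finite cases are routine given the cited results.
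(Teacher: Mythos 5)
For this proposition the paper itself proves nothing: part (i) is quoted from \cite[Proposition 2.4.3]{ros18} and part (ii) from \cite[Lemma 1.14]{rus13}, so the whole content is the characteristic-$p$ case (over a possibly imperfect field) supplied by those references. Your d\'evissage skeleton is sensible and is broadly in the spirit of how such statements are proved, but as set up it is not grounded. The auxiliary hypothesis $\extk^{j}\lbe(L,\gmk)=0$ for \emph{all} $j\geq 1$ is more than the argument ever uses --- only $j=1$ enters, to split the dualized short exact sequences, and that vanishing for arbitrary $L\in\ak$ is \cite[Proposition 2.2.17]{ros18}, which this paper invokes elsewhere --- and, more importantly, it is merely asserted in the base cases where it is least clear: for $\gak$, $\alpha_{p}$, $\Z/p$ in characteristic $p$ the higher fppf Ext sheaves into $\gmk$ are genuinely subtle (Breen's work on higher extensions of group schemes lives in exactly this range), and you give no argument. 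Also ``closed under subobjects, quotients and extensions'' is not a clean two-out-of-three statement; in the patterns you actually need it can be arranged, but only by using the $\extk^{1}\lbe(-,\gmk)$-vanishing just mentioned, so the induction is not self-contained as claimed.

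The decisive gap is the step you yourself flag: unipotent groups in characteristic $p$ over an imperfect field, together with the verification of (i) and (ii) for $\gak^{\lbe D}$. You propose to take this from \cite{rus13}, but Russell works over a perfect field, and you cannot reduce to that case: (i) and (ii) are assertions about big fppf sheaves, and $\spec k^{\mathrm{perf}}\to\spec k$ is not an fppf covering, so nothing descends along it. The bridging tool you offer, $\extk^{j}\lbe(R_{\kp/k}F,\gmk)\simeq\extkp^{j}\lbe(F,\G_{m,\le\kp})$ ``by adjunction'', is not an adjunction: the adjoint of $R_{\kp/k}=\pi_{*}$ acts in the \emph{second} variable. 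For finite separable $\kp/k$ a statement of this kind can be salvaged (essentially because $\pi_{!}\simeq\pi_{*}$ for finite \'etale $\pi$), which is why your multiplicative and finite \'etale reductions are fine; but the extensions that occur when splitting forms of $\gak$ and wound unipotent groups are purely inseparable, and there the claim fails already for $\shomk$: if $[\kp:k]=p$ is purely inseparable and $F=\Z/p$, then $R_{\kp/k}(\Z/p)=\Z/p$, so $\shomk(R_{\kp/k}F,\gmk)=\mu_{p,k}$, whereas the Cartier dual of $F$ over $\kp$ is $\mu_{p,\kp}$, whose Weil restriction $R_{\kp/k}\e\mu_{p}$ is positive-dimensional (inseparable Weil restriction does not even preserve finiteness); duality simply does not commute with inseparable Weil restriction. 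Likewise the asserted embedding of an arbitrary smooth connected commutative unipotent $k$-group into a power of some $R_{\kp/k}W_{n}$ is a nontrivial structural input you neither prove nor reference. So the heart of the proposition --- biduality and the $\extk^{1}$-vanishing for duals of (wound) unipotent groups over imperfect fields --- remains unproved in your plan; this is precisely what the paper imports from \cite[Proposition 2.4.3]{ros18} and \cite[Lemma 1.14]{rus13} instead of reproving.
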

\begin{proof} For (i), see \cite[Proposition 2.4.3]{ros18}. For (ii), see \cite[Lemma 1.14]{rus13}.
\end{proof}

\smallskip

\begin{remark} The following alternative proof of part (i) of the proposition (over any artinian ring) involves formal groups (of a certain type) which have played a central role in other treatments of the theory of $1$-motives, such as those in \cite{lau} and \cite{rus13}\,\footnote{\e As in \cite{ros18}, we view the ``dual algebraic" formal groups $L^{\be D}$, where $L\in\ak$, exclusively as fppf sheaves on $\spec k$.}\,. Let $S=\spec R$, where $R$ is an artinian ring, and let $A$ be a cocommutative cogroup in the category of profinite $R$-algebras \cite[${\rm VII}_{\rm B}$, 2.1]{sga3}. By \cite[${\rm VII}_{\rm B}$, Remark 0.1.2(d)]{sga3}, the commutative formal $R$-group $\mathscr F=\spf\e A$ is a formal scheme in the sense of \cite[I, 10.4.2]{ega}. Thus, by \cite[0AHY, Lemmas 85.2.1 and 85.2.2]{sp}, the assignment $\mathscr F\mapsto (\e h_{\mathscr F}\colon R_{\e\fl}\to\ab, h_{\mathscr F}(T\le)=\Hom_{\e\rm For\, Sch}(T, \mathscr F\le))$ embeds the category of commutative formal $R$-groups as a full subcategory of $R_{\le\fl}^{\e\sim}$. Now, if $\mathscr F=\spf\, A$ is a topologically flat commutative formal $R$-group, then $B=\Hom_{R} (A, R\le)$ is a commutative and cocommutative flat Hopf $R$-algebra, whence $D^{\e\prime}\lbe(\mathscr F\e)=\spec B$ is a commutative affine flat $R$-group scheme. The canonical isomorphisms in \cite[${\rm VII}_{\rm B}$, 2.2.2, bottom of p.~546]{sga3} (with $T=\mathbb Z_{\lle R}$ there) induce an isomorphism $D^{\e\prime}\lbe(\mathscr F\e)\simeq \mathscr F^{\lle D}$ in $R_{\le\fl}^{\e\sim}$. On the other hand, if $L=\spec B$ is a commutative affine flat $R$-group scheme, then $A=\Hom_{R} (B, R\le)$ is a cocommutative cogroup in the category of topologically flat profinite $R$-algebras \cite[${\rm VII}_{\rm B}$, Lemma 1.3.5.A]{sga3}, whence $D(L)=\spf\, A$ is a topologically flat commutative formal $R$-group. By \cite[${\rm VII}_{\rm A}$, 3.1.2 and Proposition 3.3.0, and ${\rm VII}_{\rm B}$, 1.3.5.C(1)]{sga3}, there exist canonical isomorphisms $D(L)\simeq \spec^{\be*}\lbe B\simeq L^{\lbe D}$ in $R_{\le\fl}^{\e\sim}$. Further, there exists a commutative diagram in $R_{\le\fl}^{\e\sim}$
\[
\xymatrix{L\ar[dr]_(.4){\simeq}\ar[r]^(.5){b_{L}}&L^{\be DD}\ar[d]^(.43){\simeq}\\
&D^{\e\prime}\lbe(D(L)),
}
\]
where the oblique map is the isomorphism in \cite[${\rm VII}_{\rm B}$, 2.2.2, p.~546, line -5]{sga3} and the vertical map is the composition of canonical isomorphisms $L^{\lbe DD}=(L^{\lbe D})^{D}\simeq D(L)^{D}\simeq
D^{\e\prime}\lbe(D(L))$. We conclude that the biduality morphism $b_{L}\colon L\to L^{\be DD}$ is an isomorphism in $R_{\le\fl}^{\e\sim}$. 
\end{remark}

\smallskip

\begin{definition}\label{1mot} Let $k$ be a field. A {\it $k\le$-$1$-motive} is a triple
\[
\mathcal M=(K^{\lbe D}\be,u\e,\mathcal E(L,\iota, G,\pi, A)\le),
\]
where $K,L,G,A\in\ck$, $K$ and $L$ are affine, $A$ is an abelian variety, $u\colon K^{\be\lle D}\to G$ is a morphism in $\sks$ and $\mathcal E(L,\iota, G,\pi, A)$ is an extension $0\to L\overset{\!\be\iota}{\to} G\overset{\!\pi}{\to}A\to 0$ in $\ck$.

The {\it complex associated to $\mathcal M$} is the two\e-\le term complex
\begin{equation}\label{comp}
M=C^{\le\bullet}\lbe(u)=[\le K^{\lbe D}\!\overset{\!u}{\to} G\e\le]\e\in\cbk.
\end{equation}
Note that $K^{\be\lle D}$ is in degree $-1$.

A {\it morphism of $k$-$1$-motives} $\mathcal M_{1}\to \mathcal M_{2}$ is a quadruple
\begin{equation}\label{mor1}
(\kappa^{\lbe D}\be, \lambda, \gamma,\alpha)\colon (K_{1}^{\be\lle D}, u_{1}, \mathcal E(L_{1},\iota_{1}, G_{1},\pi_{1}, A_{1}))\to (K_{2}^{\be\lle D}, u_{2}, \mathcal E(L_{2},\iota_{2}, G_{2},\pi_{2}, A_{2})),
\end{equation}
where $\kappa\colon K_{2}\to K_{1}$ is a morphism in $\ck$, 
\[
(\lambda, \gamma, \alpha)\colon\mathcal E(L_{1},\iota_{1}, G_{1},\pi_{1}, A_{1})\to \mathcal E(L_{2}, \iota_{2},G_{2},\pi_{2}, A_{2})
\]
is a morphism of extensions, i.e., the diagram
\begin{equation}\label{dia}
\xymatrix{0\ar[r]&L_{1}\ar[d]^{\lambda}\ar[r]^{\iota_{1}}&G_{1}\ar[d]^{\gamma}\ar[r]^{\pi_{1}}&A_{1}\ar[d]^{\alpha}
\ar[r]&0\\
0\ar[r]&L_{2}\ar[r]^{\iota_{2}}&G_{2}\ar[r]^{\pi_{2}}&A_{2}\ar[r]&0
}
\end{equation}
commutes, and $(\kappa^{\lbe D},\gamma)\colon M_{1}\to M_{2}$ is a morphism of associated complexes, i.e., the diagram in $\sks$
\begin{equation}\label{sd}
\xymatrix{K_{1}^{\be\lle D}\ar[d]_(.45){\kappa^{\lbe D}}\ar[r]^{u_{1}}&G_{1}\ar[d]^(.45){\gamma}\\
K_{2}^{\be\lle D}\ar[r]^{u_{2}}&G_{2}
}
\end{equation}
commutes.

\end{definition}
We will write $\s{M}_{k,\e 1}$ for the category of $k\le$-$1$-motives defined above and $\s K_{\le k}$ for the full subcategory of $\cbk$ whose objects are the complexes of the form $[\le K^{\be\lle D}\!\overset{\!u}{\to} G\e\le]$, where $K\in\ak$ and $G\in\ck$. 
The commutativity of diagram \eqref{sd} shows that the assignment
\begin{equation}\label{fun}
\s{M}_{k,\e 1}\to \s K_{\le k}, \mathcal M\mapsto M,
\end{equation}
where $M$ is given by \eqref{comp}, is a covariant functor from $\s{M}_{k,\e 1}$ to $\s K_{\le k}$.

\begin{remark} (Cf. \cite[1.1.3]{jos}) The functor \eqref{fun} is {\it not} full (in particular, it is not an equivalence of categories). Indeed, if $G_{i}$ is an extension $\mathcal E(L_{i},\iota_{i}, G_{i},\pi_{i}, A_{i})$, where $i=1$ or $2$, then a morphism 
$(\kappa^{\lbe D},\gamma)\colon [\le K_{1}^{\lbe D}\overset{\!\!\be u_{1}}{\to} G_{1}\e\le]\to 
[\le K_{2}^{\lbe D}\overset{\!\!\be u_{2}}{\to} G_{2}\e\le]$ in $\s K_{\le k}$ need not arise from a morphism of $k\e$-$1$-motives \eqref{mor1}. This is the case, however, if $L_{1}$ is {\it smooth and connected}. In effect, by Lemma \ref{triv}\,, the composite morphism $L_{1}\overset{\!\iota_{\lbe 1}}{\to} G_{1}\overset{\!\gamma}{\to}G_{2}\overset{\!\!\be\pi_{2}}{\to} A_{2}$ is trivial, whence there exist morphisms $\lambda\colon L_{1}\to L_{2}$ and $\alpha\colon A_{1}\to A_{2}$ such that the resulting diagram \eqref{dia} commutes. Thus \eqref{fun} induces a full functor $\s{M}_{k,\e 1}^{\e\sm}\to \s K_{\le k}^{\lle\sm}$, where $\s K_{\le k}^{\lle\sm}$ is the full subcategory of $\cbk$ whose objects are the complexes of the form $[\le K^{\be\lle D}\!\overset{\!u}{\to} G\e\le]$, where $G$ is given as an extension $0\to L\to G\to A\to 0$ (as above) and $K,L\in\ak$ are both smooth and connected. 
\end{remark}

\smallskip

Let $\mathcal M=(K^{\be\lle D}\be,u\e,\mathcal E(L,\iota, G,\pi, A)\le)\in \s{M}_{k,\e 1}$ be a $k\e$-$1$-motive with associated complex $M=[\le K^{\be\lle D}\!\overset{\!u}{\to} G\e\le]\in \s K_{\le k}$. There exist canonical morphisms of abelian groups 
\begin{equation}\label{phi}
\Phi_{\lbe M}\colon \Hom_{\e \sks}\lbe(K^{\lbe D}\be,A\le)\to \ext^{1}\be(A^{t}\be,K\le)
\end{equation}
and
\begin{equation}\label{psi}
\Psi_{\be M}\colon \ext^{1}\be(A^{t}\be,K\le)\to \Hom_{\e \sks}\lbe(K^{\be\lle D}\be,A\le)
\end{equation}
which are defined as follows (cf. \cite[pp.~864-865]{rus13}). Let $f\colon K^{\be\lle D}\!\to A$ be a morphism in $\sks$. By Proposition \ref{ru1}(ii)\,, $\extc^{\e 2}(K^{\lbe D}[1],\G_{m,\e k})=\extk^{\e 1}(K^{\be\lle D},\G_{m,\e k})=0$. Further, $A^{\lbe D}=0$ by \cite[VII, 1.3.8]{sga7}. Consequently, the canonical exact sequence in $\cbk$  \eqref{cex}
\[
0\to A\overset{\!i}{\to} C^{\le\bullet}\lbe(\le f\le)\overset{\!q}{\to} K^{\lbe D}[1]\to 0
\]
induces an exact sequence in $\sks$
\[
0\to K^{\lbe DD}\!\overset{q^{\le\prime}}{\to} \extc^{\e 1}\lbe(\e C^{\le\bullet}\lbe(\le f\le),\G_{m,\e k})\overset{i^{\le\prime}}{\to}  \extk^{1}\be(A,\G_{m,\le k})\to 0,
\]
where $q^{\le\prime}=\extc^{\e 1}(q,\G_{m,\e k})$ and $i^{\le\prime}=\extc^{\e 1}(i,\G_{m,\e k})$. The element $\Phi_{\lbe M}(\le f\le)\!\in\! \ext^{1}\be(A^{t}\be,K\le)$ is the equivalence class of the extension
\begin{equation}\label{psiv}
0\to K\overset{q^{\le\prime}\be\circ\le b_{\lbe K}}{\to}\!\extc^{\e 1}\lbe(\e C^{\le\bullet}\lbe(\le f\le),\G_{m,\e k})\overset{\beta_{\be A}^{\le -1}\circ\e i^{\le\prime}}{\to}\!\! A^{t}\to 0,
\end{equation}
where $b_{K}\colon K\isoto K^{DD}$ is the biduality isomorphism associated to $K$ and $\beta_{\lbe A}\colon A^{t}\isoto
\extk^{1}\be(A,\G_{m,\le k})$ is the Barsotti-Weil isomorphism associated to $A$.

In order to define \eqref{psi}, let $\mathcal E\in \ext^{1}\be(A^{t}\be,K\le)$ be represented by the extension
\[
\xi\colon \quad 0\to K\overset{\!j}{\to} E\overset{\! p}{\to} A^{t}\to 0.
\]
Since $(A^{t})^{\lbe D}=0$, the above sequence induces an exact sequence in $\sks$
\[
0\to E^{D}\to K^{\lbe D}\,\overset{\!\!\!\partial_{\le\mathcal E}}{\to}\extk^{1}\be(A^{t},\G_{m,\le k})\to\extk^{1}\be(E,\G_{m,\le k}),
\]
where the map $\partial_{\le\mathcal E}$ is defined as follows: for every $k$-scheme $T$ and every $\chi\in K^{\lbe D}\lbe(T\le)=\Hom_{\, T_{\fl}^{\le\sim}}\lbe(K_{T},\G_{m,\e T})$, let $E_{\e T}^{\e\prime}=\cok\be[(\e j_{\e T},-\chi)\colon K_{T}\be\to\! E_{\e T}\le\oplus\le \G_{m, \e T}\le]$
be the pushout of $j_{\e T}$ and $\chi\e$ in $T_{\fl}^{\le\sim}$. Then there exists a canonical exact and commutative diagram in $T_{\fl}^{\le\sim}$
\[
\xymatrix{\xi_{\le T}\colon\quad 0\ar[r]&K_{T}\ar[r]^{j_{\le T}}\ar[d]_(.45){\chi}& E_{\e T}\ar[r]^{p_{\le T}}\ar[d]_(.48){\sigma}& A^{t}_{\le T}\ar[r]\ar@{=}[d]&0\\
\xi^{\e\prime}_{\le T}\colon\quad 0\ar[r]&\G_{m,\e T}\ar[r]^{j_{\le T}^{\e\prime}}&E_{\e T}^{\e\prime}\ar[r]^{p_{\le T}^{\e\prime}}&A^{t}_{\le T}\ar[r]&0,
}
\]
where the map $\sigma$ (respectively, $j_{\le T}^{\e\prime}$) is induced by the canonical injection of $E_{\e T}$ (respectively, $\G_{m, \e T}$) into $E_{\e T}\oplus \G_{m, \e T}$ and the map $p_{\le T}^{\e\prime}$ is induced by the composition $E_{\e T}\oplus \G_{m, \e T}\to E_{\e T}\overset{\!\!p_{\le T}}{\to}A_{\e T}^{t}$. Let $\mathcal E^{\le\prime}_{\le T}\in {\rm Ext}^{1}_{\,T_{\fl}^{\le\sim}}\be(A^{t}_{\e T},\G_{m,\e T}\le)$ be the equivalence class of the extension $\xi^{\e\prime}_{\le T}$. Then 
\[
\partial_{\le\mathcal E}(\lle T\le)(\chi)=a_{\le T}(\mathcal E^{\le\prime}_{\le T})\in \extk^{1}\be(A^{t},\G_{m,\le k})(\e T\e),
\]
where $a_{\le T}\colon {\rm Ext}^{1}_{\,T_{\fl}^{\le\sim}}\be(A^{t}_{\e T},\G_{m,\e T}\le)\to \extk^{1}\be(A^{t},\G_{m,\le k})(\e T\e)$ is the adjoint morphism \eqref{adj}. The map \eqref{psi} is then given by
\[
\Psi_{\be M}(\lbe \mathcal E\le)=\kappa_{\lbe A}^{-1}\lbe\circ\lbe \beta_{\be A^{t}}^{\le -1}\!\circ\partial_{\le\mathcal E}\,\in\, \Hom_{\le k_{\fl}^{\sim}}\lbe(K^{\lbe D}\be,A\le),
\]
where $\beta_{\be A^{t}}\colon A^{tt}\isoto \extk^{1}\be(A^{t},\G_{m,\le k})$ is the Barsotti-Weil isomorphism associated to $A^{t}$ and $\kappa_{A}\colon A\to A^{tt}$ is the biduality isomorphism associated to $A$.

\begin{lemma} The maps \eqref{phi} and \eqref{psi} are mutually inverse isomorphisms of abelian groups
\end{lemma}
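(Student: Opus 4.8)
The plan is to show that the composites $\Psi_{\be M}\circ\Phi_{\lbe M}$ and $\Phi_{\lbe M}\circ\Psi_{\be M}$ are the respective identity maps; since both $\Phi_{\lbe M}$ and $\Psi_{\be M}$ are already visibly homomorphisms of abelian groups (being composites of the additive functors $\extc^{\e 1}(-,\G_{m,\e k})$, pushouts, and the fixed isomorphisms $b_{\lbe K}$, $\beta_{\be A}$, $\beta_{\be A^{t}}$, $\kappa_{\lbe A}$), it suffices to check the set-theoretic inverse property. First I would fix a morphism $f\colon K^{\lbe D}\to A$ in $\sks$, trace through the definition of $\Phi_{\lbe M}(\le f\le)$ as the extension \eqref{psiv}, and then feed that extension into the construction of $\Psi_{\be M}$. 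The key point is to identify, for each $k$-scheme $T$ and each character $\chi\in K^{\lbe D}\lbe(T\le)$, the pushout $E_{\e T}^{\e\prime}$ of \eqref{psiv} along $\chi$, and to recognize its class in ${\rm Ext}^{1}_{\,T_{\fl}^{\le\sim}}\be(A^{t}_{\e T},\G_{m,\e T}\le)$; after applying $\beta_{\be A^{t}}^{\le -1}$ and $\kappa_{\lbe A}^{-1}$ this should return precisely $f$ evaluated appropriately, i.e.\ $\partial_{\mathcal E}=\kappa_{\lbe A}\circ\beta_{\be A^{t}}\circ f$.

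The main technical tool will be the compatibility of the two dévissage exact sequences attached to a short exact sequence of complexes. Concretely, the extension \eqref{psiv} is built from the mapping-cone sequence $0\to A\to C^{\le\bullet}\lbe(\le f\le)\to K^{\lbe D}[1]\to 0$ by applying $\extc^{\bullet}(-,\G_{m,\e k})$, while $\Psi_{\be M}$ is built from applying the same functor to the Yoneda-type sequence $0\to K\to E\to A^{t}\to 0$ and then evaluating the connecting map on characters of $K$. I would therefore set up a commutative diagram relating the long exact $\extc$-sequence of the mapping cone to the long exact $\extc$-sequence of \eqref{psiv}, using the biduality isomorphisms $b_{\lbe K}$ and $\kappa_{\lbe A}$ (Proposition \ref{ru1}(i) and the biduality theorem for abelian varieties) to match the outer terms, and the vanishing results $\extk^{1}\lbe(K^{\lbe D},\G_{m,\e k})=0$ (Proposition \ref{ru1}(ii)) and $A^{\lbe D}=(A^{t})^{\lbe D}=0$ to ensure the relevant sequences degenerate to the short exact sequences used in both definitions. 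Chasing an element around this diagram gives $\Psi_{\be M}\lbe(\lbe\Phi_{\lbe M}(\le f\le)\le)=f$.

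For the reverse composite, I would start with an arbitrary class $\mathcal E\in\ext^{1}\be(A^{t}\be,K\le)$ represented by $\xi\colon 0\to K\to E\to A^{t}\to 0$, form $g=\Psi_{\be M}(\lbe\mathcal E\le)\colon K^{\lbe D}\to A$ via the connecting map $\partial_{\mathcal E}$, and then compute $\Phi_{\lbe M}(\le g\le)$. The crucial identification is that the complex $C^{\le\bullet}\lbe(\le g\le)=[\le K^{\lbe D}\overset{g}{\to}A\e]$, after applying $\extc^{\e 1}(-,\G_{m,\e k})$ and using $\beta_{\be A}$, reproduces exactly the extension $\xi$ back again; this is essentially the statement that the Barsotti--Weil / Cartier-duality formalism is ``reflexive'' on the level of these $1$-motive data, which is where Proposition \ref{ru1} and the biduality theorem for $A$ do the real work. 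The hard part will be the bookkeeping in the first composite: keeping track of how the adjunction morphism $a_{\le T}$ from \eqref{adj} interacts with sheafification of the $\extk$-presheaves, and verifying that the pushout appearing in the definition of $\partial_{\mathcal E}$ is compatible, functorially in $T$, with the pushout/pullback manipulations hidden inside the mapping-cone sequence \eqref{psiv}. Once that compatibility is in hand, both identities follow from a diagram chase, and the homomorphism property is automatic, completing the proof.
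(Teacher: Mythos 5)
Your proposal is correct and is essentially the paper's own argument: the paper's proof is the single remark that it is formally the same as Jossen's proof of \cite[Theorem 1.2.3]{jos}, with Proposition \ref{ru1}(i) supplying the biduality $b_{K}\colon K\isoto K^{\lbe DD}$ for an arbitrary affine $K$, and that proof is precisely the two-composite verification (via the mapping-cone and pushout d\'evissages, the vanishings $\exts^{1}(K^{\lbe D},\G_{m,\le k})=A^{\lbe D}=(A^{t})^{\lbe D}=0$, and the Barsotti--Weil and biduality isomorphisms) that you outline. The only slip is notational: for $\Psi_{M}(\Phi_{M}(f))=f$ the identity to check is $\partial_{\le\mathcal E}=\beta_{\be A^{t}}\circ\kappa_{\lbe A}\circ f$ rather than $\kappa_{\lbe A}\circ\beta_{\be A^{t}}\circ f$, since $\kappa_{\lbe A}\colon A\to A^{tt}$ must be applied before $\beta_{\be A^{t}}\colon A^{tt}\isoto\extk^{1}\be(A^{t},\G_{m,\le k})$.
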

\begin{proof} The proof is formally the same as that of \cite[Theorem 1.2.3]{jos}, using Proposition \ref{ru1}(i).
\end{proof}

Let $\mathcal M=(K^{\lbe D}\be,u\e,\mathcal E(L,\iota, G,\pi, A)\le)$ be a $k\e$-$1$-motive. By \eqref{cex}, $M=[\le K^{\be\lle D}\!\overset{\!u}{\to} G\e\le]$ fits into an exact sequence in $\cbk$
\begin{equation}\label{vid}
0\to G\to M\to K^{D}[1]\to 0.
\end{equation}
Now let  
\[
\mathcal M^{\le\prime}=(K^{\lbe D}, \pi\be\circ\be u,\mathcal E(0,0, A,{\rm Id}_{A}, A\le)). 
\]
Then $\mathcal M^{\le\prime}$ is a $k\e$-$1$-motive with associated complex
\[
M^{\le\prime}=C^{\le\bullet}(\pi\be\circ\be u)=[\le K^{\lbe D}\e\overset{\!\pi\e\circ\e u}{\to} A\e\le].
\]
We have a canonical exact sequence in $\cbk$
\begin{equation}\label{can}
0\to L\overset{\!\!\alpha}{\to} M\overset{\!\beta}{\to} M^{\le\prime}\to 0,
\end{equation}
where $\alpha=(0,\iota)\colon[0\!\to\! L\e]\!\to\![K^{D}\!\to\! G\e]$ and $\beta=({\rm Id}_{K^{\lbe D}},\pi\e)\colon [K^{\lbe D}\!\to\! G\e]\!\to\! [K^{\lbe D}\!\to \!A\e]$.
Now set
\begin{equation}\label{gst0}
\widetilde{G}=\extc^{\e 1}\lbe(\e M^{\le\prime},\G_{m,\e k})\in \sks.
\end{equation}
By \eqref{psiv} (with $f=\pi\be\circ\be u$), $\widetilde{G}$ is an extension in $\sks$
\begin{equation}\label{gst}
0\to K\overset{\!\tilde{\iota}}{\to} \widetilde{G}\overset{\!\widetilde{\pi}}{\to} A^{t}\to 0
\end{equation}
which represents the equivalence class $\Phi_{\be M}(\pi\be\circ\be u)\in \ext^{1}\lbe(A^{t}\be,K\le)$. Consequently, $\widetilde{G}\in\ck$\,\footnote{\e The representability of $\widetilde{G}$ is explained in \url{https://mathoverflow.net/questions/339994}.} and \eqref{can} induces a morphism in $\sks$
\begin{equation}\label{vmor}
v\colon L^{\be D}\to \widetilde{G}.
\end{equation}

\begin{definition} The dual of the $k$-$1$-motive
$\mathcal M=(K^{\be\lle D}\be,u\e,\mathcal E(L,\iota, G,\pi, A)\le)$ is the $k$-$1$-motive
$\mathcal M^{\le\vee}=(L^{\be D}, v, \mathcal E(K,\tilde{\iota},\widetilde{G},\widetilde{\pi}, A^{\lle t}))$, where $\mathcal E(K,\tilde{\iota},\widetilde{G},\widetilde{\pi}, A^{\lle t})$ is the extension \eqref{gst} and $v$ is the map \eqref{vmor}. The complex $[\le L^{\be D}\!\overset{\!\!v}{\to}\! \widetilde{G}\e]\in \s K_{\le k}$ associated to $\mathcal M^{\lle\vee}$ will be denoted by $M^{\le\vee}$, i.e.,
\[
M^{\le\vee}=[\le K^{\lbe D}\!\overset{\!\!u}{\to} G\e\le]^{\le \vee}=[\le L^{\be D}\!\overset{\!\!v}{\to} \widetilde{G}\,].
\]
If $K=0$ above, the $k$-$1$-motives $\mathcal M$ and $\mathcal M^{\le\vee}$ will be called
{\it pure}.
\end{definition}

\medskip

The symmetry of the construction of $M^{\lle \vee}$ shows that there exists a biduality isomorphism $b_{M}\colon M\isoto M^{\lle \vee\lle\vee}$ in $\s K_{\le k}$ which is functorial in $M\e$ (cf. \cite[Remark 1.22 and Proposition 1.23]{rus13}).

\smallskip

\begin{remark} Assume that $\car k>0$ and let $G\in\ck$ be given as an extension $\mathcal E=\mathcal E(L,\iota, G,\pi, A)$. The pair $(G,\mathcal E)$ defines a $k$-$1$-motive $(G,\mathcal E\le)=(0\e,0\e,\mathcal E(L,\iota, G,\pi, A)\le)$ whose dual is the $k$-$1$-motive $(G,\mathcal E\le)^{\vee}=(L^{\be D}\be, v, \mathcal E(0,0, A^{t},{\rm Id}_{A^{t}}, A^{t}\le))$, where $v\colon L^{\be D}\to A^{t}$ is obtained by applying the functor $\extk^{1}\lbe(-,\G_{m,\le k})$
to $\mathcal E$ followed by an application of the generalized Barsotti-Weil formula $\extk^{1}\be(A,\G_{m,\le k})\simeq A^{t}$. Let $\mathcal E_{\e\sab}=\mathcal E(T,\iota|_{T},G_{\sab},\alb_{\le G_{\sab}}, B\e)$ be the top row of diagram \eqref{epi2}, where $T=\krn\le\alb_{\le G_{\sab}}$ and $B=\Alb\le(G_{\sab})$. The morphism of extensions $(\lambda,\gamma,\psi\e)\colon \mathcal E_{\e\sab}\to \mathcal E$ \eqref{mext} induces a morphism of $k$-$1$-motives $(G,\mathcal E\le)^{\vee}\to(G_{\sab},\mathcal E_{\e\sab})^{\vee}$ which corresponds to a surjective morphism in $\s K_{\le k}$
\[
(\lambda^{\! D},\psi^{\le t}\e)\colon G^{\le\vee}=[\e L^{\be D}\to A^{t}\e]\onto (\lbe G_{\sab}\lbe)^{\vee}=[\e T^{\le D}\to B^{\le t}\e],
\]
where $\lambda^{\be D}\colon L^{\be D}\onto T^{\lle D}$ is the dual of the inclusion $\lambda\colon T\into L$ and $\psi^{\e t}\colon A^{t}\onto B^{\le t}$ is the isogeny dual to $\psi\colon B\to A$. We have $\krn(\lambda^{\! D},\psi^{\le t}\e)=[\e P^{\le D}\to F^{D}\e]\in \s K_{\le k}$, where $P=\cok\e\lambda$, $F=\krn \psi$ and $F^{D}$ is placed in degree $0$. Now the exact sequence \eqref{fpq} yields a quasi-isomorphism $Q^{D}[1]\simeq \krn(\lambda^{\! D},\psi^{\le t}\e)$ in $\s K_{\le k}$, where $Q=G/G_{\sab}\in\ak$. It follows that, up to quasi-isomorphism in $\s K_{\le k}$ (i.e., in the derived category  $D(\lbe\s K_{\le k})$), every pure $k$-$1$-motive of the form $G^{\le\vee}$ over a field $k$ of positive characteristic is ``an extension of a pure Deligne $1$-motive by a dual algebraic formal group". More precisely, there exists a distinguished triangle in $D(\lbe\s K_{\le k})$
\[
Q^{D}[1]\to G^{\le\vee}\to G_{\sab}^{\le\vee}\to Q^{D}[2].
\]
\end{remark}

\bigskip

Let $G\in\ck$ be given as an extension $\mathcal E(L,\iota, G,\pi, A)$, let $\mathcal P\in {\rm Biext}^{1}\lbe(A,A^{t}\le;\G_{m,\e k})$ be the Poincar\'e biextension of $(A\lle,A^{t}\le)$ by $\G_{m,\e k}$ and let 
$\mathcal P^{\e\prime}\in {\rm Biext}^{1}\lbe(G,\widetilde{G}\e;\G_{m,\e k})$ be the pullback of $\mathcal P$ along $(\pi,\widetilde{\pi}\le)\colon (G,\widetilde{G}\le)\to(A,A^{t}\le)$
\cite[VII.2.4 and VIII.3.2]{sga7}, where $\widetilde{G}$ and $\widetilde{\pi}$ are given by \eqref{gst0} and \eqref{gst}, respectively. Then $\mathcal P^{\e\prime}$ is a $\G_{m,\e k}\e$-\e torsor over $G\times_{k}\widetilde{G}$ whose pullbacks $(\e{\rm Id}_{\e G},v)^{*}(\mathcal P^{\e\prime}\le)$ and $(u,{\rm Id}_{\e \widetilde{G}})^{*}(\mathcal P^{\e\prime}\le)$ are trivial $\G_{m,\e k}$-torsors over $G\times_{k}L^{\be D}$ and $K^{\lbe D}\times_{\lbe k}\e \widetilde{G}$, respectively. Further, since $(\e{\rm Id}_{\e G},v)\circ(u,\e{\rm Id}_{\e \widetilde{G}})=(u,\e{\rm Id}_{\e \widetilde{G}})\circ(\e{\rm Id}_{\e G},v)=(u\le,v)$, we have
\[
(u,\e{\rm Id}_{\e \widetilde{G}}\lbe)^{*}(\e{\rm Id}_{\e G},v)^{*}(\mathcal P^{\e\prime}\le)=(\e{\rm Id}_{\e G},v)^{*}(u,{\rm Id}_{\e \widetilde{G}}\lbe)^{*}(\mathcal P^{\e\prime}\le).
\]
Consequently, $\mathcal P^{\e\prime}$ defines a biextension of $(M,M^{\vee})$ by $\G_{m,\e k}$ (in the sense of \cite[10.2.1]{th3}). Under the isomorphism
\[
{\rm Biext}^{1}\lbe(M,M^{\vee}\le;\G_{m,\e k})\simeq\Hom_{\dbk}(M\otimes^{\mathbf{L}}\!M^{\le \vee},\G_{m,\e k}[1])
\]
of \cite[10.2.1]{th3}, the isomorphism class of $\mathcal P^{\e\prime}$ corresponds to a map
\[
M\otimes^{\mathbf{L}}\!M^{\le \vee}\to \G_{m,\e k}[1]
\]
in $\dbk$ which induces cup product pairings of (derived functor) fppf hypercohomology groups
\[
H^{\le r}\be(k,M\e)\times H^{\le s}\lbe(k,M^{\lle \vee})\to H^{\le r+s+1}(k,\G_{m,\e k})
\]
for all integers $r,s$. See \cite[tags 068G and 01FP]{sp}. In particular, for every integer $r$, there exists a canonical pairing of abelian groups
\begin{equation}\label{bpair}
H^{\le r}\lbe(k,M\e)\times H^{\le 1-r}\lbe(k,M^{\le \vee})\to \br\e k.
\end{equation}
These pairings are functorial in $M$.

\section{Proof of the main theorem}

Henceforth, $k$ denotes a {\it non-archimedean} local field.

\smallskip

Let $L\in\ak$ be an affine commutative algebraic $k$-group.  By 
\cite[Theorem 3.2.3 and Lemma 3.5.1]{ros18}, $L^{\be D}\lbe(k)$ is finitely generated and $H^{\lle 2}\lbe(k, L^{\be D}\lle)$ is torsion. If $\car k=0$ or $r\neq 1$, $H^{\lle r}\lbe(k, L^{\be D}\lle)$ will be equipped with the {\it discrete} topology. If $k$ is a local function field, $H^{\lle 1}\lbe(k, L^{\be D})$ will be equipped with the topology introduced in \cite[\S3.6]{ros18}. This topology is defined as follows. By \cite[Lemma 2.1.7]{ros18}, $L$ is an extension
\begin{equation}\label{seq}
0\to H\to L\to U\to 0,
\end{equation}
where $H$ is an extension of a finite $k$-group by a $k$-torus and $U$ is a $k$-split unipotent $k$-group. By \cite[Proposition 2.3.1]{ros18}\e, the above sequence induces an exact sequence
$0\to U^{D}\to L^{\be D}\to H^{\lbe D}\to 0$ in $\sks$. Since $H^{\lle 1}\lbe(k, U^{\lbe D})=0$ by \cite[Proposition 2.5.3(iii)]{ros18}, the latter sequence yields an injection $H^{\lle 1}\lbe(k, L^{\be D})\hookrightarrow H^{\lle 1}\lbe(k, H^{\lbe D})$ and we identify $H^{\lle 1}\lbe(k, L^{\be D})$ with its image in $H^{\lle 1}\lbe(k, H^{\lbe D})$ under the preceding injection. Now, by \cite[Proposition 2.3.5]{ros18}, $H^{\lbe D}\in\ak$ is representable, whence $H^{\lle 1}\lbe(k, H^{\lbe D})$ carries the topology discussed in subsection \ref{tops}\,. The subgroup $H^{\lle 1}\lbe(k, L^{\be D})\subset H^{\lle 1}\lbe(k, H^{\lbe D})$ is then equipped with the corresponding subspace topology.

\begin{lemma} \label{rle} Let $L\in\ak$. Then the topology on $H^{\lle 1}\lbe(k, L^{\be D})$ defined above is independent of the choice of the sequence \eqref{seq}. Further, $H^{\lle 1}\lbe(k, L^{\be D})$ is locally profinite, second countable and of finite exponent.
\end{lemma}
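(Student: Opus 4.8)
This is essentially \cite[\S3.6]{ros18}; I outline how the argument goes. The plan is to reduce everything to properties of $H^{1}(k,H^{\lbe D})$, where $H$ comes from a chosen sequence \eqref{seq}, and to compare two choices by a dévissage through finite group schemes. When $\car k=0$, i.e. $k$ is a $p$-adic field, the topology in question is discrete by convention, so the independence assertion is vacuous; moreover, writing $T$ for the maximal torus of $H$, $\Lambda=T^{\lbe D}$ for its (finite free) character module and $F=H/T$, Cartier duality gives an extension $0\to F^{\lbe D}\to H^{\lbe D}\to\Lambda\to0$ in $\sks$ in which $H^{1}(k,\Lambda)$ is finite (over a finite splitting extension $k^{\le\prime}\!/k$ for $\Lambda$, inflation-restriction identifies $H^{1}(k,\Lambda)$ with $H^{1}(\mathrm{Gal}(k^{\le\prime}\!/k),\Lambda)$, since $\Hom_{\e\mathrm{cts}}(\varGamma_{k^{\le\prime}},\Lambda)=0$ as $\varGamma_{k^{\le\prime}}$ is profinite and $\Lambda$ torsion-free) and $H^{1}(k,F^{\lbe D})$ is finite (finiteness of the cohomology of finite flat group schemes over a $p$-adic field). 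Hence $H^{1}(k,H^{\lbe D})$ and its subgroup $H^{1}(k,L^{\be D})$ are finite, and a finite discrete group is profinite (a fortiori locally profinite), second countable and of finite exponent.

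Assume now $\car k=p>0$. First I would record that $H^{1}(k,H^{\lbe D})$ is locally profinite, second countable and of finite exponent. With the extension $0\to F^{\lbe D}\to H^{\lbe D}\to\Lambda\to0$ as above, Proposition~\ref{fc} applied to the affine $k$-group $F^{\lbe D}$ gives these three properties for $H^{1}(k,F^{\lbe D})$, while $H^{1}(k,\Lambda)$ is finite hence discrete; the associated long exact sequence then exhibits $H^{1}(k,H^{\lbe D})$ as a topological extension of a subgroup of the finite group $H^{1}(k,\Lambda)$ by a quotient of $H^{1}(k,F^{\lbe D})$ modulo a finite (because finitely generated of finite exponent) subgroup, and the three properties are extension properties (subsection~\ref{tops}, together with the observation that an extension of a group of exponent $m$ by one of exponent $n$ has exponent dividing $mn$). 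Next, the exact sequence $0\to U^{D}\to L^{\be D}\to H^{\lbe D}\to0$ in $\sks$ induced by \eqref{seq}, together with $H^{1}(k,U^{D})=0$, identifies $H^{1}(k,L^{\be D})$ with $\krn\be[\,\partial\colon H^{1}(k,H^{\lbe D})\to H^{2}(k,U^{D})\,]$; since $H^{2}(k,U^{D})$ carries the discrete topology by convention and $\partial$ is continuous (\cite{ros18}), this kernel is an open, hence closed, subgroup of $H^{1}(k,H^{\lbe D})$, and so inherits the three properties. This proves the last assertion of the lemma.

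For independence it suffices to compare two choices of \eqref{seq}. Let $\mathcal H$ be the set of $k$-subgroups $H\subseteq L$ that are extensions of a finite $k$-group by a $k$-torus with $L/H$ a $k$-split unipotent group; $\mathcal H\neq\varnothing$ by \cite[Lemma~2.1.7]{ros18}, and $\mathcal H$ is directed upward, since for $H_{1},H_{2}\in\mathcal H$ the subgroup $H_{1}+H_{2}$ is a quotient of $H_{1}\times H_{2}$ (quotients of extensions of a finite group by a torus are again of this form) and $L/(H_{1}+H_{2})$ is a quotient of the $k$-split unipotent group $L/H_{1}$ (hence $k$-split unipotent). So it is enough to treat nested members $H\subseteq H^{\le\prime}$ of $\mathcal H$. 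The image of the (unique) maximal torus $T$ of $L$ in $L/H$ is a torus contained in a $k$-split unipotent group, hence trivial, so $T\subseteq H$, and likewise $T\subseteq H^{\le\prime}$; thus $F\defeq H^{\le\prime}/H=(H^{\le\prime}/T)/(H/T)$ is finite. Since $(H^{\le\prime})^{D}$ and $H^{\lbe D}$ are the quotients of $L^{\be D}$ by $(L/H^{\le\prime})^{D}\subseteq(L/H)^{D}$, and $(L/H)^{D}/(L/H^{\le\prime})^{D}\cong F^{\lbe D}$ (using that $\extk^{1}$ of a $k$-split unipotent group into $\gmk$ vanishes, cf. \cite[Proposition~2.3.1]{ros18}), one obtains a short exact sequence $0\to F^{\lbe D}\to(H^{\le\prime})^{D}\to H^{\lbe D}\to0$ in $\sks$ with $F^{\lbe D}$ finite. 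The induced map $\phi\colon H^{1}(k,(H^{\le\prime})^{D})\to H^{1}(k,H^{\lbe D})$ has cokernel injecting into the finite discrete group $H^{2}(k,F^{\lbe D})$, so $\img\phi$ is open, hence closed, and $\phi$ is strict by Lemma~\ref{lem:stc}(ii). A commutative diagram of long exact sequences (induced by the morphism of the presenting short exact sequences that is the identity on $L^{\be D}$) shows that $\phi$ carries $H^{1}(k,L^{\be D})\subseteq H^{1}(k,(H^{\le\prime})^{D})$ bijectively onto $H^{1}(k,L^{\be D})\subseteq H^{1}(k,H^{\lbe D})$; both of these are open in their ambient groups by the previous paragraph and $\phi$ is open onto its open image, so this bijection is a homeomorphism, i.e. the subspace topologies on $H^{1}(k,L^{\be D})$ induced by $H$ and by $H^{\le\prime}$ coincide.

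The step I expect to be the main obstacle is the nested comparison: verifying that the two identifications of $H^{1}(k,L^{\be D})$ — inside $H^{1}(k,H^{\lbe D})$ and inside $H^{1}(k,(H^{\le\prime})^{D})$ — are compatible with $\phi$, and tracking continuity and strictness of connecting homomorphisms in the topologized cohomology of \cite{ros18}. The argument also relies on two standard facts that should be cited precisely — quotients of $k$-split unipotent groups are $k$-split unipotent, and $\extk^{1}(\gak,\gmk)=0$ as an fppf sheaf on $\spec k$ (equivalently, the exactness statement in \cite[Proposition~2.3.1]{ros18}) — and the only other non-formal inputs are the finiteness of the cohomology of finite flat group schemes over $p$-adic fields and the dévissage of $H^{\lbe D}$ into a Galois lattice extended by a finite group scheme.
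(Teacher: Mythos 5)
The paper's own proof of this lemma is essentially a citation (char.\ $0$: \cite[Thm.\ G.3]{ros18}; local function fields: \cite[Lemma 3.4.1 and \S 3.6 up to Lemma 3.6.1]{ros18}), so what has to be judged is whether your reconstruction of Rosengarten's argument is sound, and it is not at its key topological step. You claim that the image of $H^{1}(k,L^{D})$ in $H^{1}(k,H^{D})$ is \emph{open}, on the grounds that the connecting map $\partial\colon H^{1}(k,H^{D})\to H^{2}(k,U^{D})$ is continuous and the target is discrete ``by convention''. Continuity into a group that is merely \emph{declared} discrete is a substantive assertion, and it is false in general. Take $k=\mathbb{F}_{p}((t))$, $L=\G_{a}$ and the admissible presentation $0\to\alpha_{p}\to\G_{a}\xrightarrow{\;x\mapsto x^{p}\;}\G_{a}\to 0$ of type \eqref{seq} (so $H=\alpha_{p}$, $U=\G_{a}$). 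Then $H^{1}(k,L^{D})=H^{1}(k,\G_{a}^{D})=0$ by the very vanishing you invoke, while $H^{1}(k,H^{D})=H^{1}(k,\alpha_{p})\cong k/k^{p}$ carries its natural quotient topology from $k$ (the boundary map $\G_{a}(k)\to H^{1}(k,\alpha_{p})$ is open and surjective), which is locally compact and \emph{not} discrete since $k^{p}$ is closed but not open in $k$. Hence the subgroup $H^{1}(k,L^{D})$ is not open in $H^{1}(k,H^{D})$, and $\partial$, being injective here, cannot be continuous into a discrete group. (The lemma itself is unharmed in this example --- both presentations give the zero group --- but your intermediate claim is refuted.)

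This false step is load-bearing in two places. First, for the topological properties: Hausdorffness, total disconnectedness, second countability and finite exponent do pass to arbitrary subgroups, but \emph{local compactness} of $H^{1}(k,L^{D})$ requires its image to be \emph{closed} in $H^{1}(k,H^{D})$, and once openness is withdrawn your argument provides no closedness either; closedness has to be obtained by a different mechanism (e.g.\ identifying the image, via Rosengarten's pairing $H^{1}(k,H^{D})\times H^{1}(k,H)\to\Q/\Z$, with the annihilator of the image of $U(k)$ in $H^{1}(k,H)$, an intersection of kernels of continuous characters, which is closed --- this is the sort of thing \cite[\S 3.6]{ros18} actually does). Second, and more seriously, the concluding homeomorphism in your nested comparison uses precisely the openness of the two copies of $H^{1}(k,L^{D})$: strictness of $\phi\colon H^{1}(k,(H^{\prime})^{D})\to H^{1}(k,H^{D})$ alone does not make $\phi|_{X^{\prime}}\colon X^{\prime}\to X$ open, because for an open $V$ of the ambient group one only gets $\phi(V\cap X^{\prime})\subseteq\phi(V)\cap X$, and the inclusion is strict in general (note $\krn\phi\cap X^{\prime}=0$ while $\krn\phi\neq 0$, so one cannot translate back inside $V$). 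The surrounding skeleton --- directedness of the set of admissible $H$, reduction to nested $H\subseteq H^{\prime}$, the sequence $0\to F^{D}\to(H^{\prime})^{D}\to H^{D}\to 0$ with $F$ finite, and the compatibility of the two embeddings of $H^{1}(k,L^{D})$ --- is fine, but as written the proposal proves neither the independence of the topology nor local profiniteness; those points are exactly the non-formal content delegated by the paper to \cite[Lemma 3.4.1, \S 3.6]{ros18}.
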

\begin{proof} If $\car k=0$, the first assertion is trivially true. Further, $H^{\lle 1}\lbe(k, L^{\be D})$ is finite by \cite[Theorem G.3 and comments after the proof\e]{ros18}, whence the remaining assertions are also valid. If $k$ is a local function field, see \cite{ros18}, Lemma 3.4.1 and \S3.6 up to and including Lemma 3.6.1.
\end{proof}

The following statement summarizes the local duality theorems established in \cite{ros18}.

\begin{theorem} \label{ros18} Let $L\in\ak$.
\begin{enumerate}
\item[(i)] The cup product pairing $L^{\be D}\lbe(k)\times H^{\le 2}\lbe(k, L) \to
\Q\le/\le\Z\e$ induces a continuous perfect pairing of abelian topological groups
\[
L^{\be D}\be(k)^{\wedge}\times H^{\le 2}\lbe(k, L)\to\Q\le/\le\Z\e.
\]
\item[(ii)] The cup product pairing $L\lle(k)\times H^{\le 2}\lbe(k, L^{\be D})\to
\Q\le/\le\Z\e$ induces a continuous perfect pairing of abelian topological groups
\[
L\lle(k)^{\wedge}\times H^{\le 2}\lbe(k,  L^{\be D})\to\Q\le/\le\Z\e.
\]
\item[(iii)] The cup product pairing
\[
H^{\le 1}\lbe(k, L^{\be D})\times H^{\le 1}\lbe(k,  L)\to\Q\le/\le\Z\e
\]
is a continuous perfect pairing of second countable and locally profinite  abelian topological groups of finite exponent.
\end{enumerate}
\end{theorem}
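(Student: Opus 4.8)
The plan is to deduce all three assertions from the local duality theorems of Rosengarten \cite{ros18}: the perfection of the relevant cup-product pairings \emph{as pairings of abstract abelian groups} is proved there, so what remains is to match up the topologies and to verify that these abstract perfect pairings are continuous and perfect in the topological sense.

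For (i) and (ii) I would argue as follows. The perfection of $L^{\be D}\lbe(k)\times H^{\le 2}\lbe(k, L)\to\Q/\Z$ and of $L\lle(k)\times H^{\le 2}\lbe(k, L^{\be D})\to\Q/\Z$ as pairings of abstract groups --- the analogues for arbitrary $L\in\ak$ of Tate\,-Nakayama duality for $k$-tori --- is the substance of \cite{ros18} and is to be quoted from its main theorems. The topological ingredients are already available: $L^{\be D}\be(k)$ is finitely generated by \cite[Theorem 3.2.3]{ros18}, while $H^{\le 2}\lbe(k, L)$ is discrete and torsion by Proposition \ref{fc} and $H^{\le 2}\lbe(k, L^{\be D})$ is discrete by our conventions and torsion by \cite[Lemma 3.5.1]{ros18}. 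Thus in each case one variable becomes profinite after profinite completion and the other is already discrete and torsion, so by Pontryagin duality between profinite groups and discrete torsion groups it suffices to check continuity. For this I would use Proposition \ref{pt} in the form of Remark \ref{hcont}(b): the discrete variable pairs, through an open subgroup of finite index, trivially with the profinite variable --- this is where one uses that each character cut out by an element of the discrete torsion variable has torsion image and that the cup-product pairing is continuous in the completed variable, both of which are implicit in \cite{ros18} --- and the resulting continuous bijections onto Pontryagin duals are homeomorphisms by Lemma \ref{lem:stc}(i) (compact onto Hausdorff) and its discrete counterpart.

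For (iii): the perfection of $H^{\le 1}\lbe(k, L^{\be D})\times H^{\le 1}\lbe(k, L)\to\Q/\Z$ as a pairing of abstract groups is again one of the main theorems of \cite{ros18} (the analogue for arbitrary $L\in\ak$ of local Poitou\,-Tate duality for finite group schemes), while the assertions that both groups are second countable, locally profinite and of finite exponent are exactly Lemma \ref{rle} and Proposition \ref{fc} (the latter using that $L$ is affine). Continuity of the pairing I would get from Proposition \ref{pt}: since $\br k$ is torsion and both $H^{\le 1}$-groups are of finite exponent, the pairing factors through a product of quotients of the two factors by open subgroups, which yields conditions (i) and (ii) of that proposition, and condition (iii) --- continuity of $b\mapsto\langle -,b\rangle$ into the compact-open-topologized dual --- is part of the statement proved in \cite[\S3.6]{ros18}. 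Perfection as topological groups then follows from the open mapping theorem, exactly as in the proof of Lemma \ref{lem:stc}(ii): all the groups involved are locally compact and second countable, the Pontryagin dual of such a group is again of this type by \cite[Theorem 24.14]{hr}, and a continuous bijective homomorphism between locally compact second countable groups is open by \cite[Theorem 5.29]{hr}.

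I expect the main obstacle to be one of bookkeeping rather than of ideas: one must confirm that the topology imposed on $H^{\le 1}\lbe(k, L^{\be D})$ above --- defined via the auxiliary extension \eqref{seq} and shown independent of it in Lemma \ref{rle} --- is precisely the one with respect to which continuity is established in \cite[\S3.6]{ros18}, and one must carry out the continuity and open-mapping verifications with care. Genuine work here is confined to the local function field case; when $k$ is $p$-adic the groups in (iii) are finite and $L^{\be D}\be(k)$ is finitely generated, so essentially nothing needs to be checked.
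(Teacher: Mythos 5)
Your proposal is correct and takes essentially the same route as the paper: the paper's proof of this theorem is nothing more than the citation of Rosengarten's results (\cite[Theorems 3.2.3, 3.5.11 and 3.6.2]{ros18} for a local function field, and \cite[Theorem G.3]{ros18} for characteristic $0$, where the groups in (iii) are finite), which is exactly the reduction you make. The only difference is that those cited theorems already assert continuity and perfectness \emph{as topological groups} for precisely the topologies the paper adopts by definition (the \S3.6 topology on $H^{1}(k,L^{D})$, the discrete topology on the $H^{2}$'s), so your supplementary verifications (Pontryagin-duality and open-mapping arguments, and in particular the somewhat loose finite-exponent argument for continuity at $(0,0)$ in (iii)) are not needed.
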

\begin{proof} If $k$ is a local function field, see \cite[Theorems 3.2.3, 3.5.11 and 3.6.2]{ros18}. If $\car k=0$ (in which case both cohomology groups in (iii) are finite), see \cite[Theorem G.3 and comments after the proof\e]{ros18}\,.
\end{proof}

\begin{lemma}\label{dle} Let $M=[\le K^{\lbe D}\!\be\overset{\!\be u}{\to}\!  G\e\le]$ be the complex associated to a $k$-$1$-motive. Then $H^{\le r}\lbe(k,M\le)\simeq H^{\le r+1}\lbe(k,K^{\lbe D})$ for every $r\geq 3$ and there exists a canonical exact sequence of abelian groups
\[
\begin{array}{rcl}
0&\to&H^{-1}\lbe(k,M\le)\to K^{\be\lle D}\be\le(k)\to G(k)\overset{f^{\lle(0)}}{\to} H^{\le 0}\lbe(k,M\le)\overset{g^{\lle(0)}}{\to}  H^{\le 1}\lbe(k,K^{\lbe D}\le)\overset{u^{\lle(1)}}{\to}   H^{\le 1}\lbe(k,G\le)\\
&\overset{f^{\lle(1)}}{\to} & H^{\le 1}\lbe(k,M\le)\overset{g^{\lle(1)}}{\to}   H^{\le 2}\lbe(k,K^{\lbe D})\to  H^{\le 2}\lbe(k,G\le)\to  H^{\le 2}\lbe(k,M\le)\to 0.
\end{array}
\]
\end{lemma}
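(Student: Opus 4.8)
The plan is to derive both assertions from the long exact hypercohomology sequence of the canonical exact sequence of complexes \eqref{vid}.

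Recall that $M=C^{\le\bullet}(u)$ is the mapping cone of $u\colon K^{D}\to G$, so that by \eqref{cex} (with $A=K^{D}$, $B=G$) there is a short exact sequence $0\to G\to M\to K^{D}[1]\to 0$ in $\cbk$, i.e. \eqref{vid}; it is degreewise split, so it determines a distinguished triangle $G\to M\to K^{D}[1]\to G[1]$ in $\dbk$. Applying the derived-functor fppf hypercohomology functors $H^{r}(k,-)$ produces, for each $r\in\Z$, an exact sequence
\[
\cdots\to H^{r}(k,G)\xrightarrow{f^{(r)}} H^{r}(k,M)\xrightarrow{g^{(r)}} H^{r}\lbe(k,K^{D}[1])\xrightarrow{\delta^{(r)}} H^{r+1}(k,G)\to\cdots,
\]
where $f^{(r)}$ and $g^{(r)}$ are induced by the maps $G\to M$ and $M\to K^{D}[1]$ appearing in \eqref{vid}. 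Two routine identifications bring this into the form stated in the lemma. First, $H^{r}\lbe(k,K^{D}[1])=H^{r+1}(k,K^{D})$, since shifting a complex shifts its hypercohomology; thus the composite $H^{r}(k,M)\to H^{r}\lbe(k,K^{D}[1])=H^{r+1}(k,K^{D})$ is the map $g^{(r)}$ of the lemma. Second, under this identification $\delta^{(r)}\colon H^{r+1}(k,K^{D})\to H^{r+1}(k,G)$ becomes, up to a sign which is irrelevant for exactness, the map $u^{(r+1)}$ induced by $u$, because by the very construction of $C^{\le\bullet}(u)$ the connecting morphism $K^{D}[1]\to G[1]$ of the above triangle is $\pm\,u[1]$. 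Finally, on the left the sequence is truncated by $H^{-1}(k,G)=H^{-1}(k,K^{D})=0$ (the cohomology of a sheaf vanishes in negative degrees), yielding the beginning $0\to H^{-1}(k,M)\to H^{0}(k,K^{D})=K^{D}\lbe(k)\to H^{0}(k,G)=G(k)\to\cdots$ exactly as displayed.

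It then remains only to account for the behaviour in high degrees, and for this I would invoke the vanishing $H^{r}(k,G)=0$ and $H^{r}(k,K^{D})=0$ for all $r\ge 3$. The first follows by dévissage from the presentation of $G$ as an extension of an abelian variety by an affine $k$-group together with the cohomological-dimension properties of a non-archimedean local field (cf. Proposition \ref{fc}); the second is part of the cohomological picture of $K^{D}$ established in \cite{ros18}. Granted these, the portion of the long exact sequence with $H^{r}(k,G)=H^{r+1}(k,G)=0$, i.e. $r\ge 3$, shows that $g^{(r)}\colon H^{r}(k,M)\isoto H^{r+1}(k,K^{D})$, which is the first assertion of the lemma; and $H^{3}(k,K^{D})=0$ forces $\img g^{(2)}=0$, so that $f^{(2)}\colon H^{2}(k,G)\to H^{2}(k,M)$ is surjective, which is the final arrow $H^{2}(k,M)\to 0$ in the displayed sequence. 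The only steps needing any care are the sign/identification of the connecting map $\delta^{(r)}$ and the two vanishing inputs; otherwise the argument is just the long exact sequence of a mapping cone combined with dimension shifting, and I expect no genuine obstacle.
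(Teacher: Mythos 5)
Your argument is essentially the paper's: the paper likewise takes (hyper)cohomology of the sequence \eqref{vid} and concludes using the vanishing of $H^{\le 3}\lbe(k,K^{\lbe D})$ and of $H^{\le r}\lbe(k,G\le)$ for $r\geq 3$, which it simply cites from \cite[Propositions 3.1.2 and 3.1.3]{ros18} rather than re-deriving by d\'evissage (note that Proposition \ref{fc} alone does not give the vanishing you need, only discreteness and torsion in degrees $\geq 2$). Your fuller spelling-out of the shift identification and of the connecting map being $\pm\e u$ is correct and unobjectionable.
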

\begin{proof} By \cite[Propositions 3.1.2 and 3.1.3]{ros18}, $H^{\le 3}\lbe(k,K^{\lbe D})=0$ and $H^{\le r}\lbe(k,G\le)=0$ for all $r\geq 3$. The displayed sequence, as well as the first assertion of the lemma, now follow by taking (hyper)cohomology of the sequence \eqref{vid}.
\end{proof}

We now recall that the canonical map from \v{C}ech hypercohomology to derived functor hypercohomology (cf. \cite[tag 08BN]{sp}) induces, for every $k$-$1$-motive $M$ and every integer $r$, a map $\check{H}^{\lle r}\lbe(k,M\lle)\to H^{\lle r}\lbe(k,M\lle)$.

\begin{lemma} \label{lem:cech} If $r=0$ or $1$, the canonical map $\check{H}^{\le r}\lbe(k,M\lle)\to H^{\le r}\lbe(k,M\lle)$ is an isomorphism of abelian groups.
\end{lemma}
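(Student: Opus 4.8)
The plan is to reduce the assertion for the complex $M=[\le K^{\lbe D}\!\overset{\!u}{\to} G\e\le]$ to the corresponding (and classical) \v{C}ech\,-\,to\,-\,derived comparison for the individual sheaves $K^{\lbe D}$ and $G$, using the exact sequence \eqref{vid}. That sequence $0\to G\to M\to K^{\lbe D}[1]\to 0$ in $\cbk$ is split in each degree (it is $0\to 0\to K^{\lbe D}\to K^{\lbe D}\to 0$ in degree $-1$ and $0\to G\to G\to 0\to 0$ in degree $0$), so applying $\check C^{\le i}\lbe(\mathcal U,-)$ for an fppf covering $\mathcal U$ of $\spec k$, totalizing, and passing to the colimit over coverings produces a long exact sequence $\cdots\to \check H^{\le r}\lbe(k,K^{\lbe D})\to \check H^{\le r}\lbe(k,G)\to \check H^{\le r}\lbe(k,M)\to \check H^{\le r+1}\lbe(k,K^{\lbe D})\to\cdots$ which maps, via the canonical comparison maps $\check H^{\le\bullet}\to H^{\le\bullet}$, to the long exact sequence of Lemma \ref{dle}.

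The sheaf\,-\,level input needed is: for any abelian sheaf $F$ on $k_{\fl}$ the canonical map $\check H^{\le r}\lbe(k,F)\to H^{\le r}\lbe(k,F)$ is an isomorphism for $r\le 2$. For $r\le 1$ this holds on any site (for $r=0$ both sides are $F(k)$; for $r=1$ it is the usual identification of \v{C}ech and derived $H^{\le 1}$, see, e.g., \cite{miet}), and for $r=2$ one uses that the base is a field: in the \v{C}ech\,-\,to\,-\,derived spectral sequence $E_{2}^{\le p,q}=\check H^{\le p}\lbe(k,\underline H^{\le q}(F))\Rightarrow H^{\le p+q}\lbe(k,F)$, with $\underline H^{\le q}(F)\colon U\mapsto H^{\le q}(U,F)$, the presheaves $\underline H^{\le q}(F)$ for $q\ge 1$ have vanishing sheafification, and since $\check H^{\le 0}$ and $\check H^{\le 1}$ depend only on the associated sheaf, $E_{2}^{\le p,q}=0$ whenever $q\ge 1$ and $p\le 1$; this collapses the filtration on $H^{\le r}\lbe(k,F)$ onto $\check H^{\le r}\lbe(k,F)$ for $r\le 2$. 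Granting this, both cases of the lemma follow by chasing the above map of long exact sequences. For $r=0$ the four lemma, applied to the segment $\check H^{\le 0}\lbe(k,K^{\lbe D})\to \check H^{\le 0}\lbe(k,G)\to \check H^{\le 0}\lbe(k,M)\to \check H^{\le 1}\lbe(k,K^{\lbe D})$ (and one step further to each side), gives that $\check H^{\le 0}\lbe(k,M)\to H^{\le 0}\lbe(k,M)$ is an isomorphism, using only the degree\,-\,$\le 1$ comparisons. For $r=1$ one applies the monomorphism and epimorphism forms of the four lemma to the segments through $\check H^{\le 2}\lbe(k,K^{\lbe D})$ and through $\check H^{\le 2}\lbe(k,G)$ respectively, obtaining injectivity and surjectivity of $\check H^{\le 1}\lbe(k,M)\to H^{\le 1}\lbe(k,M)$.

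The step I expect to require the most care is the $r=1$ case, specifically its surjectivity: it genuinely needs the degree\,-\,$2$ comparison $\check H^{\le 2}\lbe(k,K^{\lbe D})\cong H^{\le 2}\lbe(k,K^{\lbe D})$, not merely the injection $\check H^{\le 2}\into H^{\le 2}$ that holds on an arbitrary site, because the kernel of $\check H^{\le 2}\lbe(k,K^{\lbe D})\to\check H^{\le 2}\lbe(k,G)$ must \emph{surject} onto the kernel of $H^{\le 2}\lbe(k,K^{\lbe D})\to H^{\le 2}\lbe(k,G)$. This is precisely the point at which the hypothesis that $k$ is a field intervenes, through the vanishing of the low\,-\,degree error terms in the \v{C}ech\,-\,to\,-\,derived\,-\,functor spectral sequence; establishing (or pinning down a citation for) that vanishing is the only non\,-\,formal ingredient.
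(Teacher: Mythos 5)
Your overall strategy---applying \v{C}ech hypercohomology to the degreewise split sequence \eqref{vid}, mapping the resulting long exact sequence to the derived-functor sequence of Lemma \ref{dle}, and concluding by the four/five lemma---is exactly the paper's proof. The only difference is the sheaf-level input: the paper simply quotes \cite[Propositions 2.9.6 and 2.9.9]{ros18} for the fact that the comparison maps for the particular sheaves $K^{D}$ and $G$ are isomorphisms in degrees $\le 2$, whereas you assert this for an \emph{arbitrary} abelian sheaf $F$ on $k_{\fl}$ and try to prove it with the \v{C}ech-to-derived spectral sequence.

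That is where there is a genuine gap. Your argument for surjectivity of $\check{H}^{2}(k,F)\to H^{2}(k,F)$ rests on the claim that $\check{H}^{1}$ of a presheaf depends only on its associated sheaf, so that $E_{2}^{1,1}=\check{H}^{1}(k,\underline{H}^{1}(F))=0$. The $\check{H}^{0}$ part of that claim is correct (a separated presheaf injects into its sheafification), but the $\check{H}^{1}$ part is not a standard fact and is false on a general site: $\check{H}^{1}(X,\mathcal{P})$, even after the colimit over coverings, need not vanish for a presheaf $\mathcal{P}$ with trivial sheafification. Indeed, if it did, $\check{H}^{2}\to H^{2}$ would be surjective for every abelian sheaf on every site, contradicting the classical counterexamples (Grothendieck's Tohoku example; the failure of \v{C}ech $=$ derived cohomology in degree $2$ outside paracompact or Artin-type hypotheses). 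The obstruction is concrete: a section of $\underline{H}^{1}(F)$ over $U_{i}\times_{k}U_{j}$ dies on \emph{some} fppf covering of $U_{i}\times_{k}U_{j}$, but that covering need not be induced by a refinement of the covering of $\spec k$, and nothing about fppf covers of a field repairs this formally. So the ingredient you flag as ``the only non-formal one'' really is non-formal, and your sketch does not establish it; what is needed---and what the paper uses---is the coefficient-specific comparison of \cite[\S 2.9]{ros18} for sheaves of the form $K^{D}$ (with $K$ affine algebraic) and for commutative algebraic groups $G$, not a statement about all abelian fppf sheaves. With that citation substituted for your spectral-sequence claim, your diagram chase (which, as you note, only needs the degree-$2$ isomorphism for $K^{D}$ together with injectivity for $G$, the degree-$\le 1$ comparisons being site-independent) goes through and coincides with the paper's proof.
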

\begin{proof} This follows by applying the five\e-\le lemma to the exact and commutative diagram of abelian groups
\[
\xymatrix{\check{H}^{\le r}\lbe(k,K^{\be\lle  D}\le)\ar[d]^{\simeq}\ar[r]& \check{H}^{\le r}\lbe(k,G\le)\ar[d]^{\simeq}\ar[r]^{\check{f}^{\lle(r)}}&
\check{H}^{\le r}\lbe(k,M\lle)\ar[r]^{\check{g}^{\lle(r)}}\ar[d]&\check{H}^{\le r+1}\lbe(k,K^{\be\lle D}\le)\ar[d]^{\simeq}\ar[r]& \check{H}^{r+1}(k,G\le)\ar[d]^{\simeq}\\
H^{\le r}\lbe(k,K^{\be\lle D}\le)\ar[r]& H^{\le r}\lbe(k,G\le)\ar[r]^{f^{\lle(r)}}&
H^{\le r}\lbe(k,M\lle)\ar[r]^{g^{\lle(r)}}&H^{\le r+1}\lbe(k,K^{\be\lle D}\le)\ar[r]& H^{\le r+1}\lbe(k,G\le),
}
\]
where the top row is obtained by taking \v{C}ech (hyper)cohomology of the exact sequence of complexes $0\to G\to M\to K^{\be\lle D}[1]\to 0$ \eqref{vid} and the bottom row is a part of the sequence of Lemma \eqref{dle}. The adorned vertical arrows are isomorphisms by \cite[Propositions 2.9.6 and 2.9.9]{ros18} since $r+1\leq 2$.
\end{proof}

The invariant map ${\rm inv}\colon \br\e k\isoto\Q/\Z$ of local class field theory \cite[XIII, \S 3, Proposition 6, p.~193]{ser2} and the pairings \eqref{bpair} induce pairings of abelian groups
\[
	\langle-,-\rangle_{ r}\e\colon H^{\lle r}\lbe(k,M\e)\times H^{\le 1-r}\lbe(k,M^{\lle \vee})\to \Q/\Z
\]
for every $r\in\Z$. In particular, let $G\in\ck$ be an extension $\mathcal E(L,\iota, G,\pi, A)$ and let $G^{\le\vee}=[\e L^{\! D}\!\be\overset{\!v}{\to}\! A^{t}\e]$. We obtain pairings of abelian groups
\begin{equation}\label{bpg}
	\langle-,-\rangle_{ r}\e\colon H^{\lle r}\lbe(k,G\e)\times H^{\le 1-r}\lbe(k,G^{\lle \vee})\to \Q/\Z.
\end{equation}

\begin{lemma}\label{kont} For every $\xi\in H^{\le 0}\lbe(k,G^{\le \vee})$, the homomorphism $\langle\e -, \xi\e\rangle_{\lbe 1}\colon H^{\le 1}\lbe(k,G\le)\to\Q/\Z$ is continuous at $0$.
\end{lemma}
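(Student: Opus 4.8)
The plan is to prove the continuity of $\langle -,\xi\rangle_{1}$ at $0$ by a d\'evissage along the defining extension $0\to L\overset{\iota}{\to} G\overset{\pi}{\to} A\to 0$, reducing the assertion to the continuity of Rosengarten's duality pairing for the affine group $L$ (Theorem \ref{ros18}(iii)). First I would note that, since $A$ is smooth, $H^{\le 1}\lbe(k,A\le)$ is discrete by Proposition \ref{fc}, so its subgroup $\img\pi^{\lle(1)}$ is discrete as well; by Corollary \ref{got} the maps $\iota^{\lle(1)}\colon H^{\le 1}\lbe(k,L\le)\to H^{\le 1}\lbe(k,G\le)$ and $\pi^{\lle(1)}\colon H^{\le 1}\lbe(k,G\le)\to H^{\le 1}\lbe(k,A\le)$ are strict. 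It follows that $\krn\pi^{\lle(1)}=\img\iota^{\lle(1)}$ is an \emph{open} subgroup of $H^{\le 1}\lbe(k,G\le)$ and that $\iota^{\lle(1)}$ is an open map onto it. Hence it suffices to show that the composite $\eta\mapsto\langle\iota^{\lle(1)}(\eta),\xi\rangle_{1}$ is continuous at $0$ on $H^{\le 1}\lbe(k,L\le)$: if $W$ is an open neighbourhood of $0$ in $H^{\le 1}\lbe(k,L\le)$ annihilated by this composite, then $\iota^{\lle(1)}(W)$ is open in $H^{\le 1}\lbe(k,G\le)$, contains $0$, and satisfies $\langle\iota^{\lle(1)}(W),\xi\rangle_{1}=\{0\}$.

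Next I would invoke the functoriality of the pairings \eqref{bpair} in $M$. The inclusion $\iota\colon L\to G$, regarded as a morphism $[\e 0\to L\e]\to[\e 0\to G\e]=G$ in the category of (complexes associated to) $k$-$1$-motives, has as its dual the canonical projection $q\colon G^{\le\vee}=[\e L^{\be D}\overset{v}{\to} A^{t}\e]\to L^{\le\vee}=L^{\be D}[1]$ of \eqref{cex}; on degree-$0$ hypercohomology $q$ induces the map $g^{\lle(0)}\colon H^{\le 0}\lbe(k,G^{\le\vee})\to H^{\le 1}\lbe(k,L^{\be D})$ of Lemma \ref{dle}. Functoriality therefore yields, for every $\eta\in H^{\le 1}\lbe(k,L\le)$,
\[
\langle\iota^{\lle(1)}(\eta),\xi\rangle_{1}=\langle\e\eta,\,g^{\lle(0)}(\xi)\e\rangle_{1},
\]
where the right-hand side is the pairing $\langle-,-\rangle_{1}\colon H^{\le 1}\lbe(k,L\le)\times H^{\le 0}\lbe(k,L^{\be D}[1])=H^{\le 1}\lbe(k,L\le)\times H^{\le 1}\lbe(k,L^{\be D})\to\Q/\Z$. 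Identifying this pairing (up to sign) with the cup-product pairing of Theorem \ref{ros18}(iii), which is continuous, we conclude that $\langle-,g^{\lle(0)}(\xi)\rangle_{1}$ is continuous, in particular at $0$; by the reduction of the first paragraph, the lemma follows.

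I expect the only non-formal input to be the two bookkeeping facts used in the second paragraph: that $\iota\colon L\to G$ is indeed a morphism of $k$-$1$-motives whose dual is the projection $q$ (a direct check against Definition \ref{1mot} and the construction of $M^{\le\vee}$), and that the restriction of the biextension pairings $\langle-,-\rangle_{r}$ to pure affine $k$-$1$-motives coincides with Rosengarten's cup-product pairings of Theorem \ref{ros18}. The latter is where the real content of the d\'evissage sits: it amounts to tracing the pullback-of-the-Poincar\'e-biextension construction through the sequence $0\to A^{t}\to M^{\le\vee}\to L^{\be D}[1]\to 0$ (the case $K=0$ of \eqref{vid}), the induced trivialization on the cross term being the evaluation pairing $L\otimes^{\mathbf{L}}\!L^{\be D}\to\G_{m,\le k}$. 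Once this identification is granted, the argument needs nothing beyond Corollary \ref{got}, Proposition \ref{fc}, and Theorem \ref{ros18}(iii).
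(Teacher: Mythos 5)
Your proposal is correct and follows essentially the same route as the paper: the paper's proof is exactly the reduction you describe, namely the commutativity (up to sign) of the square relating $\langle\,\iota^{(1)}(\eta),\xi\,\rangle_{1}$ to the Rosengarten pairing $[\,\eta,g^{(0)}(\xi)\,]_{1}$, the continuity of the latter from Theorem \ref{ros18}(iii), and the openness of $\iota^{(1)}$ from Corollary \ref{got}. The only divergence is bookkeeping: where you invoke functoriality of the pairings \eqref{bpair} together with the identification of the pairing for the pure affine $1$-motive $[\,0\to L\,]$ with Rosengarten's cup product (which you correctly flag, the cross-term trivialization being the evaluation pairing), the paper justifies the same compatibility square by citing Lemma \ref{lem:cech} and \cite[Lemma 07MC]{sp}.
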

\begin{proof} By Lemma \ref{lem:cech} and \cite[Lemma 07MC]{sp}, the following diagram commutes (up to sign):
	\begin{equation*}
		\begin{tikzcd}
			\left[-,-\right]_{\e 1}\colon  H^{\le 1}\lbe(k,L\le) \arrow[d,shift left=3.5ex,"i^{\lle(1)}"] \arrow[r, phantom, "\times"] &  H^{\le 1}\lbe(k,L^{\be D}) \arrow{r} & \Q/\Z \\
			\langle-,-\rangle_{\e 1}\e\colon H^{\le 1}\lbe(k,G\le) \arrow[r, phantom, "\times"]& H^{\le 0}\lbe(k,G^{\le \vee})  \arrow[u,"g^{\lle(0)}"'] \arrow{r} & \Q/\Z. \arrow[u, equals]
		\end{tikzcd}
	\end{equation*}
	Now, by Theorem \ref{ros18}(iii), the top pairing $\left[-,-\right]_{\e 1}$ is continuous, whence there exists an open neighborhood $U$ of $0_{H^{1}\lbe(k,L\le)}$ such that $\left[\e U,g^{\lle(0)}\lbe(\xi)\e\right]_{1}=\langle\le i^{(1)}\lbe(U), \xi\e\rangle_{1}=\{0\}$. Since $i^{(1)}$ is an open map by Corollary \ref{got}\,, the lemma follows.
\end{proof}

\smallskip

Next, since $H^{\le 2}\lbe(k,A^{t}\le)=0$ by \cite[proof of Theorem I.3.2 and Theorem III.7.8]{adt}, Lemma \ref{dle} applied to $M=G^{\le\vee}$ shows that $H^{\le 2}\lbe(k,G^{\le\vee})=0$ and yields an exact sequence of abelian groups 
\begin{equation}\label{bseq}
\begin{array}{rcl}
0&\to&H^{-1}\lbe(k,G^{\le\vee}\le)\to L^{\be D}\be\le(k)\overset{v^{\lle(0)}}{\to}A^{t}\lbe(k)\overset{f^{\lle(0)}}{\to} H^{\le 0}\lbe(k,G^{\le\vee}\le)\overset{g^{\lle(0)}}{\to} H^{\le 1}\lbe(k,L^{\be D}\le)\\
&\overset{v^{\lle(1)}}{\to}&  H^{\le 1}\lbe(k,A^{t}\le)\overset{f^{\le(1)}}{\to} H^{\le 1}\lbe(k,G^{\le\vee}\le)\overset{g^{\lle(1)}}{\to}  H^{\le 2}\lbe(k,L^{\be D})\to 0.
\end{array}
\end{equation}
The group $\coim f^{(0)}=A^{t}\lbe(k)/\e\krn f^{\lle(0)}=A^{t}\lbe(k)/\e\img v^{\lle(0)}$ (respectively, $\img g^{\lle(0)}\subset H^{1}(k,L^{\be D}\le)$) will be equipped with the quotient (respectively, subspace) topology of the topology of $A^{t}\lbe(k)$ (respectively, $H^{1}(k,L^{\be D}\le)$).

\begin{proposition}\label{ko} The map $v^{(1)}\colon H^{\lle 1}\lbe(k,L^{\be D})\to H^{\lle 1}\lbe(k,A^{t}\lle)$ \eqref{bseq} is continuous and strict.
\end{proposition}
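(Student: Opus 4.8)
The plan is to reduce the statement to the openness of $\krn v^{(1)}$ in $H^{\le 1}\lbe(k,L^{\be D})$. Since $A^{t}$ is an abelian variety, hence smooth, $H^{\le 1}\lbe(k,A^{t}\le)$ is discrete by Proposition \ref{fc}. For a homomorphism into a discrete group, openness of the kernel implies continuity, since the preimage of any subset of the target is then a union of open cosets of $\krn v^{(1)}$; and, granted continuity, the quotient $H^{\le 1}\lbe(k,L^{\be D})/\krn v^{(1)}$ is discrete (the kernel being open) and maps bijectively and continuously onto the discrete subgroup $\img v^{(1)}\subseteq H^{\le 1}\lbe(k,A^{t}\le)$, hence homeomorphically, so $v^{(1)}$ is strict. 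Thus it suffices to show that $\krn v^{(1)}$ is open.

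To this end I would identify $v^{(1)}$ with a Pontryagin dual. Applying \eqref{vid} to $M=G^{\le\vee}=[\,L^{\be D}\!\overset{v}{\to}A^{t}\,]$ gives an exact sequence $0\to A^{t}\to G^{\le\vee}\to L^{\be D}[1]\to 0$ in $\cbk$, which is the image of $0\to L\overset{\iota}{\to}G\overset{\pi}{\to}A\to 0$ under the (contravariant) duality of $k$-$1$-motives; its hypercohomology connecting map $H^{\le 1}\lbe(k,L^{\be D})\to H^{\le 1}\lbe(k,A^{t}\le)$ is $v^{(1)}$, whereas the connecting map of $0\to L\to G\to A\to 0$ is the map $\partial\colon A(k)\to H^{\le 1}\lbe(k,L\le)$ of Corollary \ref{got}. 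Since the cup-product pairings \eqref{bpair} are functorial in the $1$-motive, these two connecting maps are adjoint: for $x\in A(k)$ and $y\in H^{\le 1}\lbe(k,L^{\be D})$ one has $\langle\, y,\partial x\,\rangle_{L}=\pm\,\langle\, x,v^{(1)}y\,\rangle_{A}$, where $\langle-,-\rangle_{L}\colon H^{\le 1}\lbe(k,L^{\be D})\times H^{\le 1}\lbe(k,L\le)\to\Q/\Z$ is the perfect pairing of Theorem \ref{ros18}(iii) and $\langle-,-\rangle_{A}\colon A(k)\times H^{\le 1}\lbe(k,A^{t}\le)\to\Q/\Z$ is the Milne\,-Tate pairing \cite[Corollary I.3.4 and Theorem III.7.8]{adt}, which is perfect with $A(k)$ compact (Proposition \ref{topg}) and $H^{\le 1}\lbe(k,A^{t}\le)$ discrete. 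Writing $\Theta_{L}\colon H^{\le 1}\lbe(k,L^{\be D})\isoto H^{\le 1}\lbe(k,L\le)^{*}$ and $\Theta_{A}\colon H^{\le 1}\lbe(k,A^{t}\le)\isoto A(k)^{*}$ for the induced topological isomorphisms, the preceding identity reads $\Theta_{A}\circ v^{(1)}=\pm\,\partial^{*}\circ\Theta_{L}$, where $\partial^{*}\colon H^{\le 1}\lbe(k,L\le)^{*}\to A(k)^{*}$ is the Pontryagin dual of $\partial$. In particular $\krn v^{(1)}=\Theta_{L}^{-1}\lbe(\krn\partial^{*})$.

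It remains to check that $\krn\partial^{*}$ is open. As $A$ is proper over $k$, $A(k)$ is compact (Proposition \ref{topg}), and $\partial$ is continuous (Corollary \ref{got}), so $\img\partial$ is a compact subgroup of $H^{\le 1}\lbe(k,L\le)$. Now $\krn\partial^{*}=\{\psi\in H^{\le 1}\lbe(k,L\le)^{*}:\psi(\img\partial)=\{0\}\}$; picking a neighbourhood $U$ of $0$ in $\T$ containing no non-trivial subgroup, this set coincides with $\{\psi:\psi(\img\partial)\subseteq U\}$ (because $\psi(\img\partial)$ is a subgroup of $\T$), which is open for the compact-open topology on $H^{\le 1}\lbe(k,L\le)^{*}$ precisely since $\img\partial$ is compact. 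Hence $\krn\partial^{*}$, and therefore $\krn v^{(1)}$, is open, which finishes the proof.

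I expect the main obstacle to be the adjunction identity in the second paragraph. One must verify that the biextension-defined pairings \eqref{bpair} for the $1$-motives $L$, $A$ and $G$ agree, respectively, with Rosengarten's cup-product pairing of Theorem \ref{ros18}(iii), the Milne\,-Tate pairing, and the pairing \eqref{bpg}, and that these are compatible with the connecting homomorphisms of the mutually dual exact sequences $0\to L\to G\to A\to 0$ and $0\to A^{t}\to G^{\le\vee}\to L^{\be D}[1]\to 0$. This is formal, following from the functoriality of the Poincar\'e biextension construction together with the standard compatibility of cup products with mapping cones, but it takes some care to set up cleanly; everything after the identification $v^{(1)}=\pm\,\Theta_{A}^{-1}\circ\partial^{*}\circ\Theta_{L}$ is soft topology.
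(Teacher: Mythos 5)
Your argument is correct in outline but takes a genuinely different route from the paper. The paper proves continuity directly (strictness being automatic because $H^{\le 1}\lbe(k,A^{t})$ is discrete): in characteristic $0$ the source $H^{\le 1}\lbe(k,L^{\be D})$ is finite and discrete, and over a local function field it applies $\extk^{1}\lbe(-,\G_{m,\le k})$ to diagram \eqref{epi} to factor $v^{(1)}$ as $H^{\le 1}\lbe(k,L^{\be D})\to H^{\le 1}\lbe(k,R^{D})\to H^{\le 1}\lbe(k,A^{t})$, where $R=G_{\sab}\times_{G}L$ is an extension of a finite $k$-group by a torus, so that $R^{D}$ is representable; the first map is continuous by \cite[Proposition 3.6.3]{ros18} and the second by \cite[Proposition 4.2]{ces}. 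You instead identify $v^{(1)}$, via Rosengarten duality for $L$ and Milne\,-Tate duality for $A$, with (up to sign) the Pontryagin dual of the connecting map $\partial\colon A(k)\to H^{\le 1}\lbe(k,L\le)$, and deduce openness of $\krn v^{(1)}$ from compactness of $A(k)$; the purely topological part of this is sound (the annihilator of the compact subgroup $\img\partial$ is open in the compact-open topology, and openness of the kernel yields continuity and strictness since the target is discrete). What your route buys is extra information: openness of $\krn v^{(1)}=\img g^{(0)}$, and in effect the commutativity of the right-hand square of diagram \eqref{hell}, which the paper needs later anyway. What it costs is that all the weight now rests on the adjunction $\langle\, y,\partial x\,\rangle_{L}=\pm\,\langle\, x,v^{(1)}y\,\rangle_{A}$, i.e., on comparing the biextension pairings \eqref{bpair} with Rosengarten's and Milne\,-Tate's pairings and checking their compatibility with the connecting maps of the two mutually dual sequences; you flag this step but do not carry it out. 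It is the same compatibility the paper itself invokes (with light justification) for \eqref{hell}, and it does not depend on Proposition \ref{ko}, so there is no circularity; still, as written it is the one substantive step of your proof that remains to be supplied, whereas the paper's factorization avoids it entirely, using only continuity statements from \cite{ros18} and \cite{ces}.
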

\begin{proof} Since $H^{\lle 1}\lbe(k,A^{t}\lle)$ is discrete by Proposition \ref{topg}\,, only the continuity of $v^{(1)}$ has to be checked. If $\car k=0$, then $H^{\lle 1}\lbe(k,L^{\be D})$ is finite and discrete \cite[Theorem G.3\e]{ros18} and $v^{(1)}$ is clearly continuous. Assume now that $k$ is a local function field.  Since $\extk^{1}\lbe(H,\G_{m,\le k})=0$ for every $H\in\ak$ by \cite[Proposition 2.2.17]{ros18}, an application of the (contravariant) functor $\extk^{1}\lbe(-,\G_{m,\le k})$ to diagram \eqref{epi} using the generalized Barsotti-Weil formula yields a canonical exact and commutative diagram in $k_{\le\fl}^{\sim}$
\[
\xymatrix{&	Q^{ D}\ar@{^{(}->}[d]\ar@{=}[r]&Q^{D}\ar@{^{(}->}[d]&&&\\
0\ar[r]&	G^{D}\ar@{->>}[d]\ar[r]&L^{\be D}\ar@{->>}[d]^(.4){\jmath^{\lle D}}\ar[r]^{v}&A^{t}\ar@{=}[d]\ar[r]&
\extk^{1}\be(G,\G_{m,\le k})\ar@{^{(}->}[d]\ar[r]&0\\
0\ar[r]&G_{\sab}^{D}\ar[r]&R^{D}\ar[r]^(.45){z}&A^{t}\ar[r]&\extk^{1}\be(G_{\sab},\G_{m,\le k})\ar[r]&0.
}
\]
Consequently, $v^{(1)}$ factors as
$H^{\lle 1}\lbe(k,L^{\be D})\overset{\be (\le\jmath^{\lle D}\lbe)^{\lle(1)}}{\to} H^{\lle 1}\lbe(k,R^{ D})\overset{\be z^{\lle(1)}}{\to}  H^{\lle 1}\lbe(k,A^{t}\le)$. The first map is continuous by \cite[Propositiom 3.6.3]{ros18}. On the other hand, the top row of diagram \eqref{td0} shows that $R$ is an extension of a finite $k$-group by a $k$-torus, whence $R^{D}$ is representable by \cite[Proposition 2.3.5]{ros18}. Thus the map $z^{\lle(1)}$ above is also continuous by \cite[Proposition 4.2]{ces}, whence the lemma follows.
\end{proof}

\begin{corollary}\label{topr}\indent
\begin{enumerate} 
\item[(i)] $\coim f^{\lle(0)}$ is a compact and second countable abelian topological group.	
\item[(ii)] $\img g^{(0)}$ is a locally profinite  and second countable abelian topological group.
\end{enumerate}
\end{corollary}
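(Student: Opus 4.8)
The plan is to deduce both parts from the exact sequence \eqref{bseq} together with the topological information already collected about $A^{t}\lbe(k)$ and $H^{\le 1}\lbe(k,L^{\be D})$, using only standard permanence properties of topological groups.

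For part (i), first I would recall from Proposition \ref{topg} that, since $A^{t}$ is an abelian $k$-variety — hence proper over $k$ — and $k$ is non-archimedean, $A^{t}\lbe(k)$ is profinite and second countable; in particular it is compact and second countable. By the exactness of \eqref{bseq} one has $\coim f^{\lle(0)}=A^{t}\lbe(k)/\,\img v^{\lle(0)}$ equipped with the quotient topology, so the canonical projection $p\colon A^{t}\lbe(k)\onto\coim f^{\lle(0)}$ is a continuous, surjective and open homomorphism of abelian topological groups (the projection onto the quotient of a topological group by a subgroup is always open). Then $\coim f^{\lle(0)}=p\lbe(A^{t}\lbe(k))$ is compact, being a continuous image of a compact space, and it is second countable, since the image under the open map $p$ of a countable basis of $A^{t}\lbe(k)$ is a countable basis of $\coim f^{\lle(0)}$. (Note that $\img v^{\lle(0)}$ need not be closed in $A^{t}\lbe(k)$, so $\coim f^{\lle(0)}$ is not Hausdorff in general; its maximal Hausdorff quotient is $A^{t}\lbe(k)/\,\overline{\img v^{\lle(0)}}$, which will reappear in Theorem \ref{thm:main}.)

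For part (ii), by the exactness of \eqref{bseq} we have $\img g^{\lle(0)}=\krn\be\bigl[\e v^{\lle(1)}\colon H^{\le 1}\lbe(k,L^{\be D})\to H^{\le 1}\lbe(k,A^{t}\le)\bigr]$ as a subgroup of $H^{\le 1}\lbe(k,L^{\be D})$, carrying the subspace topology. First I would note that $v^{\lle(1)}$ is continuous by Proposition \ref{ko} and that $H^{\le 1}\lbe(k,A^{t}\le)$ is discrete — hence Hausdorff — by Proposition \ref{fc}, since $A^{t}$ is smooth; consequently $\img g^{\lle(0)}=\krn v^{\lle(1)}$ is a \emph{closed} subgroup of $H^{\le 1}\lbe(k,L^{\be D})$. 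Next I would invoke Lemma \ref{rle}: the group $H^{\le 1}\lbe(k,L^{\be D})$ is locally profinite and second countable. Finally, a closed subgroup of a locally profinite (respectively, second countable) abelian topological group is again locally profinite (respectively, second countable), because the Hausdorff property, total disconnectedness and second countability pass to arbitrary subspaces while local compactness passes to closed subgroups (subsection \ref{tops}). This yields (ii).

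I do not expect a genuine obstacle: the argument is a short diagram chase through \eqref{bseq} combined with permanence properties. The one point that must be handled with care is part (ii), where one needs $\img g^{\lle(0)}$ to be a \emph{closed} subgroup in order to preserve local compactness; this is exactly where the continuity half of Proposition \ref{ko}, applied together with the discreteness of $H^{\le 1}\lbe(k,A^{t}\le)$, is used — the strictness assertion of Proposition \ref{ko} plays no role in this corollary.
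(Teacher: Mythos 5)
Your proposal is correct and follows essentially the same route as the paper: part (i) via Proposition \ref{topg} and the openness of the quotient projection, and part (ii) by observing that $\img g^{\lle(0)}=\krn v^{\lle(1)}$ is closed in $H^{\le 1}\lbe(k,L^{\be D})$ (continuity from Proposition \ref{ko} plus Hausdorffness of $H^{\le 1}\lbe(k,A^{t}\le)$) and then applying Lemma \ref{rle} together with the permanence properties of locally profinite and second countable groups. Your added remarks — that only the continuity half of Proposition \ref{ko} is used, and that $\coim f^{\lle(0)}$ need not be Hausdorff — are accurate and consistent with the paper.
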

\begin{proof} Assertion (i) follows from Proposition \ref{topg} and \cite[Lemma 6.2(a) and Theorem 6.7(b)]{str}\,. Now, since \eqref{bseq} is exact, $v^{(1)}\colon H^{\le 1}\lbe(k,L^{\be D}\le)\to H^{\le 1}\lbe(k,A^{t}\le)$ is continuous by Lemma \ref{ko} and $H^{\le 1}\lbe(k,A^{t})$ is Hausdorff, $\img g^{(0)}=\krn v^{(1)}$ is a closed subgroup of $H^{\le 1}\lbe(k,L^{\be D}\le)$, whence (ii) is immediate from Lemma \ref{rle}\,.
\end{proof}

By Lemma \ref{kont}\,, the map
\begin{equation}\label{gmap}
\gamma\colon H^{\le 0}\lbe(k,G^{\le \vee})\to H^{\le 1}\lbe(k,G\le)^{*}, \xi\to\langle\e -, \xi\e\rangle_{\lbe 1},
\end{equation}
is well-defined. Now, by Lemma \ref{got} and the exactness of \eqref{bseq}, there exists an exact and commutative diagram of abelian groups
\begin{equation}\label{hell}
\xymatrix{A^{t}\lbe(k)\ar[r]^(.4){f^{(0)}}\ar[d]^{\simeq}&
H^{\le 0}\lbe(k,G^{\le \vee})\ar[r]^(.53){g^{(0)}}\ar@{->>}[d]^(.45){\gamma}&H^{\le 1}\lbe(k,L^{\be D})\ar[d]^{\simeq}\ar[r]^{v^{\lle(1)}}& H^{\le 1}\lbe(k,A^{t})\ar[d]^{\simeq}\\
H^{\e 1}(k,A\le)^{*}\ar[r]^{(\pi^{(\lbe 1\lbe)}\lbe)^{*}}&
H^{\le 1}\lbe(k,G\le)^{*}\ar[r]^{(i^{(\lbe 1\lbe)}\lbe)^{*}}&H^{\le 1}\lbe(k,L\le)^{*}\ar[r]& A(k)^{*},
}
\end{equation}
where the first, third and fourth vertical arrows are topological isomorphisms, the bottom row is strict exact and the map  $\gamma$ \eqref{gmap} is surjective by, e.g., \cite[Lemma 3.3(ii)]{mac}. The preceding diagram induces the following exact and commutative diagram of 
abelian groups, whose bottom row is the topological extension \eqref{dos}\e:
\begin{equation}\label{hell2}
\xymatrix{\mathcal S\colon 0\ar[r]&\coim f^{(0)}\ar[r]^(.48){\widetilde{f^{(0)}}}\ar@{->>}[d]^(.45){\alpha}&
H^{\le 0}\lbe(k,G^{\le \vee})\ar[r]^(.54){\widetilde{g^{(0)}}}\ar@{->>}[d]^(.45){\gamma}&\img g^{(0)}\ar[d]^(.45){\beta}_(.45){\simeq}\ar[r]& 0\\
\mathcal S^{\le\prime}\colon 0\ar[r]&\coim (\pi^{\lle (\lbe 1\lbe)})^{*}\ar[r]^(.52){\widetilde{(\lbe\pi^{(\lbe 1\lbe)}\lbe)^{\lbe*}}}&
H^{\le 1}\lbe(k,G\le)^{*}\ar[r]^(.52){\widetilde{(\lbe\iota^{(\lbe 1\lbe)}\lbe)^{\lbe*}}}&\img (i^{\lle (\lbe 1\lbe)})^{*}\ar[r]&0.
}
\end{equation}
The first and third vertical maps above are induced by the corresponding maps in diagram \eqref{hell}.

\begin{lemma}\label{kal} The map $\alpha\colon \coim f^{(0)}\to \coim (\pi^{\lle (\lbe 1\lbe)})^{*}$ in diagram \eqref{hell2} is continuous and strict.
\end{lemma}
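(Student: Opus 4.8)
The plan is to deduce both properties from the fact, already recorded in \eqref{hell}, that the leftmost vertical arrow $\mu\colon A^{t}(k)\isoto H^{\le 1}\lbe(k,A\le)^{*}$ is a \emph{topological} isomorphism (Milne-Tate duality; cf.\ Remark \ref{ltd}), together with the observation that $\coim f^{(0)}$ and $\coim(\pi^{(1)})^{*}$ carry, by construction, the quotient topologies induced from $A^{t}(k)$ and $H^{\le 1}\lbe(k,A\le)^{*}$ respectively. First I would record why $\alpha$ is defined at all: commutativity of the left-hand square of \eqref{hell} gives $\gamma\circ f^{(0)}=(\pi^{(1)})^{*}\circ\mu$, so for $a\in\krn f^{(0)}$ one has $(\pi^{(1)})^{*}(\mu(a))=\gamma(f^{(0)}(a))=0$; hence $\mu\bigl(\krn f^{(0)}\bigr)\subseteq\krn(\pi^{(1)})^{*}$, and $\mu$ descends to $\alpha$. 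Continuity of $\alpha$ is then immediate from the universal property of the quotient topology: writing $p_{2}\colon H^{\le 1}\lbe(k,A\le)^{*}\onto\coim(\pi^{(1)})^{*}$ for the canonical projection, the continuous homomorphism $p_{2}\circ\mu\colon A^{t}(k)\to\coim(\pi^{(1)})^{*}$ vanishes on $\krn f^{(0)}$ and hence factors through a continuous homomorphism out of $\coim f^{(0)}$, which is precisely $\alpha$.

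For strictness I would apply Lemma \ref{prev} to the commutative square whose horizontal arrows are the canonical projections $p_{1}\colon A^{t}(k)\onto\coim f^{(0)}$ and $p_{2}$, and whose vertical arrows are $\mu$ and $\alpha$. Here $p_{1}$ is surjective, $p_{2}$ is open (being a quotient map of topological groups, \cite[Lemma 6.2(a)]{str}), and $\mu$ is strict (being a topological isomorphism); Lemma \ref{prev} then gives that $\alpha$ is strict. Equivalently, one may factor $\alpha$ as the open projection $\coim f^{(0)}=A^{t}(k)/\krn f^{(0)}\onto A^{t}(k)/\mu^{-1}\bigl(\krn(\pi^{(1)})^{*}\bigr)$ followed by the topological isomorphism $A^{t}(k)/\mu^{-1}\bigl(\krn(\pi^{(1)})^{*}\bigr)\isoto\coim(\pi^{(1)})^{*}$ induced by $\mu$, exhibiting $\alpha$ as a continuous open surjection.

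I do not expect any genuine obstacle: the only point needing care is bookkeeping of the topologies --- in particular that both coimages are topologized as quotients, exactly as arranged just before Proposition \ref{ko} --- so that the universal property of the quotient topology and the hypotheses of Lemma \ref{prev} apply verbatim. I would also note in passing that $\coim f^{(0)}$ need not be Hausdorff, since $\krn f^{(0)}=\img v^{(0)}$ may fail to be closed in the compact group $A^{t}(k)$; but this causes no trouble, as neither argument above uses Hausdorffness.
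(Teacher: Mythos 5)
Your proposal is correct and follows essentially the same route as the paper: the paper likewise obtains strictness by applying Lemma \ref{prev} to the square formed by the two quotient projections, the Milne\,-Tate topological isomorphism $A^{t}\lbe(k)\isoto H^{\le 1}\lbe(k,A\le)^{*}$ and $\alpha$, with the bottom projection open by \cite[Lemma 6.2(a)]{str}. Your explicit verification of continuity via the universal property of the quotient topology is a harmless elaboration of what the paper leaves implicit.
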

\begin{proof} This follows by applying Lemma \ref{prev} to the commutative diagram of abelian topological groups
\[
\xymatrix{A^{t}\lbe(k)\ar@{->>}[r]\ar[d]^(.45){\simeq}&\coim f^{(0)}\ar@{->>}[d]^(.45){\alpha}\\
H^{\e 1}(k,A\le)^{*}\ar@{->>}[r]&\coim (\pi^{\lle (\lbe 1\lbe)})^{*},
}
\]
whose bottom horizontal map is open by \cite[Lemma 6.2(a)]{str}.
\end{proof}

\begin{definition}\label{dnt} For every set\le-\le theoretic section $\sigma$ of the map $\widetilde{g^{(0)}}$ in diagram \eqref{hell2} which satisfies condition (N1), let $H^{\le 0}_{\lbe\sigma}\lbe(k,G^{\le \vee})$ denote the abelian group $H^{\le 0}\lbe(k,G^{\le \vee})$ equipped with the Nagao topology determined by $(\mathcal S,\sigma)$, where $\mathcal S$ is the top row of diagram \eqref{hell2}.
\end{definition}

By Corollary \ref{topr}\,, $H^{\le 0}_{\lbe\sigma}\lbe(k,G^{\le \vee})$ is a locally compact and second countable abelian topological group.

\begin{proposition}\label{kon} The map $\gamma_{\lle\sigma}\colon H^{\le 0}_{\lbe\sigma}\lbe(k,G^{\le \vee})\to H^{\le 1}\lbe(k,G\le)^{*},\le \xi\to\langle\e -, \xi\e\rangle_{\lbe 1},$ is continuous and strict.
\end{proposition}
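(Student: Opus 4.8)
The plan is to compare the two Nagao-topologized extensions $\mathcal S$ and $\mathcal S^{\le\prime}$ appearing in diagram \eqref{hell2} and to conclude by the functoriality of the Nagao topology recorded in Lemma \ref{nag}. First I would observe that, by Remark \ref{lnd}, the compact-open topology on $H^{\le 1}\lbe(k,G\le)^{*}$ coincides with the Nagao topology determined by $(|\mathcal S^{\le\prime}|,\tau)$ for \emph{any} set-theoretic section $\tau$ of $\widetilde{(\lbe\iota^{(\lbe 1\lbe)}\lbe)^{\lbe*}}$ satisfying condition (N1); here we use that $\img(i^{\lle(1)})^{*}$ is discrete, which holds because $H^{\le 1}\lbe(k,L\le)^{*}$ is discrete (as $H^{\le 1}\lbe(k,L\le)$ is profinite by Theorem \ref{ros18}(iii) and Pontryagin duality). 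So the topology on the target is insensitive to the choice of section.

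The crux is then to produce a section $\tau$ of $\widetilde{(\lbe\iota^{(\lbe 1\lbe)}\lbe)^{\lbe*}}$ that is \emph{compatible} with the chosen section $\sigma$ of $\widetilde{g^{(0)}}$ in the sense required by Lemma \ref{nag}, namely $\gamma\circ\sigma=\tau\circ\beta$, where $\beta\colon\img g^{(0)}\isoto\img(i^{\lle(1)})^{*}$ is the (topological) isomorphism in \eqref{hell2}. Since $\beta$ is a bijection, one can simply \emph{define} $\tau\defeq\gamma\circ\sigma\circ\beta^{-1}$; this is a set-theoretic section of $\widetilde{(\lbe\iota^{(\lbe 1\lbe)}\lbe)^{\lbe*}}$ because $\widetilde{(\lbe\iota^{(\lbe 1\lbe)}\lbe)^{\lbe*}}\circ\gamma=\beta\circ\widetilde{g^{(0)}}$ on $H^{\le 0}\lbe(k,G^{\le\vee})$ by the commutativity of \eqref{hell2}. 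One must check that $\tau$ can be arranged to satisfy (N1): since $\sigma$ satisfies (N1), $\beta$ is an additive isomorphism and $\gamma$ is a homomorphism, $\tau$ already satisfies $\tau(0)=0$ and $\tau(-b)=-\tau(b)$. Equip $H^{\le 1}\lbe(k,G\le)^{*}$ with the Nagao topology determined by $(|\mathcal S^{\le\prime}|,\tau)$, which by the previous paragraph is its given compact-open topology.

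Now apply Lemma \ref{nag} to the morphism of extensions $(\alpha,\gamma,\beta)\colon|\mathcal S|\to|\mathcal S^{\le\prime}|$ with the sections $\sigma$ and $\tau$: the hypothesis $\gamma\circ\sigma=\tau\circ\beta$ holds by construction, so $\gamma_{\lle\sigma}$ is continuous (respectively, continuous and strict) if and only if $\alpha$ and $\beta$ are. But $\alpha$ is continuous and strict by Lemma \ref{kal}, and $\beta$ is a topological isomorphism (hence continuous and strict), so $\gamma_{\lle\sigma}$ is continuous and strict, as claimed.

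The main obstacle I anticipate is purely bookkeeping rather than conceptual: one must verify carefully that the section $\tau$ obtained by transport along $\beta$ can be taken to satisfy condition (N1) exactly as stated (which is immediate here since all the transported data are additive and $\beta$ is a group isomorphism), and that Remark \ref{lnd} genuinely applies, i.e.\ that the discreteness of $\img(i^{\lle(1)})^{*}$ is in force — both of which are routine given the results already established. No new input beyond Lemmas \ref{nag}, \ref{kal}, Remark \ref{lnd}, Corollary \ref{topr} and diagram \eqref{hell2} is needed.
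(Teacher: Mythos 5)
Your proposal follows the paper's own proof essentially verbatim: you set $\tau=\gamma\circ\sigma\circ\beta^{-1}$, check (N1), identify the compact-open topology of $H^{\le 1}(k,G\le)^{*}$ with the Nagao topology determined by $(|\mathcal S^{\le\prime}|,\tau)$ via Remark \ref{lnd}, and conclude by Lemma \ref{nag} using Lemma \ref{kal} for $\alpha$ and the fact that $\beta$ is a topological isomorphism.

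One side remark in your write-up is incorrect, though it does not affect the argument: Theorem \ref{ros18}(iii) does \emph{not} say that $H^{\le 1}(k,L\le)$ is profinite, only that it is locally profinite, second countable and of finite exponent; over a local function field (e.g.\ for non-smooth unipotent $L$) it is typically non-compact, so $H^{\le 1}(k,L\le)^{*}$, and hence $\img\,(\iota^{(1)})^{*}$, need not be discrete. No such discreteness is needed: Remark \ref{lnd} derives the independence of the Nagao topology on $H^{\le 1}(k,G\le)^{*}$ from the choice of section by using the discreteness of $\img\,\pi^{(1)}\subset H^{\le 1}(k,A\le)$ on the primal side (Remark \ref{ndisc}) together with Pontryagin duality and Theorem \ref{nag2}, so you should cite it as stated rather than re-justify it through a discreteness claim on the dual side.
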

\begin{proof} The diagram \eqref{hell2} induces the following commutative diagram of abelian groups whose rows are topological extensions\e:
\[
\xymatrix{\mathcal S\colon 0\ar[r]&\coim f^{(0)}\ar[r]^(.48){\widetilde{f^{(0)}}}\ar@{->>}[d]^(.45){\alpha}&
H^{\le 0}_{\sigma}\lbe(k,G^{\le \vee})\ar[r]^(.54){\widetilde{g^{(0)}}}\ar@{->>}[d]^(.45){\gamma_{\sigma}}&\ar@/^1pc/[l]_(.47){\sigma}\img g^{(0)}\ar[d]^(.45){\beta}_(.45){\simeq}\ar[r]& 0\\
\mathcal S^{\le\prime}\colon0\ar[r]&\coim (\pi^{\lle (\lbe 1\lbe)})^{*}\ar[r]^(.52){\widetilde{(\lbe\pi^{(\lbe 1\lbe)}\lbe)^{\lbe*}}}&
H^{\le 1}\lbe(k,G\le)^{*}\ar[r]^(.52){\widetilde{(\lbe\iota^{(\lbe 1\lbe)}\lbe)^{\lbe*}}}&\ar@/^1pc/[l]_(.47){\tau}\img (i^{\lle (\lbe 1\lbe)})^{*}\ar[r]&0,
}
\]
where $\tau\defeq\gamma\circ\sigma\circ\beta^{-1}$. Clearly, $\tau$ is a set-theoretic section of $\widetilde{(\lbe\iota^{(\lbe 1\lbe)}\lbe)^{\lbe*}}$ which satisfies condition (N1). Further, by Remark \ref{lnd}\,, the topology of $H^{\le 1}\lbe(k,G\le)^{*}$ is the Nagao topology of $H^{\le 1}\lbe(k,G\le)^{*}$ determined by $(|\mathcal S^{\le\prime}|,\tau)$, where $|\mathcal S^{\le\prime}|$ is the extension of abelian groups associated with $\mathcal S^{\le\prime}$ (see Definition \ref{nak}). Since 
$\beta$ is a topological isomorphism, $\alpha$ is strict by Lemma \ref{kal}(i) and 
$\gamma\circ\sigma=\tau\circ\beta$, the proposition follows from Lemma \ref{nag}.
\end{proof}

\begin{proposition} \label{kon00} The pairing
\[
\langle-,-\rangle_{ 1}\e\colon H^{\lle 1}\lbe(k,G\e)\times H^{\le 0}_{\lbe\sigma}\lbe(k,G^{\le \vee})\to \Q/\Z	
\]
induced by \eqref{bpg} is continuous. 
\end{proposition}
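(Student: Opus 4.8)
The plan is to deduce this from Proposition \ref{pt}, applied with $A = H^1(k,G)$ and $B = H^0_\sigma(k,G^\vee)$. Both groups are locally compact abelian topological groups --- the former by Proposition \ref{fc}, the latter by the remark following Definition \ref{dnt} --- so Proposition \ref{pt} is applicable once its three hypotheses are checked. Hypothesis (ii), that $\langle -, \xi\rangle_1 \colon H^1(k,G) \to \mathbb{Q}/\mathbb{Z}$ is continuous at $0$ for every $\xi \in H^0(k,G^\vee)$, is exactly Lemma \ref{kont}. Hypothesis (iii), that the map $\xi \mapsto \langle -, \xi\rangle_1$ from $H^0_\sigma(k,G^\vee)$ to $H^1(k,G)^*$ is continuous at $0$, is immediate from Proposition \ref{kon}, which asserts that this map, namely $\gamma_\sigma$, is in fact continuous and strict.

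That leaves hypothesis (i), continuity of the pairing at $(0,0)$, which I would derive again from the continuity of $\gamma_\sigma$: choosing a compact neighborhood $C$ of $0$ in $H^1(k,G)$ (possible by local compactness), the set $W = \{\chi \in H^1(k,G)^* : \chi(C) = \{0\}\}$ is an open neighborhood of $0$ in the compact-open topology, so there is an open neighborhood $V$ of $0$ in $H^0_\sigma(k,G^\vee)$ with $\gamma_\sigma(V) \subseteq W$; taking $U$ to be the interior of $C$ then gives $\langle U, V\rangle_1 = \{0\}$. With (i)--(iii) in hand, Proposition \ref{pt} finishes the proof.

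I do not expect a genuine obstacle here: the substantive work has already been absorbed into Lemma \ref{kont} (which rests on Theorem \ref{ros18}(iii) together with the openness of $\iota^{(1)}$) and, more delicately, into Proposition \ref{kon}, whose proof matches the Nagao topology on $H^0_\sigma(k,G^\vee)$ with the topology transported from $H^1(k,G)^*$ through the comparison diagram \eqref{hell2}. One could also phrase the argument more compactly by noting that $\langle -, -\rangle_1$ factors as the composite of $\mathrm{id} \times \gamma_\sigma \colon H^1(k,G) \times H^0_\sigma(k,G^\vee) \to H^1(k,G) \times H^1(k,G)^*$ with the evaluation pairing, which is continuous since $H^1(k,G)$ is locally compact; but as the continuity of the evaluation pairing is not among the facts recorded earlier, routing the argument through Proposition \ref{pt} is the cleaner option.
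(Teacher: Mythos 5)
Your overall scaffolding is the same as the paper's: you reduce to Proposition \ref{pt} with $A=H^{1}(k,G)$ and $B=H^{0}_{\sigma}(k,G^{\vee})$, and you verify hypothesis (ii) by Lemma \ref{kont} and hypothesis (iii) by Proposition \ref{kon}, exactly as in the text. The divergence is in hypothesis (i), continuity at $(0,0)$: the paper establishes it by going down to the affine level, combining the continuity of Rosengarten's pairing $[-,-]_{1}$ on $H^{1}(k,L)\times H^{1}(k,L^{D})$ (Theorem \ref{ros18}(iii)) with the openness of $\iota^{(1)}$ (Corollary \ref{got}), the continuity of $g^{(0)}$ and the compatibility diagram of Lemma \ref{lem:cech}, whereas you want to deduce it from the continuity of $\gamma_{\sigma}$ alone.

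In that deduction there is one unjustified step: for an arbitrary compact neighborhood $C$ of $0$, the set $W=\{\chi\in H^{1}(k,G)^{*}:\chi(C)=\{0\}\}$ is \emph{not} open in the compact-open topology by definition, since $\{0\}$ is not open in $\T$; for a general locally compact group (e.g.\ $\R$) the annihilator of a compact neighborhood fails to be open, and if your step were formal then hypothesis (i) of Proposition \ref{pt} would follow from (iii) and would not need to be listed separately. What rescues the argument is the specific structure of $H^{1}(k,G)$ given by Proposition \ref{fc}: it is Hausdorff, locally compact and totally disconnected, hence contains a compact open subgroup $K$. Taking $C=U=K$, one has $W=\{\chi:\chi(K)\subseteq N\}$ for any small arc $N\subset\T$ containing no nontrivial subgroup, because $\chi(K)$ is a subgroup of $\T$; this set is genuinely open in the compact-open topology, and then $\langle K,V\rangle_{1}=\{0\}$ for an open $V$ with $\gamma_{\sigma}(V)\subseteq W$, which is what you need. (If you insist on an arbitrary compact neighborhood $C$, you must additionally use that $H^{1}(k,G)$ is torsion, so that the closed subgroup generated by $C$ is compact and its annihilator open.) With this patch your route is correct and a bit leaner than the paper's at this point, since it does not invoke Theorem \ref{ros18}(iii) or the cup-product compatibility a second time; the price is that everything rests on Proposition \ref{kon} plus the local profiniteness of $H^{1}(k,G)$. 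Finally, your discarded alternative via the evaluation map would only give continuity with values in $\T$, and upgrading that to continuity into the discrete group $\Q/\Z$ (the notion encoded by Lemma \ref{cpa}) needs the same compact-open-subgroup argument, so setting it aside was the right call.
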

\begin{proof} By Propositions \ref{pt} and \ref{kon} and Lemma \ref{kont}\,, we need only check continuity at $(0,0)$. By Theorem \ref{ros18}(iii), the pairing
	$\left[-,-\right]_{\e 1}\colon  H^{\le 1}\lbe(k,L\le)\times H^{\le 1}\lbe(k,L^{\be D})\to \Q/\Z$ is continuous. Consequently, there exist open neighborhoods $U$ of $0_{H^{\le 1}\lbe(k,L\le)}$ and $V$ of $0_{H^{\le 1}\lbe(k,L^{\lbe D})}$ such that $\left[\e U,V\e\right]_{\e 1}=\{0\}$. Since $i^{(1)}\colon H^{1}(k,L)\to H^{1}(k,G\le)$ is open by Corollary \ref{got} and $g^{(0)}\colon H^{\le 0}_{\lbe\sigma}\lbe(k,G^{\le \vee})\to H^{1}(k,L^{\be D})$ is continuous, the sets $i^{(1)}(U)$ and $(\le g^{(0)}\lbe)^{-1}(V)$ are open neighborhoods of $0_{H^{1}\lbe(k,G\lle)}$ and $0_{H^{\lle 0}\lbe(k,G^{\le \vee})}$, respectively. Now, by \cite[Lemma 07MC]{sp} and Lemma \ref{lem:cech}\,, the following diagram commutes (up to sign):
	\begin{equation*}
		\begin{tikzcd}
			\left[-,-\right]_{\e 1}\colon  H^{\le 1}\lbe(k,L\le) \arrow[d,shift left=3.5ex,"i^{\lle(1)}"] \arrow[r, phantom, "\times"] &  H^{\le 1}\lbe(k,L^{\be D}) \arrow{r} & \Q/\Z \\
			\langle-,-\rangle_{\e 1}\e\colon H^{\le 1}\lbe(k,G\le) \arrow[r, phantom, "\times"]& H^{\le 0}_{\lbe\sigma}\lbe(k,G^{\le \vee})  \arrow[u,"g^{\lle(0)}"'] \arrow{r} & \Q/\Z \arrow[u, equals]
		\end{tikzcd}
	\end{equation*}
	Consequently, $\langle\le i^{(1)}\lbe(U), (\le g^{(0)}\lbe)^{-1}(V)\rangle_{1}=\left[\e U, g^{(0)}(\le g^{(0)}\lbe)^{-1}(V)\e\right]_{1}\subseteq \left[\e U, V\e\right]_{1}=\{0\}$.
\end{proof}

\begin{theorem-definition} Let $k$ be a non-archimedean local field, let $G\in\ck$ be an extension $\mathcal E=\mathcal E(L,\iota, G,\pi, A)$ and let $G^{\le\vee}=[\e L^{\! D}\!\be\overset{\!v}{\to}\! A^{t}\e]$ be the complex associated to the $k$-$1$-motive $(G,\mathcal E\le)^{\vee}=(L^{\be D}\be, v, \mathcal E(0,0, A^{t},{\rm Id}_{A^{t}}, A^{t}\le)$. 
\begin{enumerate}
\item[(i)] If $\sigma$ and $H^{\le 0}_{\lbe\sigma}\lbe(k,G^{\le \vee})$ are as in Definition {\rm \ref{dnt}}\,, then the topology of $H^{\le 0}_{\lbe\sigma}\lbe(k,G^{\le \vee})_{\rm Haus}$ is independent of the choice of $\sigma$ and depends only on $\mathcal E$. The corresponding topological group will be denoted by $H^{\le 0}_{\lbe\mathcal E}\lbe(k,G^{\le \vee})_{\rm Haus}$.

\item[(ii)] The pairing \eqref{bpg} induces a continuous perfect pairing of Hausdorff and locally compact abelian topological groups
\[
H^{\le 1}\lbe(k,G\lle)\times H^{\le 0}_{\lbe\mathcal E}\lbe(k,G^{\le \vee})_{\rm Haus}\to\Q/\lle\Z,
\]
where $H^{\le 0}_{\lbe\mathcal E}\lbe(k,G^{\le \vee})_{\rm Haus}$ is the group defined in {\rm (i)}.
\end{enumerate}	
\end{theorem-definition}
\begin{proof} Consider the commutative diagram of abelian groups
\[
\xymatrix{L^{\be D}\be(k)^{\wedge}\ar[d]^{\simeq}\ar[r]^{(\lbe v^{(0)}\lbe)^{\wedge}}& A^{t}\lbe(k)\ar[r]^(.4){f^{(0)}_{\rm Haus}}\ar[d]^{\simeq}&
H^{\le 0}_{\lbe\sigma}\lbe(k,G^{\le \vee})_{\rm Haus}\ar[r]^(.53){g^{(0)}_{\lle\rm Haus}}\ar[d]^{(\lbe\gamma_{\sigma}\be)_{\rm Haus}}&H^{\le 1}\lbe(k,L^{\be D})\ar[d]^{\simeq}\ar[r]^{v^{\lle(1)}}& H^{\le 1}\lbe(k,A^{t})\ar[d]^{\simeq}\\
H^{\le 2}\lbe(k,L\le)^{*}\ar[r]&H^{\e 1}(k,A\le)^{*}\ar[r]^{(\pi^{(\lbe 1\lbe)}\lbe)^{*}}&
H^{\le 1}\lbe(k,G\le)^{*}\ar[r]^{(i^{(\lbe 1\lbe)}\lbe)^{*}}&H^{\le 1}\lbe(k,L\le)^{*}\ar[r]& A(k)^{*}.
}
\]
By Lemma \ref{got}\,, the bottom row of the above diagram is a strict exact sequence of abelian topological groups. Further, all maps on the top row are continuous and strict by the definition of the topology of $H^{\le 0}_{\lbe\sigma}\lbe(k,G^{\le \vee})$ and Lemmas \ref{lem:stc}\le(i), \ref{bas}\le(i) and \ref{ko}\,. On the other hand, the first and fourth (respectively, second and fifth) vertical maps are topological isomorphisms by Theorem \ref{ros18}\,, (i) and (iii) (respectively, Milne\,-Tate duality). The third vertical map is continuous and strict by Proposition \ref{kon} and Lemma \ref{bas}\le(i). Now, by Lemma \ref{nos}\,, the top row is exact except perhaps at $A^{t}\lbe(k)$. Since $f^{(0)}_{\rm Haus}$ is continuous and $\img((\lbe v^{(0)}\lbe)^{\wedge})=\overline{\img v^{(0)}}=\overline{\krn f^{(0)}}$ by Lemma \ref{lem:eas}\,, we have $f^{(0)}_{\rm Haus}(\e\img\be( (\lbe v^{(0)}\lbe)^{\wedge}))=f^{(0)}_{\rm Haus}(\e\overline{\krn f^{(0)}}\e)\subseteq \overline{f^{(0)}_{\rm Haus}({\krn f^{(0)}})}=\overline{\{0\}}=\{0\}$ (since $H^{\le 0}_{\lbe\sigma}\lbe(k,G^{\le \vee})_{\rm Haus}$ is Hausdorff). Thus $\img\be((\lbe v^{(0)}\lbe)^{\wedge})\subseteq \krn f^{(0)}_{\rm Haus}$. The reverse inclusion follows from the commutativity of the first two squares of the diagram, the exactness of the bottom row at $H^{\lle 1}\lbe(k,A\le)^{*}$, the surjectivity of the first vertical map and the injectivity of the second vertical map. Thus the above diagram is an exact and commutative diagram in the category of abelian topological groups. It now follows from the five lemma that the map $(\lbe\gamma_{\sigma}\be)_{\rm Haus}\colon  H^{\le 0}_{\lbe\sigma}\lbe(k,G^{\le \vee})_{\rm Haus}\to H^{\le 1}\lbe(k,G\le)^{*}$ is an algebraic and topological isomorphism. Assertion (i) is now clear since the topology of $H^{\le 1}\lbe(k,G\le)^{*}$ is independent of the choice of $\sigma$. Further, the continuity and perfectness of the pairing in (ii) follows from Propositions \ref{kon00} and \ref{pt} and the Pontryagin duality theorem. See also Remark \ref{hcont}(a).
\end{proof}

\begin{remark} The proof of the theorem shows that $H^{\le 0}_{\lbe\mathcal E}\lbe(k,G^{\le \vee})_{\rm Haus}$ is a topological extension
\[
\hskip 1.5cm 0\to A^{t}\lbe(k)/\,\overline{\img v^{(0)}}\to H^{\le 0}_{\lbe\mathcal E}\lbe(k,G^{\le \vee})_{\rm Haus}\to \krn\be[H^{\e 1}(k,L^{\be D})\overset{v^{\lle(1)}}{\to} H^{1}(k,A^{t})]\to 0.
\]
The left (respectively, right)-hand group above is profinite and second countable by Proposition \ref{topg} (respectively, locally profinite, second countable and of finite exponent by Lemma \ref{rle}). Note that, if $G$ is smooth, then $H^{\e 1}(k,G\le)$ is discrete and torsion by Proposition \ref{fc} and we conclude that $H^{\le 0}_{\lbe\mathcal E}\lbe(k,G^{\le \vee})_{\rm Haus}\simeq H^{\le 1}\lbe(k,G\le)^{*}$ is profinite and second countable.
\end{remark}

\smallskip

We now turn to the proofs of Theorems \ref{thm:main}(i) and \ref{main2}\,.

\smallskip

By the exactness of the sequence \eqref{bseq}, the abelian group $H^{\le 1}\lbe(k,G^{\le\vee}\le)$ is torsion. We will equip it with the {\it discrete} topology.
If $M$ is the complex associated to an arbitrary $k$-$1$-motive, the groups $H^{-1}\lbe(k,M\le)$ and $H^{\le 2}\lbe(k,M\le)$ will also be equipped with the discrete topology.

\begin{theorem} Let $k$ be a non-archimedean local field, let $G\in\ck$ be given as an extension $\mathcal E=\mathcal E(L,\iota, G,\pi, A)$ and let $G^{\le\vee}=[\e L^{\! D}\!\be\overset{\!v}{\to}\! A^{t}\e]$ be the complex associated to the $k$-$1$-motive $(G,\mathcal E\le)^{\vee}=(L^{\be D}\be, v, \mathcal E(0,0, A^{t},{\rm Id}_{A^{t}}, A^{t}\le)$. Then the pairing $\langle-,-\rangle_{ 0}\colon G(k)\times H^{\le 1}\lbe(k,G^{\lle \vee}\lbe)\to \Q/\lle\Z$ \eqref{bpg} induces a continuous perfect pairing of Hausdorff and locally compact abelian topological groups
\[
G\lbe(k)^{\wedge}\times H^{\le 1}\lbe(k,G^{\lle \vee}\lbe)\to\Q/\lle\Z\e.
\]	
\end{theorem}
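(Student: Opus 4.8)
The plan is to prove the duality by a dévissage along the exact sequence $0\to L\to G\to A\to 0$ defining $\mathcal E$ and its ``dual'', reducing the statement to Rosengarten's duality (Theorem \ref{ros18}) and Milne\,-Tate duality for $A$.

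First I would record the two short exact sequences that drive the dévissage. Applying Corollary \ref{got} to $0\to L\to G\to A\to 0$ and writing $N=\krn[\,\partial\colon A(k)\to H^{\le 1}\lbe(k,L)\,]$, one obtains a strict exact sequence $0\to L(k)\to G(k)\to N\to 0$ of Hausdorff, locally compact, second countable abelian groups, in which $N$ is closed in the compact group $A(k)$, hence profinite. Since profinite completion is right-exact on strict exact sequences, this yields an exact sequence of profinite groups
\[
L(k)^{\wedge}\to G(k)^{\wedge}\to N\to 0 .
\]
Dually, from the exact sequence \eqref{bseq} attached to $G^{\le\vee}=[\,L^{\be D}\!\overset{v}\to A^{t}\,]$ one extracts the short exact sequence of discrete abelian groups
\[
0\to \cok v^{(1)}\to H^{\le 1}\lbe(k,G^{\le\vee})\overset{g^{\lle(1)}}\to H^{\le 2}\lbe(k,L^{\be D})\to 0 ,
\]
with $\cok v^{(1)}=H^{\le 1}\lbe(k,A^{t})/\img v^{(1)}$, whose Pontryagin dual is a short exact sequence of profinite groups.

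Next I would identify the two sequences as Pontryagin duals of one another, term by term. Rosengarten's duality (Theorem \ref{ros18}(ii)) gives $L(k)^{\wedge}\isoto H^{\le 2}\lbe(k,L^{\be D})^{*}$. Milne\,-Tate duality gives $H^{\le 1}\lbe(k,A^{t})^{*}\cong A(k)$ and Rosengarten's duality (Theorem \ref{ros18}(iii)) gives $H^{\le 1}\lbe(k,L^{\be D})^{*}\cong H^{\le 1}\lbe(k,L)$; dualizing $H^{\le 1}\lbe(k,L^{\be D})\overset{v^{(1)}}\to H^{\le 1}\lbe(k,A^{t})\to \cok v^{(1)}\to 0$ then identifies $(\cok v^{(1)})^{*}$ with the kernel of the transpose of $v^{(1)}$, and the key compatibility — that this transpose equals $\pm\,\partial$ — gives $(\cok v^{(1)})^{*}\cong N$. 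Moreover, the functoriality of the biextension pairing \eqref{bpair}, together with the fact that the biextension $\mathcal P^{\le\prime}$ of $(G,G^{\le\vee})$ is pulled back from the Poincar\'e biextension along $(\pi,\mathrm{Id})$, shows that $\langle-,-\rangle_{0}$ restricts on $L(k)\times H^{\le 2}\lbe(k,L^{\be D})$ (through $g^{\lle(1)}$) to $\pm$ Rosengarten's pairing and induces on $N\times\cok v^{(1)}$ the Milne\,-Tate pairing. Because on $L(k)$ the pairing is Rosengarten's — which factors through $L(k)^{\wedge}$ — and because the ``quotient'' pairing on $N\times\cok v^{(1)}$ is Milne\,-Tate's with $N$ profinite, the pairing $\langle-,-\rangle_{0}$ factors through $G(k)^{\wedge}\times H^{\le 1}\lbe(k,G^{\le\vee})$ and defines a continuous homomorphism $\gamma_{0}\colon G(k)^{\wedge}\to H^{\le 1}\lbe(k,G^{\le\vee})^{*}$ (the preimage of a basic open set of the compact-open topology is a finite intersection of open subgroups).

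All of this fits into a commutative ladder
\[
\xymatrix{
& L(k)^{\wedge}\ar[r]\ar[d]^{\wr}& G(k)^{\wedge}\ar[r]\ar[d]^{\gamma_{0}}& N\ar[r]\ar[d]^{\wr}&0\\
0\ar[r]&H^{\le 2}\lbe(k,L^{\be D})^{*}\ar[r]& H^{\le 1}\lbe(k,G^{\le\vee})^{*}\ar[r]& (\cok v^{(1)})^{*}\ar[r]&0
}
\]
with exact rows and with isomorphisms on the two ends (left-exactness of the top row is not needed for the diagram chase). The five lemma then shows $\gamma_{0}$ is an algebraic isomorphism, and being a continuous bijection from a compact group onto a Hausdorff one it is a topological isomorphism; the Pontryagin duality theorem gives that $H^{\le 1}\lbe(k,G^{\le\vee})\to(G(k)^{\wedge})^{*}$ is a topological isomorphism as well, so the induced pairing is perfect. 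Finally $G(k)^{\wedge}$ is profinite and $H^{\le 1}\lbe(k,G^{\le\vee})$ is discrete and torsion, so both lie in the class $\s C$. The main obstacle is the package of compatibility statements invoked above: that $\langle-,-\rangle_{0}$ is compatible with the connecting homomorphisms of $0\to L\to G\to A\to 0$ and of the dual triangle $A^{t}\to G^{\le\vee}\to L^{\be D}[1]$, so that the transpose of $v^{(1)}$ is $\pm\partial$ and the ladder commutes; these are formal but sign-sensitive consequences of the functoriality and symmetry of $\mathcal P^{\le\prime}$ and of the standard compatibility of cup products with distinguished triangles.
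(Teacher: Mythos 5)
Most of your dévissage coincides in substance with the paper's proof: the same two inputs (Theorem \ref{ros18}(ii),(iii) and Milne\,-Tate duality), the same five-lemma ladder comparing the completed sequence coming from $0\to L\to G\to A\to 0$ with the Pontryagin dual of the tail of \eqref{bseq}, and the same upgrade ``continuous bijection from a compact group to a Hausdorff group is a topological isomorphism''. Your repackaging of the top row via $0\to L(k)\to G(k)\to N\to 0$ and right-exactness of profinite completion (instead of the paper's appeal to [GAT09, Lemma 2.1] plus separate exactness checks), and your observation that left-exactness is not needed for the chase, are fine.

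The genuine gap is the step you dispose of in one sentence: that $\langle-,-\rangle_{0}$ ``factors through $G(k)^{\wedge}\times H^{\le 1}\lbe(k,G^{\le\vee})$ and defines a continuous homomorphism $\gamma_{0}$''. What must be shown is that for \emph{every} $\xi\in H^{\le 1}\lbe(k,G^{\le\vee})$ the character $\langle-,\xi\rangle_{0}\colon G(k)\to\Q/\Z$ is continuous (kills an open subgroup), and your justification does not deliver this: for $\xi\notin\img f^{\lle(1)}$ the character does not vanish on $L(k)$, so there is no ``quotient pairing on $N\times\cok v^{(1)}$'' available for it, and its restriction to $\iota(k)L(k)$ controls nothing transversally because $L(k)$ is \emph{not} open in $G(k)$ (its image in $A(k)$ under $\pi(k)$ is open, not discrete); nor can you split $\xi$ into an $H^{\le 1}\lbe(k,A^{t})$-part and an $H^{\le 2}\lbe(k,L^{\be D})$-part. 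Note that already in the degenerate case $L=0$ your claim is exactly the continuity of the Tate pairing in the $A(k)$-variable, which over a local function field is not formal (see Remark \ref{ltd}: one cannot argue with $nA(k)$ being open, and Tan's norm argument is needed). This is precisely the point where the paper invests its main technical machinery: having obtained the middle map $\zeta\colon G(k)^{\wedge}\to H^{\le 1}\lbe(k,G^{\le\vee})^{*}$ and shown it is an algebraic isomorphism by the five lemma, it proves continuity by choosing, via Theorem \ref{nag2}, a section $\sigma$ realizing the topology of $G(k)^{\wedge}$ as a Nagao topology of the extension $0\to L(k)^{\wedge}\to G(k)^{\wedge}\to\img\pi(k)^{\wedge}\to 0$, transporting it to the section $\tau=\zeta\circ\sigma\circ\varepsilon^{-1}$ of the dual extension of $H^{\le 1}\lbe(k,G^{\le\vee})^{*}$ (whose compact-open topology is the Nagao topology for any admissible section, as in Remark \ref{lnd}), and then invoking Lemma \ref{nag} to deduce continuity of $\zeta$ from that of the outer isomorphisms. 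You must either import this Nagao-section argument or supply an independent proof of the continuity/factorization claim; as written, that step is asserted rather than proved, and it is the crux of the theorem.
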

\begin{proof} Consider the following commutative diagram of abelian groups, whose top (respectively, bottom) row is induced by \eqref{bex2} (respectively, \eqref{bseq}): 
\begin{equation}\label{kat}
\xymatrix{0\ar[r]&L\lbe(k)^{\wedge}\ar[r]\ar[d]^(.45){\delta}_(.45){\simeq}&
G\lbe(k)^{\wedge}\ar[r]^{\pi(k)^{\wedge}}\ar[d]^(.45){\zeta}& A\lbe(k)\ar[d]^{\simeq}\ar[r]^(.45){\partial}& H^{1}(k,L\le)\ar[d]^{\simeq}\\
0\ar[r]&H^{\lle 2}\lbe(k,L^{\be D})^{*}\ar[r]^(.5){(\le g^{(1)}\lbe)^{*}}&H^{\le 1}\lbe(k,G^{\le \vee})^{*}\ar[r]^(.52){(\le f^{\lle(1)}\lbe)^{*}}&
H^{\lle 1}\lbe(k,A^{t})^{*}\ar[r]^(.48){(v^{\lle (1)}\lbe)^{*}}&H^{\lle 1}\lbe(k,L^{\be D})^{*}.
}
\end{equation}
The bottom row is exact since $v^{\lle(1)}$ is continuous by Lemma \ref{ko} and the groups $H^{\lle 1}\lbe(k,A^{t}), H^{\le 1}\lbe(k,G^{\le \vee})$ and $H^{\lle 2}\lbe(k,L^{\be D})$ are discrete. The first and fourth vertical maps are isomorphisms by Theorem \ref{ros18}\e, (ii) and (iii), and the third vertical map is an isomorphism by Milne\,-Tate duality. We claim that the top row, which is induced by the exact sequence of Lemma \ref{got}\,, is exact at $A(k)$. Indeed, since $H^{1}(k,L\le)$ is Hausdorff, $\img \pi(k)=\krn \partial=\partial^{\e-1}\lbe(0)\subseteq A(k)$ is closed and therefore profinite, whence $\img (\pi(k)^{\wedge})=(\img \pi(k))^{\wedge}=\img \pi(k)=\krn \partial$ by the right-exactness of the profinite completion functor. The top row of the diagram is also exact at $G\lbe(k)^{\wedge}$ (respectively, $L\lbe(k)^{\wedge}$) by \cite[Lemma 2.1]{gat09} (respectively, the commutativity of the first square of the diagram, the injectivity of the map $\delta$ and the injectivity of $(\le g^{(1)}\lbe)^{*}$). Thus \eqref{kal} is an exact and commutative diagram of abelian groups. Now the five lemma shows that the map $\zeta$ in \eqref{kat} is an isomorphism of abelian groups. Further, by 
Lemma \ref{lem:stc}(i), $\zeta$ will be strict, and therefore a topological isomorphism, if it is continuous. To establish the continuity of $\zeta$, we argue as before: we consider the commutative diagram of abelian groups with strict exact rows
\[
\xymatrix{ \mathcal S\colon 0\ar[r]&L\lbe(k)^{\wedge}\ar[r]\ar[d]^(.45){\delta}_(.45){\simeq}&
G\lbe(k)^{\wedge}\ar[r]^(.48){\widetilde{\pi(k)^{\wedge}}}\ar[d]^(.45){\zeta}_(.45){\simeq}&\ar@/^1pc/[l]_(.55){\sigma}\img \pi(k)^{\wedge}\ar[d]^(.45){\varepsilon}_(.45){\simeq}\ar[r]& 0\\
\mathcal S^{\le\prime}\colon 0\ar[r]&H^{\lle 2}\lbe(k,L^{\be D})^{*}\ar[r]^(.52){(\le g^{(1)}\lbe)^{*}}&
H^{\le 1}\lbe(k,G^{\le \vee}\lle)^{*}\ar[r]^(.52){\widetilde{(\le f^{\lle(1)}\lbe)^{*}}}&\ar@/^1pc/[l]_(.47){\tau}\img (\le f^{\lle(1)}\lbe)^{*}\ar[r]&0,
}
\]
where $\sigma$ is a set-theoretic section of $\widetilde{\pi(k)^{\wedge}}$
which satisfies condition (N1) and is such that the topology of $G\lbe(k)^{\wedge}$ is the Nagao topology of $G\lbe(k)^{\wedge}$ determined by $(|\mathcal S|,\sigma)$ (see Theorem \ref{nag2}) and $\tau\defeq\zeta\circ\sigma\circ\varepsilon^{-1}$. Clearly, $\zeta\circ\sigma=\tau\circ\varepsilon$ and $\tau$ is a set-theoretic section of $\widetilde{(\le f^{\lle(1)}\lbe)^{*}}$ which satisfies condition (N1). Further, the topology of $H^{\le 1}\lbe(k,G^{\le \vee}\lle)^{*}$ is the Nagao topology of $H^{\le 1}\lbe(k,G^{\le \vee}\lle)^{*}$ determined by $(|\mathcal S^{\le\prime}|,\tau)$ (cf. Remark \ref{lnd}). The continuity of $\zeta$ now follows from Lemma \ref{nag}\,. Finally, since $H^{\le 1}\lbe(k,G^{\le\vee}\le)$ is discrete, the continuity and perfectness of the pairing $G\lbe(k)^{\wedge}\times H^{\le 1}\lbe(k,G^{\lle \vee}\lbe)\to\Q/\lle\Z$ follows from the above, the Pontryagin duality theorem and Remark \ref{hcont}(b).
\end{proof}

If $k$ is a non-archimedean local field and $M=[\le K^{\lbe D}\!\overset{\!u}{\to}\!  G\e\le]$ is an arbitrary $k$-$1$-motive, set
\[
H^{\e -1}_{\lbe\wedge\be}\be\lle(k,M\le)=\krn\!\!\lbe\left[\lbe K^{\be\lle D}\be(k)^{\wedge}\overset{\! (u^{(0)})^{\wedge}}{\lra} G(k)^{\wedge}\le\right]\!.
\]
Since $u^{(0)}$ is continuous by the discreteness of $K^{\be\lle D}\be(k)$, $H^{\e -1}_{\lbe\wedge\be}\lbe(k,M\le)$ is a profinite group.

\begin{lemma} \label{mm'} Let $k$ be a non-archimedean local field, let $M=[\le K^{\be\lle D}\!\overset{\be\!u}{\to}\! G\,]$ be a $k$-$1$-motive and let $M^{\lle\vee}=[\le L^{\be D}\!\be\to\! \widetilde{G}\,]$ be the corresponding dual $k\e$-$1$-motive. Then there exists a canonical exact sequence of abelian groups
\[
0\to H^{\le -1}_{\wedge}\be\le(k,M\lle)\to H^{\le -1}_{\wedge}\be\le(k,\lbe(\lbe \widetilde{G}\le)^{\be\vee})\to L(k)^{\wedge}.
\]
\end{lemma}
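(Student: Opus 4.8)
The plan is to reduce the whole statement to the explicit kernel descriptions of $H^{-1}_{\wedge}$ together with one application of Lemma \ref{ker-cok}, the single non‑formal input being a left‑exactness property of profinite completion. First I would identify the complex $(\widetilde{G})^{\vee}=[\,K^{D}\!\xrightarrow{w}A^{tt}\,]$ with the complex $M^{\prime}=C^{\bullet}(\pi\circ u)=[\,K^{D}\!\xrightarrow{\pi\circ u}A\,]$ occurring in the exact sequence \eqref{can} $0\to L\to M\to M^{\prime}\to 0$ in $\cbk$. Concretely this is the identity $w=\kappa_{A}\circ\pi\circ u$: by construction $w(\chi)=\beta_{A^{t}}^{-1}\big(\chi_{*}(\mathcal E(K,\tilde{\iota},\widetilde{G},\widetilde{\pi},A^{t}))\big)$, and the extension $\mathcal E(K,\tilde{\iota},\widetilde{G},\widetilde{\pi},A^{t})$ represents $\Phi_{M}(\pi\circ u)$, so the identity is exactly $\Psi_{M}\circ\Phi_{M}=\mathrm{Id}$ evaluated at $\pi\circ u$. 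Using this identification, the morphism $M\to(\widetilde{G})^{\vee}$ underlying \eqref{can} is the identity on $K^{D}$, so by the definition of $H^{-1}_{\wedge}$ and Lemma \ref{dle} one has $H^{-1}_{\wedge}(k,M)=\krn[(u^{(0)})^{\wedge}\colon K^{D}(k)^{\wedge}\to G(k)^{\wedge}]$; and since $A$ is an abelian variety, $A(k)$ is profinite by Proposition \ref{topg}, so $A(k)^{\wedge}=A(k)$, $((\pi\circ u)^{(0)})^{\wedge}=\pi(k)^{\wedge}\circ(u^{(0)})^{\wedge}$ by functoriality of completion, and $H^{-1}_{\wedge}(k,(\widetilde{G})^{\vee})=\krn[\pi(k)^{\wedge}\circ(u^{(0)})^{\wedge}\colon K^{D}(k)^{\wedge}\to A(k)]$. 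The inclusion of the first kernel into the second (both inside $K^{D}(k)^{\wedge}$) is the first map of the asserted sequence.

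Applying Lemma \ref{ker-cok} to the composable pair $(u^{(0)})^{\wedge}\colon K^{D}(k)^{\wedge}\to G(k)^{\wedge}$ and $\pi(k)^{\wedge}\colon G(k)^{\wedge}\to A(k)$ then produces an exact sequence $0\to H^{-1}_{\wedge}(k,M)\to H^{-1}_{\wedge}(k,(\widetilde{G})^{\vee})\to\krn\pi(k)^{\wedge}$, the third arrow being the restriction of $(u^{(0)})^{\wedge}$; so it remains to exhibit a canonical isomorphism $\krn\pi(k)^{\wedge}\simeq L(k)^{\wedge}$. By Corollary \ref{got} the sequence $0\to L(k)\to G(k)\xrightarrow{\pi(k)}A(k)$ is strict exact, and $\img\pi(k)=\krn[\,\partial\colon A(k)\to H^{1}(k,L)\,]$ is closed in the profinite group $A(k)$, hence is itself profinite; applying the right‑exact profinite completion functor to the topological extension $0\to L(k)\to G(k)\to\img\pi(k)\to 0$ (Lemma \ref{lem:stc}(i) and \cite[Proposition C.1.4]{ros18}) identifies $\krn\pi(k)^{\wedge}$ with $\img[\,\iota(k)^{\wedge}\colon L(k)^{\wedge}\to G(k)^{\wedge}\,]$. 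Thus the lemma reduces to the injectivity of $\iota(k)^{\wedge}$.

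Proving that $\iota(k)^{\wedge}$ is injective is the heart of the matter, and it is where the hypothesis that $k$ is local — equivalently, that $A(k)$ is compact — is really used. I would show that the subgroups $V\cap L(k)$, with $V$ ranging over the open finite‑index subgroups of $G(k)$, are cofinal among the open finite‑index subgroups of $L(k)$; this cofinality forces $\iota(k)^{\wedge}$ to be injective. So let $U\subseteq L(k)$ be open of finite index. Then $U$ is closed in $G(k)$, being open (hence closed) in the closed subgroup $L(k)$, and $G(k)/U$ is an extension of the profinite group $G(k)/L(k)=\img\pi(k)$ by the finite group $L(k)/U$, hence is profinite. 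Since $L(k)/U$ is finite and $G(k)/U$ is profinite, a finite intersection of open subgroups of $G(k)/U$ yields an open finite‑index subgroup $V^{\prime}$ with $V^{\prime}\cap(L(k)/U)=\{0\}$, and its preimage $V$ under $G(k)\twoheadrightarrow G(k)/U$ is open of finite index in $G(k)$ with $V\cap L(k)=U$. This establishes the cofinality, hence the injectivity of $\iota(k)^{\wedge}$ and the isomorphism $\krn\pi(k)^{\wedge}\simeq L(k)^{\wedge}$; composing it with the exact sequence of the previous paragraph gives the canonical exact sequence $0\to H^{-1}_{\wedge}(k,M)\to H^{-1}_{\wedge}(k,(\widetilde{G})^{\vee})\to L(k)^{\wedge}$.
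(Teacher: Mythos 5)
Your proof is correct, and its skeleton coincides with the paper's: you identify $(\widetilde{G}\e)^{\vee}$ with $[\e K^{D}\!\to\! A\e]$ (your justification via $\Psi_{M}\circ\Phi_{M}=\mathrm{Id}$ applied to $\pi\circ u$ is exactly what the paper's ``recall'' amounts to, stated more carefully), and you then apply Lemma \ref{ker-cok} to the composite $K^{D}(k)^{\wedge}\to G(k)^{\wedge}\to A(k)$, using $A(k)^{\wedge}=A(k)$. The genuine difference lies in how you identify $\krn \pi(k)^{\wedge}$ with $L(k)^{\wedge}$. The paper simply invokes the exactness of the top row of diagram \eqref{kat}, which was established in the proof of the preceding theorem by citing \cite[Lemma 2.1]{gat09} for exactness at $G(k)^{\wedge}$ and the commutative duality diagram (Rosengarten's duality for $L$ together with the surjectivity of $g^{(1)}$) for the injectivity of $L(k)^{\wedge}\to G(k)^{\wedge}$. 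You instead re-prove this step by purely topological means: right-exactness of profinite completion applied to the strict exact sequence $0\to L(k)\to G(k)\to\img \pi(k)\to 0$ from Corollary \ref{got}\,, plus profiniteness of $\img\pi(k)\subseteq A(k)$, gives $\krn\pi(k)^{\wedge}=\img \iota(k)^{\wedge}$, and your cofinality argument (every open finite-index subgroup $U$ of $L(k)$ equals $V\cap L(k)$ for some open finite-index $V\subseteq G(k)$, exploiting that $G(k)/L(k)$ is profinite) yields the injectivity of $\iota(k)^{\wedge}$. This variant is more elementary and self-contained, avoiding both \cite[Lemma 2.1]{gat09} and the duality input, at the cost of the topological bookkeeping you do carry out ($U$ is closed in $G(k)$, so $L(k)/U$ is finite discrete in $G(k)/U$ and the extension property for profiniteness applies); the only assertion left implicit is the standard passage from cofinality to injectivity of the map on completions, which is routine. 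So: correct, same main mechanism, but with an independent and more elementary treatment of the one nontrivial input $\krn\pi(k)^{\wedge}\simeq L(k)^{\wedge}$, which the paper gets for free from \eqref{kat}.
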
	
\begin{proof} Recall that $(\lbe \widetilde{G}\le)^{\be\vee}=[\le K^{\be\lle D}\e\overset{\be\!\pi\e\circ\e u}{\to}\e A\,]$, where $\pi\colon G\to A$ is the projection in \eqref{bex2}. Since $\krn\!(\pi(k)^{\wedge}\circ (u^{(0)})^{\wedge})=H^{\le -1}_{\wedge}\be\le(k,\lbe(\lbe \widetilde{G}\le)^{\be\vee})$ and $\krn\!(\pi(k)^{\wedge})=L(k)^{\wedge}$ by the exactness of the top row of diagram \eqref{kat}\e, the lemma follows by applying Lemma \ref{ker-cok} to the  pair of morphisms of abelian groups
\[
K^{\lbe D}\be(k)^{\wedge}\overset{\be (u^{(0)})^{\wedge}}{\to}G(k)^{\wedge}\overset{\!\pi(k)^{\wedge}}{\to}A(k).
\]
\end{proof}

\begin{lemma}\label{-1dp} Let $k$ be a non-archimedean local field, let $G$ be a commutative algebraic $k$-group given as an extension $\mathcal E=\mathcal E(L,\iota, G,\pi, A)$ with associated $k$-$1$-motive $(G,\mathcal E\le)=(0\e,0\e,\mathcal E(L,\iota, G,\pi, A)\le)$ and let $G^{\le\vee}=[\e L^{\! D}\!\be\overset{\!v}{\to}\! A^{t}\e]$ be the complex associated to $(G,\mathcal E\le)^{\le\vee}$. Then there exists a canonical continuous perfect pairing of abelian topological groups
\[
H^{\e -1}_{\lbe\wedge\be}\be\le(k,G^{\le \vee}\le)\times H^{\le 2}\lbe(k,G\le)\to\Q/\Z,
\]
where the left-hand group is profinite and the right-hand group is discrete and torsion.
\end{lemma}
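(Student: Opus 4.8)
The plan is to exhibit a canonical topological isomorphism $H^{\le -1}_{\lbe\wedge\be}(k,G^{\le\vee})\isoto H^{\le 2}\lbe(k,G\le)^{*}$ and take the associated evaluation pairing. First note that $H^{\le 2}\lbe(k,G\le)$ is discrete and torsion by Proposition \ref{fc}, so its Pontryagin dual is profinite; and $H^{\le -1}_{\lbe\wedge\be}(k,G^{\le\vee})=\krn\be[(v^{(0)})^{\wedge}\colon L^{\be D}(k)^{\wedge}\to A^{t}(k)^{\wedge}\le]$ is profinite as well, being a closed subgroup of the profinite group $L^{\be D}(k)^{\wedge}$ (profinite because $L^{\be D}(k)$ is finitely generated), while $A^{t}(k)^{\wedge}=A^{t}(k)$ by Proposition \ref{topg}, as $A^{t}$ is proper. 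Once the isomorphism is available, the evaluation pairing is automatically perfect, and its continuity will follow from Remark \ref{hcont}(b) and Proposition \ref{pt}: since $H^{\le 2}\lbe(k,G\le)$ is discrete, one needs only the continuity at $0$ of the resulting map $H^{\le -1}_{\lbe\wedge\be}(k,G^{\le\vee})\to H^{\le 2}\lbe(k,G\le)^{*}$, and by construction this map is a composite of continuous maps. Finally, functoriality of the biextension pairings \eqref{bpair} with respect to the $\vee$-dual pair of morphisms $\iota\colon L\to G$ and $G^{\le\vee}\to L^{\be D}[1]$ shows that this pairing extends $\langle-,-\rangle_{2}$ of \eqref{bpg} along the canonical map $H^{\le -1}\lbe(k,G^{\le\vee})\to H^{\le -1}_{\lbe\wedge\be}(k,G^{\le\vee})$, so the pairing is canonical; note that the latter map need not have dense image, which is why the completion genuinely enters the picture.

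To build the isomorphism, start from the long exact fppf cohomology sequence of $\mathcal E$ and the vanishing $H^{\le 2}\lbe(k,A\le)=0$ (Milne\,-Tate). Since all terms involved are discrete by Proposition \ref{fc}, this yields a strict exact sequence of discrete abelian groups $H^{\le 1}\lbe(k,A\le)\overset{\partial}{\to} H^{\le 2}\lbe(k,L\le)\overset{\iota^{(2)}}{\to} H^{\le 2}\lbe(k,G\le)\to 0$, where $\partial$ is the degree-one connecting homomorphism of $\mathcal E$. Taking Pontryagin duals gives a strict exact sequence of profinite groups $0\to H^{\le 2}\lbe(k,G\le)^{*}\to H^{\le 2}\lbe(k,L\le)^{*}\overset{\partial^{*}}{\to} H^{\le 1}\lbe(k,A\le)^{*}$. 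Now identify $H^{\le 2}\lbe(k,L\le)^{*}\cong L^{\be D}(k)^{\wedge}$ by Theorem \ref{ros18}(i) and $H^{\le 1}\lbe(k,A\le)^{*}\cong A^{t}(k)$ by Milne\,-Tate duality (Remark \ref{ltd}); both are topological isomorphisms. It remains to check that, under these identifications, the square
\[
\xymatrix{L^{\be D}(k)^{\wedge}\ar[r]^{(v^{(0)})^{\wedge}}\ar[d]^{\simeq}& A^{t}(k)\ar[d]^{\simeq}\\ H^{\le 2}\lbe(k,L\le)^{*}\ar[r]^{\partial^{*}}&H^{\le 1}\lbe(k,A\le)^{*}}
\]
commutes; granting this, a diagram chase shows that the induced map on kernels $H^{\le -1}_{\lbe\wedge\be}(k,G^{\le\vee})=\krn\be[(v^{(0)})^{\wedge}\le]\to\krn\partial^{*}=H^{\le 2}\lbe(k,G\le)^{*}$ is an algebraic isomorphism, and it is a topological isomorphism because it is the restriction of the topological isomorphism $L^{\be D}(k)^{\wedge}\isoto H^{\le 2}\lbe(k,L\le)^{*}$ to a closed subgroup.

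The commutativity of this square is the heart of the matter, and I expect it to be the main obstacle. Unwinding the duality isomorphisms, it is equivalent to the adjunction formula
\[
\langle\le\partial\xi,\chi\le\rangle_{L}=-\langle\le\xi,v^{(0)}\chi\le\rangle_{A}\qquad(\xi\in H^{\le 1}\lbe(k,A\le),\ \chi\in L^{\be D}(k)),
\]
where $\langle-,-\rangle_{L}\colon H^{\le 2}\lbe(k,L\le)\times L^{\be D}(k)\to\Q/\lle\Z$ is the cup-product pairing of Theorem \ref{ros18}(i) and $\langle-,-\rangle_{A}\colon H^{\le 1}\lbe(k,A\le)\times A^{t}(k)\to\Q/\lle\Z$ is the Tate pairing. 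This is precisely the compatibility of the biextension cup-product pairings \eqref{bpair} with the connecting homomorphisms of the two $\vee$-dual triangles $L\to G\to A\to L[1]$ and $A^{t}\to G^{\le\vee}\to L^{\be D}[1]\to A^{t}[1]$; recall that $v^{(0)}\colon L^{\be D}(k)=H^{\le -1}\lbe(k,L^{\be D}[1])\to H^{\le 0}\lbe(k,A^{t}\le)=A^{t}(k)$ is exactly the connecting homomorphism of the second triangle, which is how it enters the sequence \eqref{bseq} via Lemma \ref{dle}. The identity should follow from the functoriality of \eqref{bpair} together with the anticommutativity of cup products with boundary maps (\cite[Lemma 07MC]{sp}), after reducing via Lemma \ref{lem:cech} to \v{C}ech cohomology, where the pairings are given by explicit cocycle formulas. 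This is the same device by which the squares of diagram \eqref{kat} in the proof of the preceding theorem are shown to commute — that argument being exactly this adjunction in cohomological degrees $0$ and $1$; here one needs it for $H^{\le 2}\lbe(k,G\le)$ against $H^{\le -1}\lbe(k,G^{\le\vee})$, i.e. in the one remaining degree.
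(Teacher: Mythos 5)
Your proposal follows essentially the same route as the paper's proof: using $H^{\le 2}\lbe(k,A\le)=0$ to dualize the tail $H^{\le 1}\lbe(k,A\le)\to H^{\le 2}\lbe(k,L\le)\to H^{\le 2}\lbe(k,G\le)\to 0$, identifying $H^{\le 2}\lbe(k,L\le)^{*}$ with $L^{\be D}\be(k)^{\wedge}$ via Theorem \ref{ros18}(i) and $H^{\le 1}\lbe(k,A\le)^{*}$ with $A^{t}\lbe(k)$ via Milne\,-Tate duality, and concluding that the induced map on kernels $H^{\e -1}_{\lbe\wedge\be}\be\le(k,G^{\le\vee})\to H^{\le 2}\lbe(k,G\le)^{*}$ is a topological isomorphism. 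The only difference is one of emphasis: the paper simply asserts the commutativity of the square comparing $(v^{(0)})^{\wedge}$ with the dual of the connecting map (and gets strictness from Lemma \ref{lem:stc}(i) plus continuity of the middle vertical map), whereas you flag that compatibility explicitly and sketch its verification via the functoriality of \eqref{bpair} and the \v{C}ech-level cup-product argument, which is consistent with how the paper treats the analogous squares in diagram \eqref{kat}.
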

\begin{proof} Since $H^{\le 2}(k,A\lle)=0$ by \cite[proof of Theorem I.3.2 and Theorem III.7.8]{adt}\,, \eqref{bex2} induces an exact sequence of discrete and torsion abelian topological groups
$H^{\le 1}(k,A\lle)\to H^{\le 2}(k,L\lle)\to H^{\e 2}(k,G\le)\to 0$ whose Pontraygin dual is the bottom row of the following exact and commutative diagram of abelian groups:
\[
\xymatrix{0\ar[r]& H^{\e -1}_{\lbe\wedge\be}\be\lle(k,G^{\le \vee}\le)\ar[d]\ar[r]&L^{\be D}\be(k)^{\wedge}\ar[d]^{\simeq}\ar[r]^{(\lbe v^{(0)}\lbe)^{\wedge}}& A^{t}\lbe(k)\ar[d]^{\simeq}\\
0\ar[r]&H^{\e 2}(k,G\le)^{*}\ar[r]&	H^{\e 2}(k,L\le)^{*}\ar[r]&H^{\e 1}(k,A\le)^{*}.
}
\]
It follows that the left-hand vertical map above is an isomorphism of abelian groups which is continuous and strict by Lemma \ref{lem:stc}(i) and the continuity of the middle vertical map (see Theorem \ref{ros18}(i)). Thus the left-hand vertical map above is a topological isomorphism, which completes the proof.
\end{proof}

We may now prove Theorem \ref{main2}\,.

\begin{theorem}\label{-1bis} Let $k$ be a non-archimedean local field and let $M=[\le K^{ D}\!\to\! G\e\le]$ be a $k\e$-$1$-motive with corresponding dual $k\e$-$1$-motive $M^{\le\vee}=[L^{\be D}\!\to\! \widetilde{G}\e]$. Then there exists a canonical continuous perfect pairing of abelian topological groups
\[
H^{\e -1}_{\lbe\wedge\be}\be\le(k,M^{\le \vee}\le)\times H^{\le 2}\lbe(k,M\le)\to\Q/\Z,
\]
where the left-hand group is profinite and the right-hand group is discrete and torsion.
\end{theorem}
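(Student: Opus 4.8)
The plan is to reduce the statement about a general $k$-$1$-motive $M=[\,K^{D}\to G\,]$ to the already-proven pure case, Lemma \ref{-1dp}, via the standard dévissage that filters $M$ by its purely ``algebraic-group'' part. First I would record the two exact triangles attached to $M$: the sequence \eqref{vid}, $0\to G\to M\to K^{D}[1]\to 0$, and the sequence \eqref{can}, $0\to L\to M\to M^{\prime}\to 0$, where $M^{\prime}=[\,K^{D}\to A\,]=(\widetilde G)^{\vee}$ in the notation of the excerpt. Taking fppf hypercohomology of \eqref{vid} and using $H^{r}(k,G)=0$ for $r\ge 3$ (as in Lemma \ref{dle}) gives an exact sequence $H^{1}(k,K^{D})\to H^{2}(k,G)\to H^{2}(k,M)\to H^{2}(k,K^{D})\to 0$ of discrete torsion groups. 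Dually, applying $\krn[\,-(k)^{\wedge}\to-(k)^{\wedge}\,]$ along the composite $K^{D}(k)^{\wedge}\to G(k)^{\wedge}\to A(k)^{\wedge}$ and invoking Lemma \ref{ker-cok} is exactly what produces Lemma \ref{mm'}, the exact sequence $0\to H^{-1}_{\wedge}(k,M)\to H^{-1}_{\wedge}(k,(\widetilde G)^{\vee})\to L(k)^{\wedge}$; I would also extend it one step to the right by noting that the next term is controlled by $H^{-1}_{\wedge}(k,M)$'s cokernel, i.e.\ it continues into $\cok[\,K^{D}(k)^{\wedge}\to G(k)^{\wedge}\,]\to A(k)^{\wedge}$, which after Pontryagin duality matches $H^{2}(k,M)\to H^{2}(k,G)$.

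The core of the argument is then to assemble a ladder. On the bottom I put the Pontryagin dual of the cohomology sequence of \eqref{vid} (all terms discrete torsion, so duals are profinite); on the top I put the exact sequence of Lemma \ref{mm'}, suitably extended. The vertical maps are the pairing-induced maps $H^{-1}_{\wedge}(k,K^{D})\to H^{2}(k,K^{D})^{*}$, $H^{-1}_{\wedge}(k,(\widetilde G)^{\vee})\to H^{2}(k,G)^{*}$ (this is Lemma \ref{-1dp} applied to the pure $1$-motive $G^{\vee}=(\widetilde G)^{\vee}$ attached to the extension $\mathcal E=\mathcal E(L,\iota,G,\pi,A)$), and so on. Two of these vertical arrows, namely those over $K^{D}$-terms and over the $G$-terms, are topological isomorphisms: the $G$-term one is precisely Lemma \ref{-1dp}, while the $K^{D}$-term one is a degenerate case of the same lemma (take $L=0$, $G=$ a point, so that $G^{\vee}$ is the formal group $K^{D}$ itself and the pairing $K^{D}(k)^{\wedge}\times H^{2}(k,K)\to\Q/\Z$ is Theorem \ref{ros18}(i)); here I should be a little careful that $H^{-1}_{\wedge}(k,K^{D}[1])$ in the dévissage is literally $K^{D}(k)^{\wedge}$. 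Granting commutativity of the ladder (which follows from functoriality of the cup-product pairings \eqref{bpair} in $M$, as stated in the excerpt), the five lemma gives that the middle vertical map $H^{-1}_{\wedge}(k,M^{\vee})\to H^{2}(k,M)^{*}$ is an isomorphism of abelian groups.

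It remains to upgrade this to a topological isomorphism and to a perfect pairing. Since $H^{2}(k,M)$ is discrete torsion (Lemma \ref{dle} plus Proposition \ref{fc}), its dual is profinite; and $H^{-1}_{\wedge}(k,M^{\vee})$ is profinite by construction, being a closed subgroup of the profinite group $L^{D}(k)^{\wedge}$. So I only need the map $H^{-1}_{\wedge}(k,M^{\vee})\to H^{2}(k,M)^{*}$ to be continuous; by Lemma \ref{lem:stc}(i) continuity of a homomorphism from a profinite group to a profinite group forces strictness, hence it is automatically a topological isomorphism once continuous. Continuity in turn reduces, via Lemma \ref{cpa} and the fact that $H^{2}(k,M)$ is discrete (Remark \ref{hcont}(b)), to checking continuity of $\xi\mapsto\langle-,\xi\rangle_{2}$ at $0$ for fixed $\xi$; but the profinite topology on $H^{-1}_{\wedge}(k,M^{\vee})$ is defined so that the maps into the discrete groups $H^{2}(k,K^{D})^{*}$ and (via Lemma \ref{-1dp}) into $H^{2}(k,G)^{*}$ are continuous, so tracing through the commutative ladder gives the required continuity. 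Finally, perfectness: the induced maps $H^{2}(k,M)\to H^{-1}_{\wedge}(k,M^{\vee})^{*}$ and $H^{-1}_{\wedge}(k,M^{\vee})\to H^{2}(k,M)^{*}$ are, by the ladder and Lemma \ref{-1dp} together with Theorem \ref{ros18}(i), sandwiched between isomorphisms, so Pontryagin duality closes the argument.

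The step I expect to be the main obstacle is verifying that the ladder actually commutes with the correct identifications --- that is, that the boundary maps in the hypercohomology sequence of \eqref{vid} and of \eqref{can} are compatible (up to the usual sign) with the cup-product pairings \eqref{bpair} under duality, so that the five-lemma is legitimately applicable. This is the kind of diagram-chase that is conceptually routine but requires care matching $H^{r}(k,M)$ with its \v{C}ech avatar (Lemma \ref{lem:cech}) and tracking how the pairing restricts along $G\hookrightarrow M$ and projects along $M\twoheadrightarrow K^{D}[1]$; the functoriality statement at the end of the previous section is exactly what is needed, but one must check it applies to the two structural sequences above and not merely to isomorphisms of $1$-motives.
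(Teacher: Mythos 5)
Your overall strategy---d\'evissage to the pure case via Lemma \ref{-1dp} and Rosengarten duality, assembled into a ladder and finished by a kernel\le/\le five-lemma argument plus the automatic strictness of continuous maps between profinite groups---is the same as the paper's. But the ladder you describe does not close up, because you apply the d\'evissage on the wrong side of the duality. The theorem concerns $H^{-1}_{\wedge}(k,M^{\vee})$, so the relevant instance of Lemma \ref{mm'} is the one for $M^{\vee}$, namely $0\to H^{-1}_{\wedge}(k,M^{\vee})\to H^{-1}_{\wedge}(k,G^{\vee})\to K(k)^{\wedge}$, whose terms pair respectively against $H^{2}(k,M)$, $H^{2}(k,G)$ (Lemma \ref{-1dp} for the extension $\mathcal E$) and $H^{2}(k,K^{D})$ (Theorem \ref{ros18}(ii)); the bottom row should be the Pontryagin dual of $H^{2}(k,K^{D})\to H^{2}(k,G)\to H^{2}(k,M)\to 0$ from Lemma \ref{dle}. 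You instead take Lemma \ref{mm'} for $M$, whose terms are $H^{-1}_{\wedge}(k,M)$, $H^{-1}_{\wedge}(k,(\widetilde G)^{\vee})=\krn[\e K^{D}(k)^{\wedge}\to A(k)^{\wedge}]$ and $L(k)^{\wedge}$, and try to map them to the dual of the degree-$2$ sequence for $M$. These columns do not match: the identification $G^{\vee}=(\widetilde G)^{\vee}$ you invoke is false ($G^{\vee}=[\e L^{D}\to A^{t}\e]$ while $(\widetilde G)^{\vee}=[\e K^{D}\to A\e]$), so Lemma \ref{-1dp} identifies $H^{-1}_{\wedge}(k,(\widetilde G)^{\vee})$ with $H^{2}(k,\widetilde G)^{*}$, not with $H^{2}(k,G)^{*}$; and the remaining column would pit $L(k)^{\wedge}$ against $H^{2}(k,K^{D})^{*}$, a pairing no part of Theorem \ref{ros18} supplies (your appeal to \ref{ros18}(i) via $K^{D}(k)^{\wedge}\times H^{2}(k,K)$ is the wrong pairing for this slot; what is needed is \ref{ros18}(ii) applied to $K$). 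In addition, the long exact sequence you extract from \eqref{vid} is misordered: the correct degree-$2$ portion is $H^{1}(k,M)\to H^{2}(k,K^{D})\to H^{2}(k,G)\to H^{2}(k,M)\to 0$, not $H^{1}(k,K^{D})\to H^{2}(k,G)\to H^{2}(k,M)\to H^{2}(k,K^{D})\to 0$. As written, the five lemma cannot be applied, and the map you would produce is not the one the theorem asserts.

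The repair uses exactly the ingredients you already have: either run the ladder with Lemma \ref{mm'} for $M^{\vee}$ against the dual of the sequence from Lemma \ref{dle} for $M$ (this is the paper's proof; note that $H^{-1}_{\wedge}(k,M^{\vee})$ then sits in the leftmost column and the conclusion is a map-of-kernels argument rather than a genuine five-lemma), or keep Lemma \ref{mm'} for $M$ but pair it against the dual of $H^{2}(k,L^{D})\to H^{2}(k,\widetilde G)\to H^{2}(k,M^{\vee})\to 0$ coming from \eqref{vid} applied to $M^{\vee}$, and then use biduality $M^{\vee\vee}\simeq M$ to convert the resulting isomorphism $H^{-1}_{\wedge}(k,M)\simeq H^{2}(k,M^{\vee})^{*}$ into the stated one. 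Your final points---that continuity of the resulting map between profinite groups upgrades automatically to a topological isomorphism via Lemma \ref{lem:stc}(i), and that the commutativity of the ladder rests on the functoriality of the pairings \eqref{bpair} applied to the structural sequences---are sound and correspond to what the paper does.
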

\begin{proof} There exists a canonical exact and commutative diagram of abelian groups
\[
\xymatrix{0\ar[r]& H^{\e -1}_{\lbe\wedge\be}\be\lle(k,M^{\lle \vee}\lle)\ar[d]\ar[r]&  H^{\e -1}_{\lbe\wedge\be}\be\lle(k,G^{\le \vee}\le)\ar[d]^{\simeq}\ar[r]&K\lbe(k)^{\wedge}\ar[d]^{\simeq}\\
0\ar[r]&H^{\le 2}\lbe(k,M\le)^{*}\ar[r]&	H^{\le 2}\lbe(k,G\le)^{*}\ar[r]&H^{\le 2}\lbe(k,K^{\be\le D}\lle)^{*},
}
\]
where the top row is the sequence of Lemma \ref{mm'} for $M^{\vee}$ and the bottom row is the Pontraygin dual of the exact sequence (of discrete and torsion abelian groups) $H^{\e 2}(k,K^{\lbe\lle D}\lle)\to H^{\e 2}(k,G\le)\to H^{\e 2}(k,M\le)\to 0$ which is part of the exact sequence of Lemma \ref{dle}\,. The middle (respectively, right-hand) vertical map in the above diagram is a topological isomorphism by Lemma \ref{-1dp} (respectively, Theorem \ref{ros18}(ii)). It follows, as in the proof of Lemma \ref{-1dp}\e, that the left-hand vertical map in the above diagram is a topological isomorphism as well.
\end{proof}

\end{document}